\newcommand{\bl}{\tilde{\lambda}_2}
\newcommand{\tf}{\tilde{\lambda}_1}
\newtheorem{theorem}{Theorem}[section]
\newtheorem{lemma}[theorem]{Lemma}
\newtheorem{proposition}[theorem]{Proposition}
\newtheorem{corollary}[theorem]{Corollary}
\newtheorem{claim}[theorem]{Claim}
\theoremstyle{definition}
\newtheorem{definition}[theorem]{Definition}
\theoremstyle{remark}
\newtheorem{remark}[theorem]{Remark}
\numberwithin{equation}{section}
\title[Variational Properties]{Variational properties of the second eigenvalue of the conformal laplacian}
\author{Matthew J. Gursky}
\address{Department of Mathematics \\
         University of Notre Dame\\
         Notre Dame, IN 46556}
\author{Samuel P\'{e}rez-Ayala}
\address{Department of Mathematics \\
         University of Notre Dame\\
         Notre Dame, IN 46556}
\begin{document}

\begin{abstract}
Let $(M^n,g)$ be a closed Riemannian manifold of dimension $n\ge 3$. Assume $[g]$ is a conformal class for which the Conformal Laplacian $L_g$ has at least two negative eigenvalues. We show the existence of a (generalized) metric that maximizes the second eigenvalue of $L_g$ over all conformal metrics (the first eigenvalue is maximized by the Yamabe metric).  We also show that a maximal metric defines either a nodal solution of the Yamabe equation, or a harmonic map to a sphere.  Moreover, we construct examples of each possibility.
\end{abstract}

\thanks{The first author acknowledges the support of NSF grant DMS-1811034 and DMS-1547292}

\maketitle

\section{Introduction}\label{Introduction}

Let $(M^n,g)$ be a closed Riemannian manifold of dimension $n\ge 3$, and let $L_g$ denote the Conformal Laplacian, defined by
\begin{equation}
L_g := -\Delta_g + c_nR_g.
\end{equation}
Here $c_n := \frac{n-2}{4(n-1)}$, and $\Delta_g$ is the negative Laplace-Beltrami operator. This operator is conformally invariant in the following sense: if $ g_u := u^{\frac{4}{n-2}}g\in [g]$, then for any $\phi\in C^\infty(M^n)$ it holds that
\begin{equation}\label{Conf-Invar}
L_{g_u}(\phi) = u^{-\frac{n+2}{n-2}}L_g(u\phi).
\end{equation}
This formula implies the following conformal invariance of the Dirichlet energy:
\begin{align} \label{cid}
\int_M \phi L_{g_u} \phi \, dv_{g_u} = \int_M (u \phi) \, L_g (u \phi)\, dv_g.
\end{align}
Therefore, the Rayleigh quotient satisfies
\begin{align} \label{Ruu} \begin{split}
\dfrac{ \int_M \phi L_{g_u} \phi \, dv_{g_u} }{ \int_M \phi^2 \, dv_{g_u} } &= \dfrac{ \int_M (u \phi) \, L_g (u \phi)\, dv_g}{ \int_M (u \phi)^2 u^{\frac{4}{n-2}} \, dv_g} \\
&= \dfrac{ \int_M \psi \, L_g \psi \, dv_g}{ \int_M \psi^2 u^{\frac{4}{n-2}} \, dv_g } \\
&=: \mathcal{R}^u_g(\psi),
\end{split}
\end{align}
where $\psi = u \phi$.

Since $M^n$ is compact, the spectrum $\text{Spec}(L_g)$ of $L_g$ is discrete, and we denote it by
\begin{equation}
\lambda_1(L_g) < \lambda_2(L_g)\le \cdots \le \lambda_k(L_g) \nearrow \infty,
\end{equation}
where the eigenvalues are repeated according to their multiplicities. If we fix a choice of a conformal representative $g\in[g]$, then given $g_u = u^{\frac{4}{n-2}}g$ we denote
\begin{align} \label{lug}
\lambda_k(u) := \lambda_k(L_{g_u}).
\end{align}
In view of the conformal invariance of the Rayleigh quotient in (\ref{Ruu}), the min-max characterization of the $k^{th}$-eigenvalue can be expressed as
\begin{align} \label{Lku}
\lambda_k(u) = \inf_{\Sigma_k \subset W^{1,2}(M^n,g) } \sup_{\psi \in \Sigma_k \setminus \{ 0 \}} \mathcal{R}^u_g(\psi),
\end{align}
where $\Sigma_k \subset W^{1,2}:=W^{1,2}(M^n,g)$ denotes a $k$-dimensional subspace of $W^{1,2}$.

The conformal invariance of $L_g$ also implies the conformal invariance of various spectrally-defined quantities:
\begin{enumerate}[(i)]
\item The sign of $\lambda_1(L_g)$ is a conformal invariant (see \cite{Kazdan}), and agrees with the
sign of the Yamabe invariant
\begin{align*}
Y(M^n,[g]) := \inf_{u \in W^{1,2} \setminus \{ 0 \}} \dfrac{ \int_M u \, L_g u \, dv_g }{ \left( \int_M |u|^{\frac{2n}{n-2}} \, dv_g \right)^{\frac{n-2}{n}}}.
\end{align*}

\item The dimension of $\ker L_g$ is a conformal invariant.  This is immediate from (\ref{Conf-Invar}).

\item The number of negative eigenvalues of $L_g$, $\nu([g])$, is also a conformal invariant, and its size is not topologically obstructed (see \cite{Canzani}).
\end{enumerate}

Our main focus in this work will be the existence of an extremal for the normalized eigenvalue functional $F_k$ defined by
\begin{equation}\label{E-Functional}
u  \longmapsto F_k(u) := \lambda_k(u)\left(\int_M u^{\frac{2n}{n-2}}\;dv_g\right)^{\frac{2}{n}}.
\end{equation}
As pointed out in \cite{Ammann}, if the Yamabe invariant $Y(M^n,[g]) \geq 0$, then
\begin{align} \label{infY}
\inf_{ g_u \in [g]} F_1(u) = Y(M^n,[g]).
\end{align}
When $Y(M^n,[g]) < 0$ the same argument shows
\begin{align} \label{supY}
\sup_{ g_u \in [g]} F_1(u) = Y(M^n,[g]).
\end{align}
In particular, by the resolution of the Yamabe problem, a minimizer (respectively, a maximizer) for $F_1(u)$ exists if $Y(M^n,[g]) \geq 0$ (resp., $< 0$).
We will therefore be interested in the variational properties of $F_k$ when $k\geq 2$.

For conformal classes with $Y(M^n,[g]) \ge 0$, Ammann-Humbert \cite{Ammann} called
\begin{align} \label{infY2}
\mu_2(M^n,[g]) := \inf_{ g_u \in [g]} F_2(u)
\end{align}
the {\em second Yamabe invariant}.  Under certain assumptions they were able to prove the existence of a ``generalized metric'' attaining the second Yamabe invariant; i.e., a metric of the form $g_u = u^{\frac{4}{n-2}}g$, with
\begin{align} \label{LNdef}
u \in L^{\frac{2n}{n-2}}_{+}(M^n,g) := \{ u \in L^{\frac{2n}{n-2}}(M^n,g)\, :\, u \geq 0 \ a.e. \}\setminus \{ 0 \}.
\end{align}
As we explain below, the existence of minimizers for $F_2$ is related to the existence of nodal solutions for the Yamabe equation.  We should also point out that in the case of positive Yamabe invariant, the {\em supremum} of $F_k$ is always infinity for any $k\ge1$ (see \cite{Ammann2}).

We will explain in Section \ref{SetUp} how one can define $\lambda_k(u)$ when $u$ is not necessarily smooth and positive -- indeed, $u$ can even vanish on a set of positive measure.  However, when the spectrum of $L_g$ includes negative eigenvalues, to avoid the possibility that $\lambda_k(u) = -\infty$ we will need to be more restrictive.  Therefore, we define
\[
L^{\frac{2n}{n-2}}_{>0}(M^n,g):=\{u\in L^{\frac{2n}{n-2}}_+(M^n,g): u^{-1}(0) \text{ has zero Riemannian measure}\}.
\]

If the Yamabe invariant $Y(M^n,[g]) < 0$ and $\nu([g]) = 1$, then $\lambda_2(u) \geq 0$ for all $g_u \in [g]$. In this case, El Sayed \cite{ElSayed} showed that the approach of Ammann-Humbert still works, and the second Yamabe invariant is attained under certain assumptions.  However, if $\nu([g]) > 1$, then it is not difficult to see that
\begin{align} \label{inf2neg}
\inf_{ g_u \in [g]} F_2(u) = -\infty.
\end{align}
Indeed, (\ref{supY}) suggests that in this setting one should seek to {\em maximize} $F_2$ in $[g]$.  Our main result is that this is always possible, provided $\ker L_g$ is trivial:

\begin{theorem}\label{MainTheorem}
Let $(M^n,g)$ be a closed Riemannian manifold equipped with a conformal class $[g]$  satisfying $\nu([g])>1$ and $0\not \in \text{Spec}(L_g)$. Then there is a $\bar{u} \in L^{\frac{2n}{n-2}}(M)$ that maximizes the normalized eigenvalue functional
\begin{equation}
F_2 : u\in L^{\frac{2n}{n-2}}_{>0}(M^n,g) \longmapsto \lambda_2(u)\left(\int_M u^{\frac{2n}{n-2}}\;dv_g\right)^{\frac{2}{n}}.
\end{equation} 
Also, $\bar u\in C^{\mu_0}(M^n)\cap C^\infty(M^n\setminus\{\bar u=0\})$, $\mu_0 \in (0,1)$.

Moreover, if $\bar{u} \in L^{\frac{2n}{n-2}}_{>0}(M^n,g)$ is any maximizer, then there exists a collection $\{\bar \phi^\alpha\}_{i=1}^k\in C^{2,\mu_0}(M^n)$ of second generalized eigenfunctions (see Section \ref{SetUp})   satisfying
\begin{equation}\label{limitEulerEq}
\bar u^2 - \sum_{\alpha=1}^k \bar (\phi^\alpha)^2 =0.
\end{equation}
Here $1 \leq k \leq \dim E_2(\bar{u})$, where $E_2(\bar{u})$ is the space of generalized eigenfunctions corresponding to $\lambda_2(\bar{u})$.
\end{theorem}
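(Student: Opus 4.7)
The plan is to carry out a maximizing-sequence argument adapted from the minimization strategies of Ammann--Humbert \cite{Ammann} and El Sayed \cite{ElSayed}, and then to extract the structural identity (\ref{limitEulerEq}) via a Nadirashvili-type first variation. By scale invariance $F_2(cu)=F_2(u)$ one may maximize $\lambda_2(u)$ over the slice $\mathcal{S}:=\{u\in L^{2n/(n-2)}_+: \int u^{2n/(n-2)}\,dv_g=1\}$. Because $\nu([g])$ is conformally invariant and exceeds $1$, $\lambda_2(u)<0$ on $\mathcal{S}$, so $\Lambda:=\sup F_2\le 0$; taking $u$ to be the background representative gives $\Lambda>-\infty$. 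Pick a maximizing sequence $u_j\in\mathcal{S}$ with $\lambda_2(u_j)\to\Lambda$ and extract a weakly convergent subsequence $u_j\rightharpoonup\bar u$ in $L^{2n/(n-2)}$, with $\bar u\ge 0$ and $\|\bar u\|_{L^{2n/(n-2)}}\le 1$.

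The crux of the argument is (i) upper semicontinuity of $\lambda_2$ along this weak convergence and (ii) nontriviality of the weak limit $\bar u$. For (i), apply the min--max formula (\ref{Lku}) with a 2-dimensional subspace $\Sigma\subset W^{1,2}$ that nearly achieves $\lambda_2(\bar u)$. The numerator of $\mathcal{R}^u_g(\psi)$ is independent of $u$, while the boundedness of $u_j^{4/(n-2)}$ in $L^{n/2}$ together with the Sobolev embedding $W^{1,2}\hookrightarrow L^{2n/(n-2)}$ (so that $\psi^2\in L^{n/(n-2)}$) allows one to pass to the limit in the denominator $\int\psi^2 u_j^{4/(n-2)}\,dv_g$ modulo a possibly atomic defect supported at finitely many points. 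The hypothesis $0\notin\mathrm{Spec}(L_g)$ is precisely what is needed to rule out these atoms: a blow-up/rescaling around a concentration point together with the resulting limiting eigenfunction in $W^{1,2}$ for $L_g$ with eigenvalue $0$ is incompatible with the spectral assumption, and also with $\Lambda<0$. This nonconcentration step is the main technical obstacle, since eigenvalue data can jump discontinuously once the weighting measure develops atoms.

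Once $\bar u\in\mathcal{S}$ is known to be a maximizer, the Euler--Lagrange derivation proceeds along classical lines. Along an admissible perturbation $\bar u+tv$ with $v\in L^{2n/(n-2)}$, $\bar u+tv\ge 0$, and $\int \bar u^{(n+2)/(n-2)}v\,dv_g=0$ (volume constraint to first order), the one-sided directional derivative of $\lambda_2$ is controlled by the standard perturbation formula,
\begin{equation*}
\frac{d}{dt}\bigg|_{t=0^+}\lambda_2(\bar u+tv) \le \sup_{\phi\in E_2(\bar u),\,\|\phi\|=1}\left(-\lambda_2(\bar u)\,\tfrac{4}{n-2}\int \phi^2\,\bar u^{(6-n)/(n-2)}v\,dv_g\right),
\end{equation*}
together with the dual lower bound using $\inf$. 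Maximality at $\bar u$ forces a separation condition on the convex hull of $\{\phi^2:\phi\in E_2(\bar u)\}$; a Hahn--Banach/Farkas argument then produces $\bar\phi_1,\ldots,\bar\phi_k\in E_2(\bar u)$ with $1\le k\le \dim E_2(\bar u)$ and $\sum_i\bar\phi_i^2=\bar u^2$, which is exactly (\ref{limitEulerEq}). Elliptic bootstrapping on the system $L_g\bar\phi_i=\lambda_2(\bar u)\,\bar u^{4/(n-2)}\bar\phi_i$, using $\bar u^{4/(n-2)}\in L^{n/2}$, gives $\bar\phi_i\in C^{2,\alpha}$, and (\ref{limitEulerEq}) then yields $\bar u\in C^\alpha(M)\cap C^\infty(M\setminus\{\bar u=0\})$.
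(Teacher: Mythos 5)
Your proposal takes a genuinely different route from the paper: a direct maximizing-sequence argument in place of the paper's $\epsilon$-regularization scheme. The paper replaces $F_2$ by $F_{2,\epsilon}(u)=\lambda_2(u)\big(\int u^N\big)^{(N-2)/N}-\big(\int u^{-\epsilon}\big)\big(\int u^N\big)^{\epsilon/N}$, proves existence of maximizers $u_\epsilon$ for that functional, derives the Euler--Lagrange identity $\gamma_{1,\epsilon}u_\epsilon^N-u_\epsilon^{N-2}\sum c_{i,\epsilon}\phi_{i,\epsilon}^2-\gamma_{2,\epsilon}u_\epsilon^{-\epsilon}=0$, extracts $\epsilon$-independent $C^{1}$ and $C^{1,\alpha}$ bounds from it, and only then passes $\epsilon\to 0$ with strong H\"older convergence. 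This is not a mere presentational choice; it is there to repair exactly the two spots where your plan breaks.

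The first gap is that a weak $L^N$ limit of a maximizing sequence can vanish on a set of positive measure; in that case $\lambda_1(\bar u)=-\infty$ (this is El Sayed's characterization that the paper cites via Lemma~\ref{finiteLemma}), generalized second eigenfunctions need not exist, and the first-variation/Hahn--Banach step cannot even be set up. Your nonconcentration discussion addresses a different pathology (point concentration and loss of $L^N$ mass) and does not touch this one. The role of the penalty $\int u^{-\epsilon}\,dv_g$ in the paper is precisely to force $u_\epsilon\in\mathcal{D}_\epsilon$ at every stage and then to give the quantitative lower bound $\operatorname{ess\,inf}u_\epsilon\ge(C\epsilon)^{1/(\epsilon+N)}$ (Lemma~\ref{u-lower}), which is what makes the eventual limit analysis possible; nothing in your scheme plays that role. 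The second gap is the claimed "upper semicontinuity of $\lambda_2$" along weak $L^N$ convergence: weak convergence $u_j\rightharpoonup\bar u$ in $L^N$ does not give $u_j^{N-2}\rightharpoonup\bar u^{N-2}$ in $L^{n/2}$ (the map $t\mapsto t^{N-2}$ is not linear, and convexity of $t^{N-2}$ fails for $n\ge 6$), so the denominators $\int\psi^2u_j^{N-2}\,dv_g$ do not converge to $\int\psi^2\bar u^{N-2}\,dv_g$ even modulo atoms; and $\lambda_2$ is an $\inf$ over subspaces of a $\sup$, so the naive passage to the limit in a single test subspace is not enough. The paper circumvents this by never taking this weak limit directly for $F_2$ itself: for $F_{2,\epsilon}$ the lower semicontinuity of the penalty plus $W^{1,2}$-compactness of the associated eigenfunctions (Lemma~\ref{conv-eigen}, using $0\notin\text{Spec}(L_g)$ there, not to rule out atomic concentration of $u_j$) is what drives the existence proof (Proposition~\ref{RegEx}). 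Finally, the paper needs an extra contradiction argument in Section~\ref{TakingLimit} to confirm the $\epsilon\to 0$ limit is genuinely a maximizer of $F_2$; your proposal has no analogue of this, because it assumes the maximizer is found directly. The Hahn--Banach Euler--Lagrange step and the regularity bootstrap in the second half of your proposal do track the paper's Propositions~\ref{OSD}, \ref{OSDProp}, \ref{EulerProp} and Section~\ref{Estimates} in spirit, and would be fine once the existence issue is resolved.
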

 
 As a consequence of Theorem \ref{MainTheorem}, in each conformal class satisfying the assumptions of the theorem, there is either a nodal solution of the Yamabe equation, or a harmonic map into a sphere, depending on the integer $k$ in the conclusion:

\begin{corollary}\label{NodalHarmonic}
Let $\bar u\in C^{\mu_0}(M^n)\cap C^\infty(M^n\setminus\{\bar u=0\})$ be a  maximal function provided by Theorem \ref{MainTheorem}. We have the following two cases:
\begin{enumerate}
\item If $k=1$, then $\bar u=|\bar \phi|$ on $M^n$, and $\bar \phi$ is a nodal solution of
\begin{equation}
L_g\bar \phi = \lambda_2(\bar u)  |\bar \phi|^{\frac{4}{n-2}} \, \bar \phi.
\end{equation}
\item If $k>1$, then the map
\begin{equation}\label{Harmonic}
\bar U:=(\bar \phi^1/\bar u,\cdots, \bar \phi^k/\bar u): (M^n\setminus\{\bar u=0\}, \bar u^{\frac{4}{n-2}}g) \longrightarrow (\mathbb{S}^{k-1},g_{\text{round}})
\end{equation}
 defines a harmonic map.
\end{enumerate}
\end{corollary}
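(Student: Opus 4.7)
The plan is to deduce both cases directly from the eigenvalue equation satisfied by each $\bar{\phi}_i$ together with the pointwise identity \eqref{limitEulerEq}. By Theorem \ref{MainTheorem}, each second generalized eigenfunction $\bar{\phi}_i \in C^{2,\alpha}(M^n)$ solves, weakly on $M^n$ and classically on the open set $\{\bar{u} > 0\}$, the linear eigenvalue equation
\[
L_g \bar{\phi}_i = \lambda_2(\bar{u})\, \bar{u}^{\frac{4}{n-2}}\, \bar{\phi}_i.
\]
Both cases of the corollary will follow from manipulating this linear equation under the constraint \eqref{limitEulerEq}.

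For $k=1$, I would observe that \eqref{limitEulerEq} reads $\bar{u}^2 = \bar{\phi}_1^2$, so $\bar{u} = |\bar{\phi}_1|$ since $\bar{u} \ge 0$; substituting $\bar{u}^{\frac{4}{n-2}} = |\bar{\phi}_1|^{\frac{4}{n-2}}$ into the eigenvalue equation gives the asserted nodal Yamabe equation. To confirm that $\bar{\phi}_1$ is genuinely sign-changing, I would invoke the fact that an eigenfunction corresponding to $\lambda_2(\bar{u}) > \lambda_1(\bar{u})$ is $L^2(\bar{u}^{\frac{4}{n-2}} dv_g)$-orthogonal to the first eigenspace, which is spanned by a strictly positive function, and therefore must change sign.

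For $k > 1$, I would proceed in three steps. First, \eqref{limitEulerEq} shows $\sum_i (\bar{\phi}_i/\bar{u})^2 \equiv 1$ on $\{\bar{u} > 0\}$, so $\bar{U}$ is well-defined and lands in $\mathbb{S}^{k-1}$. Second, setting $\psi_i := \bar{\phi}_i/\bar{u}$ and applying the conformal invariance \eqref{Conf-Invar} with $u = \bar{u}$ and $\phi = \psi_i$, the eigenvalue equation for $\bar{\phi}_i = \bar{u}\, \psi_i$ transforms, on $\{\bar{u} > 0\}$, into
\[
-\Delta_{g_{\bar{u}}} \psi_i + c_n R_{g_{\bar{u}}} \psi_i = \lambda_2(\bar{u})\, \psi_i, \qquad i = 1, \dots, k.
\]
Third, applying $\Delta_{g_{\bar{u}}}$ to the constraint $\sum_i \psi_i^2 \equiv 1$ yields $\sum_i \psi_i \Delta_{g_{\bar{u}}} \psi_i = -|\nabla \bar{U}|^2_{g_{\bar{u}}}$; multiplying the previous equation by $\psi_i$ and summing over $i$ then gives
\[
|\nabla \bar{U}|^2_{g_{\bar{u}}} \;=\; \lambda_2(\bar{u}) - c_n R_{g_{\bar{u}}}.
\]
Substituting back yields $-\Delta_{g_{\bar{u}}} \psi_i = |\nabla \bar{U}|^2_{g_{\bar{u}}}\, \psi_i$ for each $i$, which is exactly the extrinsic harmonic map equation for $\bar{U}$ into the round sphere $\mathbb{S}^{k-1} \subset \mathbb{R}^k$.

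The main technical content is already absorbed into Theorem \ref{MainTheorem}, which supplies both the eigenvalue equation and the $C^{2,\alpha}$ regularity required to carry out the computations above pointwise on $\{\bar{u} > 0\}$. The one point I would treat with some care is the interface with the nodal set $\{\bar{u} = 0\}$: both the conformal factor $\bar{u}^{-1}$ and the conformal metric $g_{\bar{u}}$ degenerate there, so the harmonic map statement is intrinsically one on the open manifold $(M^n \setminus \{\bar{u} = 0\}, \bar{u}^{\frac{4}{n-2}} g)$, consistent with how the corollary is phrased; no extension across the nodal set is claimed or needed.
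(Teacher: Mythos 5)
Your proposal is correct and follows essentially the same route as the paper: for $k=1$ it reads $\bar u=|\bar\phi|$ off the Euler identity and substitutes into the eigenvalue equation (with the sign change coming from positivity of the first eigenfunction plus orthogonality, exactly the paper's reasoning), and for $k>1$ it uses conformal invariance to get $L_{g_{\bar u}}\psi_i=\lambda_2(\bar u)\psi_i$, applies the Laplacian to the constraint $\sum_i\psi_i^2\equiv 1$ to identify $\sum_i|\nabla_{g_{\bar u}}\psi_i|^2=\lambda_2(\bar u)-c_nR_{g_{\bar u}}$, and substitutes back to obtain the harmonic map equation $-\Delta_{g_{\bar u}}\psi_j=\bigl(\sum_i|\nabla_{g_{\bar u}}\psi_i|^2\bigr)\psi_j$ on $M^n\setminus\{\bar u=0\}$. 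The only cosmetic difference is that the paper carries the convex-combination coefficients $\bar c_i$ explicitly in defining $\psi_i$, which you absorb into the $\bar\phi_i$ as in the statement of Theorem \ref{MainTheorem}; the argument is otherwise identical.
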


As we now explain, it is possible to construct examples for both cases.  For the first case, observe that by definition
\begin{align*}
1 \leq k \leq \nu([g]).
\end{align*}
In particular, if $\nu([g]) =2$, then $k=1$ (since $\lambda_1(\bar u)$ must be simple by Lemma \ref{efProp}).  Moreover, Corollary \ref{NodalHarmonic} implies that any maximal function $\bar u$ of the functional $F_2$ on $L^{\frac{2n}{n-2}}_{>0}(M^n,g)$ is of the form $\bar u = |\bar \phi|$ for $\bar \phi\in E_2(\bar u)$.  Since $\bar \phi$ changes sign, this means that $\bar u$ cannot be strictly positive and consequently there is no bona fide Riemannian metric in $[g]$ maximizing $F_2$.

By contrast, in Section \ref{Example} we will construct explicit examples for which $k \ge 2$ and the maximal metric induces a harmonic map:

\begin{theorem} \label{IntroExample}  Let $(H,h)$ be a closed Riemannian manifold with constant negative scalar curvature, suitably normalized (see Section \ref{Example}).  Then the product metric $(M,g) = (H \times S^1, h + d\theta^2)$ is maximal in its conformal class.  In particular, eigenfunctions $\{ \psi_1, \psi_2 \}$ for the Laplacian on the $S^1$-factor are eigenfunctions for $\lambda_2(L_g)$, and define a harmonic map
\begin{align*}
\Psi = (\psi_1, \psi_2) : M \rightarrow S^1,
\end{align*}
given by projection onto the $S^1$-factor.
\end{theorem}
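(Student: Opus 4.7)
The plan is to decompose $\mathrm{Spec}(L_g)$ on the product by separation of variables, choose the normalizations of $h$ and of the $S^1$-factor so that Theorem~\ref{MainTheorem} applies and so that the second eigenspace of $L_g$ is spanned by the pair $\{\psi_1,\psi_2\}$ of first nonconstant $S^1$-eigenfunctions, and then show that $\bar u\equiv 1$ attains $\sup F_2$ by a test-subspace estimate in the min-max formula \eqref{Lku}.

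Since $R_g=R_h$ is constant and $\Delta_g=\Delta_h+\partial_\theta^2$, the operator $L_g$ separates. Writing $S^1=\mathbb{R}/L\mathbb{Z}$, the spectrum consists of sums $\mu_i+(2\pi k/L)^2$ with $\mu_i\in\mathrm{Spec}(L_h)$ and $k\in\mathbb{Z}_{\ge 0}$, so $\lambda_1(L_g)=c_nR_h$ is simple with constant eigenfunction, while $c_nR_h+(2\pi/L)^2$ has multiplicity at least two, with eigenspace $\mathrm{span}\{\cos(2\pi\theta/L),\sin(2\pi\theta/L)\}=\mathrm{span}\{\psi_1,\psi_2\}$. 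The ``suitable normalization'' is to rescale $h$ (keeping constant negative scalar curvature) and to choose $L$ so that $(2\pi/L)^2$ lies strictly below both $-c_nR_h$ and the first nonzero Laplace eigenvalue of $(H,h)$, with $L$ generic enough that $0\not\in\mathrm{Spec}(L_g)$. Then $\lambda_2(L_g)=c_nR_h+(2\pi/L)^2<0$, $\nu([g])\ge 2$, and Theorem~\ref{MainTheorem} applies. Moreover, after rescaling $\psi_1,\psi_2$ so that $\psi_1^2+\psi_2^2\equiv 1$, the Euler--Lagrange relation \eqref{limitEulerEq} at $\bar u\equiv 1,\bar\phi_i=\psi_i$ reduces to the trivial identity $\cos^2+\sin^2=1$, so case~(2) of Corollary~\ref{NodalHarmonic} is in force and the associated harmonic map $(\psi_1,\psi_2):M\to S^1$ is the projection onto the $S^1$-factor (harmonic because it is a Riemannian submersion with totally geodesic fibers).

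The main obstacle is showing that $\bar u\equiv 1$ is not merely a critical point but a \emph{maximum} of $F_2$. Writing $V(u):=\int_M u^{2n/(n-2)}dv_g$ so that $F_2(u)=\lambda_2(u)V(u)^{2/n}$, for arbitrary $u\in L^{2n/(n-2)}_+(M,g)$ I would test \eqref{Lku} with the fixed two-dimensional subspace $\Sigma_2=\mathrm{span}\{\psi_1,\psi_2\}$. Since $\psi_1,\psi_2$ are $L^2(dv_g)$-orthogonal with $L_g\psi_i=\lambda_2(1)\psi_i$ and $\lambda_2(1)<0$, setting $A(u)_{ij}=\int_M\psi_i\psi_j\,u^{4/(n-2)}\,dv_g$ and using that the supremum of a negative ratio is realized at the minimum of its denominator yields
\[
\lambda_2(u)\ \le\ \sup_{\phi\in\Sigma_2\setminus\{0\}}\mathcal{R}^u_g(\phi)\ =\ \frac{\lambda_2(1)\,\|\psi_1\|_{L^2(g)}^2}{\lambda_{\min}\bigl(A(u)\bigr)}.
\]
The identity $\psi_1^2+\psi_2^2\equiv 1$ gives $\mathrm{tr}\,A(u)=\int_M u^{4/(n-2)}dv_g$, H\"older's inequality bounds this by $V(u)^{2/n}\,\mathrm{Vol}(M,g)^{(n-2)/n}$, and the elementary $2\times 2$ inequality $\lambda_{\min}(A(u))\le\tfrac12\mathrm{tr}\,A(u)$ together with the identity $2\|\psi_1\|_{L^2(g)}^2=\mathrm{Vol}(M,g)$ combine to yield
\[
F_2(u)=\lambda_2(u)\,V(u)^{2/n}\ \le\ \lambda_2(1)\,\mathrm{Vol}(M,g)^{2/n}=F_2(1),
\]
with equality throughout at $u\equiv 1$. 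Each step is sharp at the product metric, which is what makes this chain (min-max test subspace, H\"older at the CSC representative, and the trace bound for the $2\times 2$ matrix) deliver a clean equality and hence maximality.
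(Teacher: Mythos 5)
Your setup (normalizing $h$ and the circle length so that $\lambda_2(L_g)=c_nR_H+\lambda_1(-\Delta_{S^1})<0$ with eigenspace spanned by $\psi_1,\psi_2$) and your identification of the harmonic map agree with the paper. The gap is in the maximality argument, which is the heart of the theorem. With the test subspace $\Sigma_2=\mathrm{span}\{\psi_1,\psi_2\}$, the Rayleigh quotient of $a\psi_1+b\psi_2$ is $\lambda_2(1)\,\|\psi_1\|_{L^2(g)}^2\,|v|^2/(v^{T}A(u)v)$ with $v=(a,b)$, and since the numerator is \emph{negative}, the supremum over $\Sigma_2$ is attained where the denominator is \emph{largest}, i.e.
\begin{equation*}
\sup_{\phi\in\Sigma_2\setminus\{0\}}\mathcal{R}^u_g(\phi)=\frac{\lambda_2(1)\,\|\psi_1\|_{L^2(g)}^2}{\lambda_{\max}\bigl(A(u)\bigr)},
\end{equation*}
not $\lambda_{\min}(A(u))$ as you wrote; your justification (``the supremum of a negative ratio is realized at the minimum of its denominator'') is exactly backwards. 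Once corrected, your chain no longer closes: you would need $\lambda_{\max}(A(u))\le\tfrac12\operatorname{tr}A(u)=\tfrac12\int u^{N-2}dv_g$, whereas in fact $\lambda_{\max}(A(u))\ge\tfrac12\operatorname{tr}A(u)$, with equality only when $A(u)$ is a multiple of the identity. So the estimate you obtain is only $F_2(u)\le\tfrac12\,F_2(1)$ in the worst case (recall $F_2(1)<0$, so this is weaker than $F_2(u)\le F_2(1)$), and the loss is genuine: if $u^{N-2}$ concentrates near one point of the circle factor, the matrix $A(u)$ becomes strongly anisotropic and the bound from the purely trigonometric subspace degrades by a factor approaching $2$. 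A Hersch-type fix (choosing the combination of $\psi_1,\psi_2$ orthogonal in $L^2(u^{N-2}dv_g)$ to the first generalized eigenfunction of $u$) runs into the same factor, so this is not a cosmetic slip but a failure of the approach.

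This is precisely why the paper does not test with $\mathrm{span}\{\psi_1,\psi_2\}$. It relabels so that $\int\psi_1^2u^{N-2}dv_g\le\tfrac12\int u^{N-2}dv_g$, reduces matters to the key inequality \eqref{key}, and proves \eqref{key} by testing the min–max with the \emph{mixed} subspace $\mathrm{span}\{\psi_1,\phi_2\}$, where $\phi_2$ is the actual second generalized eigenfunction of the competitor $u$ (so that $E_g(\phi_2)=\tilde\lambda_2$ and the cross term is $\tilde\lambda_2\alpha$ with $\alpha=\int\psi_1\phi_2u^{N-2}dv_g$), then maximizes the resulting quotient over the mixing angle and analyzes the critical points in two cases, including a separate lemma for the degenerate case where the supremum over the subspace equals $\tilde\lambda_2$. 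If you want to salvage your route, you must incorporate information adapted to $u$ (its eigenfunctions) into the test subspace; the symmetric subspace of circle modes alone cannot yield \eqref{goal2}.
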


\medskip

\noindent {\bf Remarks.}  \bigskip \begin{enumerate}

\item In the work of Ammann-Humbert, if $g_{\underline u} \in [g]$ is a (generalized) metric that minimizes $F_2$, then $\lambda_2(\underline{u})$ is always simple\footnote{The simplicity of a minimizer in this case can also be shown by a first variation argument; see Remark \ref{Simplicity}.} (see Theorem 3.4 of \cite{Ammann}).  When $k > 1$ in Corollary \ref{NodalHarmonic}, then $\lambda_2(\bar u)$ is not necessarily simple; for example, the multiplicity of $\lambda_2(L_g)$ for the product metric $g$ on $H \times S^1$ is two.   \medskip

\item In fact, it is easy to adapt the proof of Theorem \ref{IntroExample} to construct examples with $k$ arbitrarily large, by taking products with spheres of high enough dimension. \medskip

\item Theorem \ref{IntroExample} also shows that maximal metrics can be smooth.  This is another significant contrast to the problem of minimizing $F_2$: in the work of Ammann-Humbert, if $g_{\underline u} \in [g]$ is a (generalized) metric minimizing $F_2$, then $\underline{u} = |\phi|$, where $\phi$ is a nodal solution of the Yamabe equation.  In particular, $g_{\underline u}$ is never a smooth Riemannian metric.   \medskip

\item Another surprising aspect of Theorem \ref{IntroExample} is that the product metric is a Yamabe metric, hence (as we observed above) is simultaneously maximal for the first normalized eigenvalue functional.  This is remarkably different from the case of the Laplace operator on surfaces, where it is known that metrics cannot maximize consecutive eigenvalues (\cite{ElSoufi2}).  \medskip

\end{enumerate}

\medskip 

It would be interesting to explore what happens for higher order eigenvalues, assuming they are negative, that is, for $\lambda_3(L_g)\le \cdots\le \lambda_{\nu([g])}(L_g)$ for a conformal class with $\nu([g])\ge 3$. However, in the case of surfaces, a bubbling phenomena was first noticed by Nadirashvilli in \cite{Nadirashvilli2}. This complicates the existence theory for maximal metrics of higher order eigenvalues on closed surfaces, and a spectral gap assumption is necessary (see \cite{Nadirashvilli4, Petrides2}).

On surfaces, the connection of maximal eigenvalues to harmonic maps is well known (see \cite{Nadirashvilli}, \cite{ElSoufi}).  If $\Sigma$ is a closed Riemannian surface, then Petrides \cite{Petrides} and Nadirashvilli-Sire \cite{Nadirashvilli3} have shown that for any conformal class $[g]$ on $\Sigma$, there exists a metric $\bar g \in [g]$, which is smooth except possibly at finitely many points, such that
\begin{equation}\label{ConformalEigenvalue}
\lambda_1(-\Delta_{\bar{g}}) \, \text{Area}(\Sigma,\bar{g}) = \Lambda_1(\Sigma,[g]) := \sup_{\tilde g \in [g]} \lambda_1(-\Delta_{\tilde g}) \, \text{Area}(\Sigma, \tilde g)
\end{equation}
(see also work by Kokarev in \cite{Kokarev}). In addition, to this maximal metric there is an associated harmonic map into a higher dimensional sphere (\cite{Fraser}).  In a similar spirit, Fraser-Schoen have shown the connection between extremal properties of Steklov eigenvalues on surfaces with boundary and free boundary minimal surfaces. To our knowledge, the relation between extremal properties of the spectrum of the Conformal Laplacian and harmonic maps is new.

The outline of the paper is as follows. In Section \ref{SetUp} we make the necessary definitions to define the maximization problem.  In particular, we define the $k^{th}$-eigenvalue associated to a function $u$ in $L^{\frac{2n}{n-2}}_+(M^n,g)$. Also, we discuss key compactness results for eigenfunctions. Section \ref{VariationFormulas} is devoted to the derivation of the first variation formulas for the normalized eigenvalue functional $F_2$. The main result we prove is that along certain deformations $u_t$ in $L^{\frac{2n}{n-2}}_{>0}(M^n,g)$, both of the one-sided derivatives of $F_2(u_t)$ exist at $t=0$. The techniques we use in this section follow closely those employed by Kokarev in \cite{Kokarev}. In Section \ref{RegularizedFunctional} we introduce a regularized version of the problem.  The main idea behind the regularization is that we can control the size of the zero set of extremal functions. We prove existence of maximizers for the regularized functional and derive an associated Euler Lagrange Equation via classical separation theorems as it was done in \cite{ElSoufi}, \cite{ElSoufi2} for Laplace eigenvalues, and in \cite{Fraser} for Laplace and Steklov eigenvalues.  In Section \ref{Estimates} we obtain uniform estimates for the sequence of maximizers, and in Section \ref{TakingLimit} we prove Theorem \ref{MainTheorem} and Corollary \ref{NodalHarmonic}. Finally, in Section \ref{Example} we provide an example showing that the second case in Corollary \ref{NodalHarmonic} ($k>1$) occurs.

\vskip.2in
\section{Preliminaries} \label{SetUp}

Assume we are given a conformal class $[g]$ for which $\nu([g])>1$ and $0\not \in \text{Spec}(L_g)$. As pointed out in Section \ref{Introduction}, these assumptions are conformally invariant. Since $\nu([g])>1$ implies that $Y(M^n,[g])<0$, we are allowed to select as reference metric some $g\in [g]$ for which $R_g<0$ everywhere on $M$. From now on these are the assumptions that we will be working with.

Some notation and definitions are necessary. For $p \neq 0$, let us denote
\begin{align*}
L^p_+:= L^p_{+}(M^n,g) = \{ u \in L^p(M^n,g)\, :\, u \geq 0 \ a.e.\}\setminus \{0\}.
\end{align*}
The collection of functions in $L^p_+$ with the property that its zero locus has zero Riemannian measure will be denoted by $L^p_{\tiny>0}$:
\begin{equation}
L^p_{>0}:=\{u\in L^p_+: u^{-1}(0) \text{ has zero Riemannian measure}\}
\end{equation}
We denote by $N:= 2^*=\frac{2n}{n-2}$ the critical Sobolev exponent. Given $u \in C^{\infty}(M^n)$ with $u>0$, $g_u$ denotes the conformal metric $g_u = u^{N-2}g \in [g]$.  As discussed in Section \ref{Introduction}, by conformal invariance and the min-max characterization, we have
\begin{align} \label{Lku}
\lambda_k(u):=\lambda_k(L_{g_u}) = \inf_{\Sigma_k \subset W^{1,2} } \sup_{\psi \in \Sigma_k \setminus \{ 0 \}} \mathcal{R}^u_g(\psi).
\end{align}

This characterization motivates the definition of the $k^{th}$-eigenvalue $\lambda_k(u)$ associated to an arbitrary function $u\in L^{N}_{+}$. However, since the zero set of a function $u\in L^{N}_{+}$ could be large, there is the possibility of having a $k$-dimensional subspace spanned by functions which are linearly independent only on $\{u=0\}$. To rule out this scenario we define $\lambda_k(u)$ via (\ref{Lku}), but we take the infimum over the {\em $k^{th}$ modified Grassmannian}, $Gr_k^u(W^{1,2}):= Gr_k^u(W^{1,2}(M^n,g))$.  More precisely, $\langle \phi_1,\cdots, \phi_k \rangle \in Gr_k^u(W^{1,2})$ if and only if $\{\phi_1,\cdots, \phi_k\}\subset W^{1,2}$ and the functions $\phi_1 u^{\frac{N-2}{2}},\cdots, \phi_k u^{\frac{N-2}{2}}$ are linearly independent (see Section 2.2. in \cite{Ammann}).

\begin{definition}\label{GeneralizedEigenvalue}
For $u\in L^{N}_{+}$, the {\em $k^{th}$-generalized eigenvalue} $\lambda_k(u)$ is defined via
\begin{equation}\label{Lkudef}
\lambda_k(u):=\inf_{\Sigma_k \in Gr_k^u(W^{1,2}) } \sup_{\psi \in \Sigma_k \setminus \{ 0 \}} \mathcal{R}^u_g(\psi)
\end{equation}
The function $u \in L^N_{+}$ is called a {\em generalized conformal factor}, and $g_u=u^{N-2}g$ is called a {\em generalized conformal metric}.
\end{definition}

\noindent {\bf Remarks.}  \begin{enumerate}  \medskip

\item Heuristically, the reason for introducing the spaces $Gr_k^u(W^{1,2})$ is that we are only taking into account what happens on regions where the generalized conformal factor $u \neq 0$.  \medskip

\item The minimax characterization is often used for proving eigenvalue comparison results since the subspaces are independent of any choice of metric.  This is not the case in (\ref{Lkudef}), where the subspaces depend on $u$.  However, we do have the following elementary observation: \medskip

\end{enumerate}

\begin{lemma}\label{TestFunctions}
Let $u,w\in L^N_+$ be generalized conformal factors. If $\{x\in M: u(x) \neq 0\}\subseteq \{x\in M: w(x) \neq 0\}$, then $Gr_k^u(W^{1,2}) \subseteq Gr_k^w(W^{1,2})$.
\end{lemma}

\begin{proof}
It is enough to notice that if the collection $\{\phi_1u^{\frac{N-2}{2}}, \cdots, \phi_ku^{\frac{N-2}{2}}\}$ is linearly independent, then so is $\{\phi_1w^{\frac{N-2}{2}}, \cdots, \phi_kw^{\frac{N-2}{2}}\}$.
\end{proof}

It was observed by El Sayed (see Proposition 2.1 of \cite{ElSayed}) that when the Yamabe invariant $Y(M^m,[g]) < 0$, then it is always possible to construct a generalized conformal factor $u \in L^N_{+}$ such that $\lambda_1(u) = -\infty$.  The following result from the same paper explains why we restrict our conformal factors to $L^N_{>0}$:

\begin{proposition} \label{Prop22}  (Proposition 2.2 of \cite{ElSayed})  If $u \in L^N_{>0}$, then $\lambda_1(u) > -\infty$.
\end{proposition}

We now turn to eigenfunctions.  We say that  $\phi_k \in W^{1,2}(M)$ is a {\em $k^{th}$-generalized eigenfunction} if it is a (non-trivial)  solution of the equation
\begin{equation} \label{genev}
L_g\phi_k = \lambda_k(u)\phi_ku^{N-2}.
\end{equation}
We let $E_k(u)$ denote the space of all $k^{th}$-generalized eigenfunctions.

The following result has significant overlap with Proposition 2.4 in \cite{ElSayed}.  However, some parts of the proof of Proposition 2.4 were either omitted or only sketched, so we provide additional details:

 \begin{proposition} \label{efProp} Assume $\nu([g]) \geq 2$, and let $u \in L^N_{>0}$.  \medskip

\noindent $(i)$  There is a first generalized eigenfunction, i.e., a $W^{1,2}$-solution of
\begin{equation} \label{primo}
L_g\phi_1 = \lambda_1(u)\phi_1 u^{N-2}.
\end{equation}
Moreover, if $\phi \in E_1(u)$ is non-trivial then $\phi \geq 0$ or $\phi \leq 0$, and $\phi^{-1}(0)$ has measure zero. \medskip

\noindent $(ii)$ $\dim E_1(u) = 1$.   \medskip

\noindent $(iii)$  $\lambda_2(u)$ is given by
\begin{equation}\label{Char-e}
 \lambda_2(u) = \inf \mathcal{R}_g^u(\phi),
 \end{equation}
 where the infimum is taken over all functions $\phi\in W^{1,2}$ such that
 \begin{align*}
 \int_M \phi \psi_1 u^{N-2}\;dv_g = 0
 \end{align*}
 for each $\psi_1 \in E_1(u)$.  \medskip

 \noindent $(iv)$  There is a basis $\{ \phi_2^{(1)}, \dots, \phi_2^{(m)} \}$ of $E_2(u)$ satisfying
 \begin{align} \label{ortho2}
 \int_M \phi_2^{(i)} \, \phi_2^{(j)} \, u^{N-2} \, dv_g = 0 \ \ \mbox{if $i \neq j$},
 \end{align}
 and for any $\phi_1 \in E_1(u)$
 \begin{align} \label{ortho1}
 \int_M \phi_2^{(i)} \, \phi_1 \, u^{N-2}\, dv_g = 0, \ \ 1 \leq i \leq m.
 \end{align}
 In particular, there is a $\gamma = \gamma(u) > 0$ such that
 \begin{align} \label{sgap}
 \lambda_2(u) \geq \lambda_1(u) + \gamma.
 \end{align}
 \end{proposition}

\begin{proof}  $(i)$ The existence of a (non-trivial) first eigenfunction is proved in Proposition 2.4 of \cite{ElSayed}.  Let $\psi \in E_1(u)$,
and $\psi^{+}(x) = \max \{ \psi(x), 0\}$ denote the positive part of $\psi$.  We claim that $\psi^{+} \in E_1(u)$ (possibly $\psi^{+} \equiv 0$).  To see this, we use the fact that $\psi$ is a $W^{1,2}$-solution of
\begin{align} \label{E1def}
L_g \psi = \lambda_1(u) \psi \bar{u}^{N-2}.
\end{align}
Using $\psi^{+}$ as a test function, we easily find that $\mathcal{R}^u_g(\psi^{+}) = \lambda_1(u)$.  By the variational characterization of $\lambda_1(u)$, it follows that $\psi^{+} \in E_1(u)$.  Replacing $\psi$ with $-\psi$, we see that either $\psi \geq 0 \, a.e.$ or $\psi \leq 0 \, a.e.$

Suppose $\psi \in E_1(u)$ with $\psi \geq 0 \ a.e.$, $\psi \not\equiv 0$.  We want to show that $\psi^{-1}(0)$ has measure zero.  We first claim that there is a $q_0 > 0$ such that
\begin{align} \label{JNE1}
\int_M \psi^{-q_0} \, dv_g < C_0,
\end{align}
where $C_0 = C_0(\| \psi \|_{q_0}^{-1})$.  To see this, let $B(r) \subset M$ be a geodesic ball of radius $r > 0$, and $\eta \in C^{\infty}(M)$ a cut-off function
 with $\eta \equiv 1$ in $B(r)$, $\eta \equiv 0$ on $M \setminus B(2r)$, and $|\nabla \eta | \leq C r^{-1}$.  Also, for $\epsilon > 0$ let
\begin{align*}
\psi_{\epsilon} = \big( \psi^2 + \epsilon)^{1/2}.
\end{align*}
Then $\eta^2 \psi_{\epsilon}^{-1} \in W^{1,2}$, and using this as a test function in (\ref{E1def}),
\begin{align} \label{JN2}
\int_M  \langle \nabla (\eta^2 \psi_{\epsilon}^{-1}), \nabla \psi \rangle \, dv_g + \int_M c_n R \psi \, (\eta^2 \psi_{\epsilon}^{-1}) \, dv_g = \lambda_1(u) \int_M \psi \, (\eta^2 \psi_{\epsilon}^{-1})  \, u^{N-2} \, dv_g.
\end{align}
Since $\psi \leq \psi_{\epsilon}$, we can expand and estimate in the standard way to obtain
\begin{align*}
\int_{B(r)}  |\nabla \log \psi_{\epsilon}|^2  \, dv_g &\leq \int_M \eta^2 |\nabla \psi_{\epsilon}|^2 \, dv_g \\
&\leq C \int_M \big( \eta^2 + |\nabla \eta|^2 \big) \psi \, \psi_{\epsilon}  \, dv_g + C \int_M \psi \, \psi_{\epsilon} \, \bar{u}^{N-2} \, dv_g \\
&\leq C r^{n-2} + C \int_{B(2r)} u^{N-2} \, dv_g \\
&\leq C r^{n-2} +  C \| u \|_{L^N} r^{n-2}.
\end{align*}
By the Monotone Convergence Theorem it follows that
\begin{align*}
\int_{B(r)} |\nabla \log \psi|^2 \, dv_g \leq C r^{n-2}.
\end{align*}
where $C = C(|\lambda_1(u), \| u \|_N )$.  By the John-Nirenberg inequality, there is a $q_0 > 0$ such that
\begin{align} \label{JN5}
\left( \int_M \psi^{q_0} \, dv_g \right) \left( \int_M \psi^{-q_0} \, dv_g \right) \leq C.
\end{align}
Since we are assuming $\psi \geq 0$ is non-trivial, (\ref{JNE1}) follows.

An immediate consequence of (\ref{JNE1}) is that if $\psi \in E_1(u)$ and $\psi \not\equiv 0$, then $\psi^{-1}(0)$ has measure zero.  \medskip

\noindent $(ii)$  Suppose $\psi_1, \psi_2 \in E_1(u)$, normalized so that
\begin{align*}
\int_M \psi_1 \, dv_g = \int_M \psi_2 \, dv_g = 1.
\end{align*}
Since $\psi = \psi_1 - \psi_2 \in E_1(u)$, by part $(i)$ we see that $\psi \geq 0$ or $\psi \leq 0 \ a.e.$.  Since
\begin{align*}
\int_M \psi \, dv_g = \int_M \psi_1 \, dv_g - \int_M \psi_2 \, dv_g = 0,
\end{align*}
it follows that $\psi = 0 \ a.e.$, hence $\psi_1 = \psi_2$.  This shows that $\dim E_1(u) = 1$.  \medskip

\noindent $(iii)$  Denote
\begin{align} \label{chre}
\tilde \lambda_2(u) = \inf \mathcal{R}_g^u(\phi),
 \end{align}
 where the infimum is taken over all functions $\phi\in W^{1,2}$ such that
 \begin{align*}
 \int_M \phi \psi_1 u^{N-2}\;dv_g = 0
 \end{align*}
 for each $\psi_1 \in E_1(u)$.  Since $\dim E_1(u) = 1$ we can assume the orhtogonality condition holds for some fixed  (non-trivial) $\psi_1 \in E_1(u).$

 For any two-dimensional subspace $\Sigma_2 \in Gr_k^u$, there is a function $\psi_2 \in \Sigma_2$ satisfying the orthogonality condition
\begin{equation}\label{orthogonality}
\int_M \psi_1\psi_2u^{N-2}\;dv_g = 0,
\end{equation}
where $\psi_1$ is the first generalized eigenfunction associated to $\lambda_1(u)$. Then we have
\begin{equation}
\sup_{\phi\in\Sigma_2}\mathcal{R}_g^u(\phi) \ge \mathcal{R}_g^u(\psi_2) \ge \tilde \lambda_2(u),
\end{equation}
where the second inequality follows from the definition of $\tilde \lambda_2(u)$. Taking the infimum over all such $\Sigma_2\in Gr_2^u(W^{1,2})$ yields
\begin{equation}
\lambda_2(u)\ge \tilde \lambda_2(u).
\end{equation}

To deduce the opposite inequality, we start with an arbitrary function $\psi_2\in W^{1,2}$ satisfying (\ref{orthogonality}). Define $\Sigma_2^{\psi_2} := \langle \psi_1,\psi_2\rangle$ and note that it belongs to $Gr_2^u(W^{1,2})$. Therefore,
\begin{equation}
\mathcal{R}_g^u(\psi_2) = \sup_{\phi\in \Sigma_2^{\psi_2}} \mathcal{R}_g^u(\phi) \ge \lambda_2(u),
\end{equation}
where the equality follows from the orthogonality condition (\ref{orthogonality}), while the inequality follows from Definition \ref{GeneralizedEigenvalue}. Taking the infimum over all such $\psi_2$ yields
\begin{equation}
\tilde \lambda_2(u)\ge \lambda_2(u).
\end{equation}
\medskip

\noindent $(iv)$  As observed in \cite{ElSayed}, once the variational characterization of $\lambda_2(u)$ given by part $(iii)$ is established, one can apply the direct variational method to prove the existence of second generalized eigenfunctions.  

\end{proof}

As we observed in the Introduction, the number of negative eigenvalues of $L_g$, which we denote by $\nu([g])$, is a conformal invariant (see \cite{Canzani}).  If $u \in L^N_{+}(M)$ is a generalized conformal
factor, it was claimed in Proposition 3.1 of \cite{ElSayed} that the number of negative generalized eigenvalues associated to $u$ (denoted $\nu(u)$) is equal to $\nu([g])$.  However, the proof is incomplete, as it assumes $u^{-1}(0)$ has zero measure; i.e., that $u \in L^N_{>0}(M)$.   For the record we provide the correct statement and proof:

\begin{proposition}\label{NegativeEigen}
If $u\in L^N_{>0}$, then $\nu(u) = \nu([g])$.
\end{proposition}
\begin{proof}
Given any $k\in\mathbb{N}$, we first prove that $\lambda_k(u)<0$ implies $\lambda_k(1) = \lambda_k(L_g) <0$. If $\lambda_k(1)\ge 0$, then
\begin{equation}
\sup_{\phi\in \Sigma_k} \mathcal{R}_g^1(\phi)\ge 0,
\end{equation}
for all $\Sigma_k\in Gr_k^1(W^{1,2})$. By Lemma \ref{TestFunctions}, $Gr^u_k(W^{1,2})\subseteq Gr_k^1(W^{1,2})$, and thus $\lambda_k(u)\ge 0$. This shows that $\nu([g])\ge \nu(u)$.

Now assume that $u\in L^N_{>0}$. Since the zero locus of $u$ has zero Riemannian measure, by definition, we have $Gr_k^u(W^{1,2}) = Gr_k^1(W^{1,2})$ thanks to Lemma \ref{TestFunctions}. A similar argument shows that $\nu(u)\ge \nu([g])$.
\end{proof}

Controlling the multiplicity of the second eigenvalue is key to the regularization procedure that will be introduced in Section \ref{RegularizedFunctional}.

We will also need a compactness result for generalized eigenfunctions.  Before giving the precise statement, we make some preliminary observations.  Suppose  $\{ u_i \} \subset L^N_{>0}$ is a sequence of generalized conformal factors, normalized so that
\begin{align} \label{normi}
\int_M u_i^N \, dv_g = 1.
\end{align}
If we let
\begin{align*}
v_i = u_i^{N-2} \geq 0,
\end{align*}
then
\begin{align*}
\int_M v_i^{\frac{n}{2}} \, dv_g = \int_M u_i^N \, dv_g = 1.
\end{align*}
Therefore, a subsequence of $\{ v_i \}$ will converge weakly in $L^{\frac{n}{2}}(M)$ to $\bar{v} \in L^{\frac{n}{2}}(M)$, with
\begin{align} \label{vFat}
\int_M \bar{v}^{\frac{n}{2}} \, dv_g \leq 1.
\end{align}
Note that if $\psi \in C^{\infty}(M)$ is non-negative, then
\begin{align*}
0 \leq \int_M \psi v_i \, dv_g \rightarrow \int_M \psi \bar{v} \, dv_g,
\end{align*}
hence $\bar{v} \geq 0$ a.e..  Define $\bar{u} \in L^N_{+}(M)$ by
\begin{align*}
\bar{u}^{N-2} = \bar{v}.
\end{align*}

Of course, it could happen that the sequence of eigenvalues associated to $\{ u_i \}$ degenerates.  The first compactness theorem we prove roughly says that if the sequence of first or second eigenvalues stays bounded, then the corresponding sequence of generalized eigenfunctions is compact: 
\medskip

\begin{proposition} \label{Cprop} Assume $\nu([g]) \geq 2$ and $\ker L_g$ is trivial.

Let $\{ u_i \} \subset L^N_{>0}$ be a sequence of generalized conformal factors, normalized as in (\ref{normi}), and $\bar{u}^{N-2}$ be a weak subsequential limit of $\{ u_i^{N-2} \}$ as in the preamble.  For $k = 1$ or $2$, let $\phi_{k,i}$ be a generalized $k^{th}$ eigenfunction associated to $u_i$, normalized so that
\begin{align} \label{normii}
\int_M \phi_{k,i}^2 u_i^{N-2} \, dv_g = 1.
\end{align}

Assume there is a constant $C = C(g)$ such that
\begin{align} \label{EVB}
|\lambda_k(u_i)| \leq C.
\end{align}
Then there is a $C = C(g)$ such that
\begin{align} \label{WW}
\| \phi_{k,i} \|_{W^{1,2}(M)} + \| \phi_{k,i} \|_{L^{\infty}(M)} \leq C.
\end{align}
After possibly restricting to a subsequence, $\{ \phi_{k,i} \}$ converges weakly in $W^{1,2}(M)$ and strongly in $L^{N/2}(M)$ to $\phi_k \in W^{1,2}(M) \cap L^{\infty}(M)$ satisfying
\begin{align} \label{wE}
L_g \phi_k = \bar{\lambda}_k \phi_k \bar{u}^{N-2}
\end{align}
in the $W^{1,2}$-sense, where $\bar{\lambda}_k = \lim_{i \to \infty} \lambda_k(u_i)$.  Moreover,
\begin{align} \label{L21}
\int_M \phi_k^2 \bar{u}^{N-2} \, dv_g = 1.
\end{align}
\end{proposition}

\medskip 

\begin{remark}  In the statement of Proposition \ref{Cprop}, we do not claim that the (weak) limit of generalized eigenfunctions $\phi_k$ is itself a generalized eigenfunction.  To prove this we need additional conditions on the generalized conformal factor $\bar{u}$; see Proposition \ref{Cpropplus} below. 
\end{remark}

\medskip

\begin{proof} We begin with a key lemma:

\begin{lemma}  \label{L2Lemme} There is a constant $C > 0$ such that for all $i \geq 1$,
\begin{align} \label{L2start}
\| \phi_{k,i} \|_{L^2(M)} \leq C.
\end{align}
\end{lemma}

\begin{proof}   Assume to the contrary that there is a subsequence (still denoted $\{ \phi_{k,i} \}$) such that
\begin{align} \label{bup}
\| \phi_{k,i} \|_{L^2(M)} \rightarrow \infty.
\end{align}
Define
\begin{align} \label{scor1}
\tilde{\phi}_{k,i} = \dfrac{ \phi_{k,i} }{ \| \phi_{k,i} \|_{L^2(M)}}.
\end{align}
Then $\tilde{\phi}_{k,i}$ is a generalized eigenfunction with
\begin{align} \label{newnorm}
\| \tilde{\phi}_{k,i} \|_{L^2(M)} = 1,
\end{align}
and hence is a $W^{1,2}(M)$-solution of
\begin{align} \label{testy}
-\Delta_g \tilde{\phi}_{k,i} + c_n R_g \tilde{\phi}_{k,i} = \lambda_k(u_i) \tilde{\phi}_{k,i} u_{i}^{N-2}.
\end{align}
If we take $\tilde{\phi}_{k,i}$ as a test function in (\ref{testy}), then
\begin{align} \label{testyy} \begin{split}
\int_M \left( |\nabla_g \tilde{\phi}_{k,i}|^2 + c_n R_g \tilde{\phi}_{k,i}^2 \right) \, dv_g &= \lambda_k(u_i) \int_M \tilde{\phi}_{k,i}^2 u_{\epsilon}^{N-2} \, dv_g \\
&\leq 0.
\end{split}
\end{align}
Therefore,
\begin{align*}
\int_M  |\nabla_g \tilde{\phi}_{k,i}|^2 &\leq  -c_n  \int_M R_g \tilde{\phi}_{k,i}^2 \, dv_g  \\
&\leq C  \| \tilde{\phi}_{k,i}\|_{L^2(M)}^2 \\
&\leq C.
\end{align*}
It follows that
\begin{align} \label{W12t}
\| \tilde{\phi}_{k,i} \|_{W^{1,2}(M)} \leq C.
\end{align}
This implies there is a $\tilde{\phi}_k \in W^{1,2}(M)$ such that a subsequence of $\{ \tilde{\phi}_{k,i} \}$ (still denoted $\{ \tilde{\phi}_{k,i} \}$) converges weakly in $W^{1,2}(M)$, and strongly in $L^2(M)$, to $\tilde{\phi}_k$.  By strong convergence,
\begin{align} \label{SSP}
\int_M \tilde{\phi}_k^2 \, dv_g = 1.
\end{align}

Let $\psi \in C^{\infty}(M)$, then using $\psi$ as a test function in (\ref{testy}) we have
\begin{align} \label{testy2}
\int_M \left( \langle \nabla_g \tilde{\phi}_{k,i}, \nabla_g \psi \rangle + c_n R_g \tilde{\phi}_{k,i} \psi \right) \, dv_g = \lambda_k(u_i) \int_M \psi \tilde{\phi}_{k,i} u_i^{N-2} \, dv_g.
\end{align}
We claim that the right-hand side converges to zero as $i \to \infty$.  To see this, we use the fact that $|\lambda_k(u_i)|$ and $|\psi|$ are bounded, hence
\begin{align*}
\left| \lambda_k(u_i) \int_M \psi \tilde{\phi}_{k,i} u_i^{N-2} \, dv_g \right| &\leq C  \int_M |\tilde{\phi}_{k,i} | u_i^{N-2} \, dv_g \\
&= C \dfrac{1}{ \| \phi_{k,i} \|_{L^2(M)}} \int_M  |\phi_{k,i}| u_i^{N-2} \, dv_g \\
&\leq C \dfrac{1}{ \| \phi_{k,i} \|_{L^2(M)}} \left( \int_M  \phi_{k,i}^2 u_i^{N-2} \, dv_g \right)^{1/2} \left( \int_M  u_{\epsilon_i}^{N-2}  \right)^{1/2} \\
&\leq  C \dfrac{1}{ \| \phi_{k,i} \|_{L^2(M)}} \\
&\rightarrow 0, \ \ i \to \infty.
\end{align*}
Also, since $\tilde{\phi}_{k,i}$ converges weakly in $W^{1,2}(M)$, for the left-hand side of (\ref{testy2}) we have
\begin{align} \label{scor2}
\lim_{i \to \infty} \int_M \left( \langle \nabla_g \tilde{\phi}_{k,i}, \nabla_g \psi \rangle  + c_n R_g \tilde{\phi}_{k,i} \psi \right) \, dv_g =  \int_M \left( \langle \nabla_g \tilde{\phi}_k, \nabla_g \psi \rangle  + c_n R_g \tilde{\phi}_k \psi \right) \, dv_g.
\end{align}
Therefore, $\tilde{\phi}_k$ is a $W^{1,2}$-solution of
\begin{align*}
L_g \tilde{\phi}_k = 0.
\end{align*}
Moreover, by (\ref{SSP}), $\tilde{\phi}_k$ is not identically zero.  Since we are assuming that $\ker L_g$ is trivial, this is a contradiction.
\end{proof}

Since the sequence $\{ \phi_{k,i} \}$ is uniformly bounded in $L^2$, we can repeat the arguments of the preceding lemma to obtain a $W^{1,2}$-bound.  More precisely:  for all $i \geq 1$, $\phi_{k,i}$ is a $W^{1,2}$-solution of
\begin{align} \label{Eki}
L_g \phi_{k,i} = \lambda_k(u_i) \phi_{k,i} u_i^{N-2},
\end{align}
and using $\phi_{k,i}$ as a test function in (\ref{Eki}) we get
\begin{align} \label{EnS}
\int_M \big( |\nabla_g \phi_{k,i}|^2 + c_n R_g \phi_{k,i}^2 \big) \, dv_g = \lambda_k(u_i) \int_M \phi_{k,i}^2 u_i^{N-2} \, dv_g \leq 0,
\end{align}
hence
\begin{align*}
\int_M |\nabla_g \phi_{k,i}|^2 \, dv_g \leq C \int_M \phi_{k,i}^2 \, dv_g.
\end{align*}
It follows from (\ref{L2start}) that
\begin{align} \label{EnS2}
\| \phi_{k,i} \|_{W^{1,2}(M)} \leq C,
\end{align}
where $C = C(g)$.

To prove the $L^{\infty}$-bound, let $f_{k,i} := \sqrt{ 1 + \phi^2_{k,i} }$.  Since $\lambda_k(u_i) < 0$, it is easy to see that $f_{k,i}$ satisfies
\begin{align*}
\Delta_g f_{k,i} \geq   c_n R_g f_{k,i}.
\end{align*}
Furthermore, by (\ref{EnS2}), $\| f_{k,i} \|_{W^{1,2}} \leq C$.  It then follows from a standard Moser iteration argument that
\begin{align} \label{pkisup}
\|  \phi_{k,i} \|_{L^{\infty}} \leq \|  f_{k,i} \|_{L^{\infty}} \leq C.
\end{align}
Combining (\ref{EnS2}) and (\ref{pkisup}), we conclude that there is a $\phi_k \in W^{1,2}(M) \cap L^{\infty}(M)$ and a subsequence of $\{ \phi_{k,i} \}$ (still denoted $\{ \phi_{k,i} \}$) that converges weakly to $\phi_k$ in $W^{1,2}(M)$, and strongly in $L^{N/2}(M)$.

To prove (\ref{wE}) and (\ref{L21}) we will need the following elementary result, which will be used repeatedly:

\begin{claim} \label{weakClaim}
If $\{ w_i \} \subset L^{N/2}(M)$ converges strongly to $w$ in $L^{N/2}(M)$, then
\begin{align} \label{www}
\int_M w_i u_i^{N-2} \, dv_g \rightarrow \int_M w \bar{u}^{N-2} \, dv_g, \ \ i \to \infty.
\end{align}
\end{claim}

The proof of (\ref{www}) follows from the fact that $u_i^{N-2}$ converges weakly in $L^{\frac{n}{2}}(M)$ to $\bar{u}^{N-2}$.  Indeed,
\begin{align} \label{www2}
\int_M w_i u_i^{N-2} \,dv_g &= \int_M (w_i - w) u_i^{N-2} \, dv_g + \int_M w u_i^{N-2} \, dv_g.
\end{align}
The first integral on the right converges to zero as $i \to \infty$ by H\"older's inequality:
\begin{align*}
\left| \int_M (w_i - w) u_i^{N-2} \, dv_g \right| \leq \| w_i - w \|_{L^{N/2}(M)} \| u_i \|_{L^N(M)} \rightarrow 0, \ \ i \to \infty.
\end{align*}
Therefore, taking the limit as $i \to \infty$ in (\ref{www2}) and using the weak convergence of $u_i^{N-2}$, the claim follows.

We now show that $\phi_k$ is a $W^{1,2}(M)$-solution of (\ref{wE}).  To see this, let $\psi \in C^{\infty}(M)$.  Since $\phi_{k,i}$ is a weak solution of (\ref{Eki}), it follows that
\begin{align} \label{ww1}
\int_M \left( \langle \nabla_g \psi, \nabla_g \phi_{k,i} \rangle + c_n R_g \psi \phi_{k,i} \right) \, dv_g  = \lambda_k(u_i) \int_M \psi  \phi_{k,i} u_i^{N-2} \, dv_g.
\end{align}
Since $\psi \phi_{k,i}$ converges in $L^{N/2}(M)$ to $\psi \phi_k$, by Claim \ref{weakClaim} we have
\begin{align} \label{ww2}
\lim_{i \to \infty} \lambda_k(u_i) \int_M \psi  \phi_{k,i} u_i^{N-2} \, dv_g = \bar{\lambda}_k \int_M \psi \psi_k \bar{u}^{N-2} \, dv_g.
\end{align}
Also, since $\phi_{k,i}$ converges weakly to $\phi_k$ in $W^{1,2}(M)$, for the left-hand side of (\ref{ww1}) we have
\begin{align} \label{ww3}
\lim_{i \to \infty} \int_M \left( \langle \nabla_g \psi, \nabla_g \phi_{k,i} \rangle + c_n R_g \psi \phi_{k,i} \right) \, dv_g = \int_M \left( \langle \nabla_g \psi, \nabla_g \phi_k \rangle + c_n R_g \psi \phi_k \right) \, dv_g.
\end{align}
Combining (\ref{ww1}) -- (\ref{ww3}), we get (\ref{wE}).

To prove (\ref{L21}), we take $w_i = \phi_{k,i}^2$ and $w = \phi_k^2$ in Claim \ref{weakClaim}.  We need to show $w_i \to w$ in $L^{N/2}(M)$.  By the $L^{\infty}$-bound of $\phi_{k,i}$ and $\phi_k$ we have
\begin{align*}
| w_i - w| = |\phi_{k,i}^2 - \phi_k^2| = |( \phi_{k,i} + \phi_k) ( \phi_{k,i} - \phi_k )| \leq C |\phi_{k,i} - \phi_k|.
\end{align*}
Since $\phi_{k,i} \rightarrow \phi_k$ in $L^{\frac{n}{n-2}}(M)$, it follows that $w_i \to w$ in $L^{N/2}(M)$.  Applying Claim \ref{weakClaim}, we get
\begin{align*}
1 = \lim_{i \to \infty} \int_M \phi_{k,i}^2 u_i^{N-2} \, dv_g = \int_M \phi_k^2 \bar{u}^{N-2} \, dv_g,
\end{align*}
which proves (\ref{L21}).  This completes the proof of Proposition \ref{Cprop}.  
\end{proof}

As we indicated above, the next compactness result shows that the eigenvalue bound (\ref{EVB}) can be dropped if assume the weak limit $\bar{u} \in L^N_{>0}(M)$:

\begin{proposition} \label{Cpropplus}  Assume $\nu([g]) \geq 2$, and $\ker L_g$ is trivial.

Let $\{ u_i \} \subset L^N_{>0}$ be a sequence of generalized conformal factors, normalized as in (\ref{normi}), and $\bar{u}^{N-2}$ a weak subsequential limit of $\{ u_i^{N-2} \}$ as in the preamble to Proposition \ref{Cprop}.  For $k=1$ or $2$, let $\phi_{k,i}$ be a generalized $k^{th}$ eigenfunction associated to $u_i$, normalized so that
\begin{align} \label{normii}
\int_M \phi_{k,i}^2 u_i^{N-2} \, dv_g = 1.
\end{align}

If $\bar{u} \in L^N_{>0}(M)$, then there is a constant $C > 0$ such that
\begin{align} \label{WWa}
\| \phi_{k,i} \|_{W^{1,2}(M)} + \| \phi_{k,i} \|_{L^{\infty}(M)} \leq C.
\end{align}
Moreover, a subsequence of $\{ \phi_{k,i} \}$ converges weakly in $W^{1,2}(M)$, and strongly in $L^{N/2}(M)$, to
$\phi_k \in W^{1,2}(M) \cap L^{\infty}(M)$, a (non-trivial) $k^{th}$-generalized eigenfunction associated to $\bar{u}$, and satisfying the normalization
\begin{align} \label{2112}
\int_M \phi_k^2 \bar{u}^{N-2} \, dv_g = 1.
\end{align}
\end{proposition}

\begin{proof}  We first show that the assumption $\bar{u} \in L^N_{>0}(M)$ prevents $\lambda_1(u_i)$ from diverging as $i \to \infty$:

\begin{lemma} \label{LN1Lemma} There is a constant $C > 0$ such that
\begin{align} \label{LN1}
\lambda_2(u_i) > \lambda_1(u_i) \geq -C
\end{align}
for all $i$.
\end{lemma}

\begin{proof}  Assume on the contrary that $\lambda_1(u_i)\rightarrow -\infty$ along some subsequence. As in the proof of Proposition \ref{Cprop}, let
\begin{align} \label{scor1}
\tilde{\phi}_{1,i} = \dfrac{ \phi_{1,i} }{ \| \phi_{1,i} \|_{L^2(M)}}.
\end{align}
Then $\tilde{\phi}_{k,i}$ is a generalized first eigenfunction with
\begin{align} \label{newnormie}
\| \tilde{\phi}_{1,i} \|_{L^2(M)} = 1,
\end{align}
and is a $W^{1,2}(M)$-solution of
\begin{align}
-\Delta_g \tilde{\phi}_{1,i} + c_n R_g \tilde{\phi}_{1,i} = \lambda_1(u_i) \tilde{\phi}_{1,i} u_{i}^{N-2}.
\end{align}
Using $\tilde{\phi}_{1,i}$ as a test function we have
\begin{align} \label{testyy2}
\int_M \left( |\nabla_g \tilde{\phi}_{1,i}|^2 + c_n R_g \tilde{\phi}_{1,i}^2 \right) \, dv_g = \lambda_1(u_i) \int_M \tilde{\phi}_{1,i}^2 u_{\epsilon}^{N-2} \, dv_g.
\end{align}
From this inequality we conclude two things.  First, since $\lambda_1(u_i) < 0$, we have
\begin{align*}
\int_M |\nabla_g \tilde{\phi}_{1,i}|^2 \, dv_g \leq -c_n \int_M R_g \tilde{\phi}_{1,i}^2  \, dv_g  \leq C,
\end{align*}
hence
\begin{align} \label{W12t2}
\| \tilde{\phi}_{1,i} \|_{W^{1,2}(M)} \leq C.
\end{align}
Therefore, there is a $\tilde{\phi}_1 \in W^{1,2}(M)$ such that a subsequence of $\{ \tilde{\phi}_{1,i} \}$ (still denoted $\{ \tilde{\phi}_{1,i} \}$) converges weakly in $W^{1,2}(M)$, and strongly in $L^{N/2}(M)$, to $\tilde{\phi}_1$.  By strong convergence,
\begin{align} \label{SSP1}
\int_M \tilde{\phi}_1^2 \, dv_g = 1.
\end{align}

The second consequence of (\ref{testyy2}) is the following: since the left-hand side of (\ref{testyy2}) is bounded below by a constant and $\lambda_1(u_i) \rightarrow -\infty$, we must have
\begin{align} \label{slam}
\int_M \tilde{\phi}_{1,i}^2 u_i^{N-2} \, dv_g \rightarrow 0, \ \ i \to \infty.
\end{align}
Therefore,
\begin{align*}
\int_M \tilde{\phi}_{1,i} u_i^{N-2} \, dv_g &\leq \left( \int_M \tilde{\phi}_{1,i}^2 u_i^{N-2} \, dv_g \right)^{1/2} \left( \int_M u_i^{N-2} \, dv_g \right)^{1/2} \\
& \rightarrow 0, \ \ i \to \infty.
\end{align*}
Since $\{ \tilde{\phi}_{1,i} \}$ converges strongly in $L^{N/2}(M)$ to $\tilde{\phi}_1$, we conclude
\begin{align} \label{flail}
\int_M \tilde{\phi}_1 \bar{u}^{N-2} \, dv_g = \lim_{i \to \infty} \int_M \tilde{\phi}_{1,i} u_i^{N-2} \, dv_g = 0.
\end{align}
This implies $\tilde{\phi}_1 \bar{u}^{N-2} = 0$.  Since $\bar{u} \in L^N_{>0}(M)$, it follows that $\tilde{\phi}_1 = 0$ a.e., which contradicts  (\ref{SSP1}).  The lemma follows.
\end{proof}

Since Lemma \ref{LN1Lemma} gives us uniform bounds for $|\lambda_k(u_i)|$, by Proposition \ref{Cprop} we have uniform bounds for the corresponding generalized eigenfunctions:
\begin{align} \label{WWab}
\| \phi_{k,i} \|_{W^{1,2}(M)} + \| \phi_{k,i} \|_{L^{\infty}(M)} \leq C.
\end{align}
Therefore, a subsequence of $\{ \phi_{k,i} \}$ converges weakly in $W^{1,2}(M)$, and strongly in $L^{N/2}(M)$, to $\phi_k \in W^{1,2}(M) \cap L^{\infty}(M)$ satisfying
\begin{align} \label{geve1}
L_g \phi_k = \bar{\lambda}_k \phi_k \bar{u}^{N-2},
\end{align}
where $\lambda_k(u_i) \rightarrow \bar{\lambda}_k$.  Also, $\phi_k$ satisfies
\begin{align} \label{L211}
\int_M \phi_k^2 \bar{u}^{N-2} \, dv_g = 1.
\end{align}
It remains to show that for $k=1$ and $k=2$, $\bar{\lambda}_k = \lambda_k(\bar{u})$, hence $\phi_k$ is a $k^{th}$-generalized eigenfunction associated to $\bar{u}$.  We remark that $\lambda_k(\bar{u})$ is well defined since we are assuming $\bar{u} \in L^N_{>0}$.

We first show that $\bar{\lambda}_1 = \lambda_1(\bar{u})$.  Let $\bar{\phi}_1$ be a generalized first eigenfunction associated to $\bar{u}$.  We may assume $\bar{\phi}_1 \geq 0$.  Since $\bar{\phi}_1$ is a $W^{1,2}$-solution of
\begin{align} \label{ww4}
L_g \bar{\phi}_1 = \lambda_1(\bar{u}) \bar{\phi}_1 \bar{u}^{N-2},
\end{align}
we can use $\phi_1$ as a test function in (\ref{ww4}) and $\bar{\phi}_1$ as a test function in (\ref{ww3}) to conclude
\begin{align*}
 \lambda_1(\bar{u}) \int_M \phi_1 \bar{\phi}_1 \bar{u}^{N-2} \, dv_g = \bar{\lambda}_1 \int_M \bar{\phi}_1 \phi_1 \bar{u}^{N-2} \, dv_g,
 \end{align*}
 hence
 \begin{align*}
 ( \lambda_1(\bar{u}) - \bar{\lambda}_1 ) \int_M \phi_1 \bar{\phi}_1 \bar{u}^{N-2} \, dv_g = 0.
 \end{align*}
If $\lambda_1(\bar{u}) \neq \bar{\lambda}_1$, then since $\phi_1 \bar{\phi}_1 \geq 0$ and $\bar{u} \in L^N_{>0}$, it follows that $\phi_1 \bar{\phi}_1 = 0$ a.e.  However, by part (i) of Proposition \ref{efProp}, $\bar{\phi}$ can only vanish on a set of zero measure.  Since $\phi_1 \bar{\phi}_1 = 0$ a.e., it must follow that $\phi_1 = 0$ a.e., which contradicts (\ref{L211}).  Therefore, $ \lambda_1(\bar{u}) = \bar{\lambda}_1$.

To see that $\bar{\lambda}_2 = \lambda_2(\bar{u})$, we use the characterization of $\lambda_2$ from part $(iii)$ of Proposition \ref{efProp}. We begin with a claim:

\begin{claim} \label{ConClaim}
\begin{align} \label{bally}
\int_M \phi_1 \phi_2 \bar{u}^{N-2} \, dv_g = 0,
\end{align}
and
\begin{align} \label{ballyhoo}
\lim_{i \to \infty} \int_M \phi_2 \phi_{1,i} u_i^{N-2} \, dv_g = 0.
\end{align}
\end{claim}

\begin{proof}  To prove (\ref{bally}), we use the fact that
\begin{align} \label{bally2}
\int_M \phi_{1,i} \phi_{2,i} u_i^{N-2} \, dv_g = 0.
\end{align}
Recall $\phi_{k,i}$ converges to $\phi_k$ in $L^{N/2}(M)$ and is moreover bounded.  It follows that $\phi_{1,i} \phi_{2,i}$ converges to $\phi_1 \phi_2$ in $L^{N/2}(M)$, since
\begin{align*}
| \phi_{1,i} \phi_{2,i} - \phi_1 \phi_2 | &\le | (\phi_{1,i} - \phi_1)\phi_{2,i}| + |(\phi_{2,i} - \phi_2)\phi_1| \\
&\leq C  |\phi_{1,i} - \phi_1| + |\phi_{2,i} - \phi_2|.
\end{align*}
Therefore, by Claim \ref{weakClaim}
\begin{align*}
0 = \lim_{i \to \infty} \int_M  \phi_{1,i} \phi_{2,i} u_i^{N-2} \, dv_g = \int_M \phi_1 \phi_2 \bar{u}^{N-2} \, dv_g,
\end{align*}
which proves (\ref{bally}).

The proof of (\ref{ballyhoo}) is similar.  Again, since $\phi_{1,i}$ converges to $\phi_1$ in $L^{N/2}(M)$ and $\phi_2$ is bounded, it follows that $\phi_2 \phi_{1,i}$ to $\phi_2 \phi_1$ in $L^{N/2}(M)$, hence
\begin{align*}
\lim_{i \to \infty} \int_M \phi_2 \phi_{1,i} u_i^{N-2} \, dv_g = \int_M \phi_2 \phi_1 \bar{u}^{N-2} \, dv_g = 0.
\end{align*}
\end{proof}

As a consequence of (\ref{bally}) and part $(iii)$ of Proposition \ref{efProp},
\begin{align} \label{Ray2}
\lambda_2(\bar{u}) \leq \dfrac{ \int_M \left( |\nabla_g \phi_2 |^2 + c_n R_g \phi_2^2 \right) \, dv_g }{ \int \phi_2^2 u^{N-2} \, dv_g }.
\end{align}
By (\ref{L21}) and the fact that $\{ \phi_{2,i} \}$ converges weakly in $W^{1,2}(M)$ to $\phi_2$,
\begin{equation} \label{Ray3}
\begin{split}
\dfrac{ \int_M \left( |\nabla_g \phi_2 |^2 + c_n R_g \phi_2^2 \right) \, dv_g }{ \int \phi_2^2 u^{N-2} \, dv_g } &\leq \liminf_{i \to \infty} \dfrac{ \int_M \left( |\nabla_g \phi_{2,i}|^2 + c_n R_g \phi_{2,i}^2 \right) \, dv_g }{ \int_M \phi_{2,i}^2  u_i^{N-2} \, dv_g }\\ &= \liminf_{i \to \infty} \lambda_{2}(u_i) = \bar{\lambda}_2.
\end{split}
\end{equation}
By (\ref{Ray2}) we conclude
\begin{align} \label{Ray4}
\lambda_2(\bar{u}) \leq \bar{\lambda}_2.
\end{align}

To prove the opposite inequality, we argue as follows:  For each $i$ let
\begin{align} \label{ci1}
\tilde{\phi}_{2,i} = \phi_2 - \alpha_i \phi_{1,i},
\end{align}
where $\alpha_i$ is given by
\begin{align} \label{ci2}
\alpha_i = \int_M \phi_2 \phi_{1,i} u_i^{N-2} \, dv_g.
\end{align}
Then
\begin{align} \label{perp1}
\int_M \tilde{\phi}_{2,i} \phi_{1,i} u_i^{N-2} \, dv_g = 0.
\end{align}
Therefore, by part $(iii)$ of Proposition \ref{efProp},
\begin{align} \label{Rayz1}
\lambda_2(u_i) \leq \dfrac{ \int_M \left( |\nabla_g \tilde{\phi}_{2,i} |^2 + c_n R_g \tilde{\phi}_{2,i}^2 \right)\, dv_g }{ \int_M \tilde{\phi}_{2,i}^2 u_i^{N-2} \, dv_g }.
\end{align}

By (\ref{ballyhoo}), $\alpha_i \rightarrow 0$ as $i \to \infty$. Therefore,
\begin{align} \label{elim}
\int_M \left( |\nabla_g \tilde{\phi}_{2,i} |^2 + c_n R_g \tilde{\phi}_{2,i}^2 \right)\, dv_g \rightarrow  \int_M \left( |\nabla_g \phi_2 |^2 + c_n R_g \phi_2^2 \right)\, dv_g, \ \ i \to \infty.
\end{align}
Also, by weak convergence of $u_i^{N-2}$,
\begin{align*}
\int_M \phi_2^2 u_i^{N-2} \, dv_g \rightarrow \int_M \phi_2^2 \bar{u}^{N-2} \, dv_g = 1,
\end{align*}
which implies
\begin{align} \label{Dlim}
\int_M \tilde{\phi}_{2,i}^2 u_i^{N-2} \, dv_g \rightarrow 1, \ \ i \to \infty.
\end{align}
By (\ref{Rayz1}), (\ref{elim}), and (\ref{Dlim}), we obtain
\begin{align*}
\lim_{i \to \infty} \lambda_2(u_i) &\leq \dfrac{\int_M \left( |\nabla_g \phi_2 |^2 + c_n R_g \phi_2^2 \right)\, dv_g }{ \int_M \phi_2^2 \bar{u}^{N-2} \, dv_g }  \\
&= \lambda_2(\bar{u}).
\end{align*}
Combining this with (\ref{Ray4}), we have $\bar{\lambda}_2 = \lambda_2(\bar{u})$.
\end{proof}


In Section \ref{VariationFormulas} it will sometimes be convenient to choose a different normalization for the sequence of generalized eigenfunctions.  We therefore record the following corollary:

\begin{corollary} \label{Ccor} Same assumptions as Proposition \ref{Cpropplus}.  Let
\begin{align} \label{psiki}
\psi_{k,i} = \beta_{k,i} \phi_{k,i},
\end{align}
where $\beta_{k,i}$ is chosen so that $\psi_{k,i}$ satisfies the normalization
\begin{align} \label{mixnorm}
\int_M \psi_{k,i}^2 \bar{u}^{N-2} \, dv_g = 1.
\end{align}
Then there is a $C = C(g)$ such that
\begin{align} \label{WWpsi}
\| \psi_{k,i} \|_{W^{1,2}(M)} + \| \psi_{k,i} \|_{L^{\infty}(M)} \leq C.
\end{align}
Also, after possibly restricting to a subsequence, $\{ \psi_{k,i} \}$ converges weakly in $W^{1,2}(M)$, and strongly in $L^{N/2}(M)$, to $\phi_k \in W^{1,2}(M) \cap L^{\infty}(M)$, a (non-trivial) $k^{th}$-generalized eigenfunction associated to $\bar{u}$ satisfying the normalization (\ref{L211}).
\end{corollary}

\begin{proof}  By (\ref{mixnorm}),
\begin{align*}
\beta_{k,i}^2 = \left( \int_M \phi_{k,i}^2 \bar{u}^{N-2} \, dv_g \right)^{-1}.
\end{align*}
It follows from the estimates of Proposition \ref{Cpropplus} that along a subsequence,
\begin{align*}
\int_M \phi_{k,i}^2 \bar{u}^{N-2} \, dv_g \rightarrow \int_M \phi_k^2 \bar{u}^{N-2} \, dv_g = 1, \ \ i \to \infty.
\end{align*}
Therefore, $\beta_{k,i} \rightarrow 1$ as $i \to \infty$, and the estimates in (\ref{WWpsi}) follow from the estimates (\ref{WW}).  Also, it is clear that $\psi_{k,i}$ converges weakly in $W^{1,2}(M)$, and strongly in $L^{N/2}(M)$, to $\phi_k \in W^{1,2}(M) \cap L^{\infty}(M)$, the limit in the conclusion of Proposition \ref{Cprop}.
\end{proof}

\section{First Variation Formulas}\label{VariationFormulas}

Throughout this section we assume that we are given a function $u\in L^{N}_{>0}$.  By Propositions \ref{Prop22} and \ref{efProp}, this guarantees $\lambda_1(u)>-\infty$ and the existence of generalized eigenfunctions.  For any $h \in L^\infty := L^\infty(M^n,g)$, let
\begin{equation}
u_t = u + th u = u(1+th).
\end{equation}
We refer to $h$ as the generating function of the deformation $u_t$ of $u$, and it will be fixed throughout this section. The goal is to understand the behavior of $\lambda_k(u_t)$ around $t=0$ for $k=1$ and $k=2$, thus it is always possible to think of $t$ as living in a small neighborhood $(-\delta,\delta)$ of zero. In particular, we choose $\delta>0$ small enough such that
\begin{equation}
|1+th| \ge 1 - \delta \|h\|_{\infty} > 0.
\end{equation}

\begin{proposition}\label{T-functions} Test functions for $\lambda_k(u)$ ($k=1$ or $2$) are also test functions for $\lambda_k(u_t)$, and vice versa. Also, $\lambda_k(u_t)$ is finite for all $t\in (-\delta,\delta)$.
\end{proposition}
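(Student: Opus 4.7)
The plan is to exploit the uniform positivity and boundedness of the factor $1+th$ for $t\in(-\delta,\delta)$: by choice of $\delta$ one has $0 < 1-\delta\|h\|_\infty \leq 1+th \leq 1+\delta\|h\|_\infty$ almost everywhere on $M$. Consequently $u$ and $u_t=u(1+th)$ share the same essential zero set, and the weights $u^{N-2}$ and $u_t^{N-2}$ are pointwise comparable with explicit constants $(1\pm\delta\|h\|_\infty)^{N-2}$.

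For the first statement, I would take $\Sigma_k=\langle\phi_1,\dots,\phi_k\rangle \in Gr_k^u(W^{1,2})$ and observe that
\begin{equation*}
\phi_i\, u_t^{(N-2)/2} \;=\; (1+th)^{(N-2)/2}\,\phi_i\, u^{(N-2)/2}.
\end{equation*}
Since $(1+th)^{(N-2)/2}$ is bounded away from $0$ and $\infty$, a relation $\sum a_i\phi_i u_t^{(N-2)/2}=0$ a.e.\ can be divided through on the set $\{u>0\}$ to yield $\sum a_i\phi_i u^{(N-2)/2}=0$ a.e., forcing $a_i=0$. The reverse direction is symmetric via $u = u_t/(1+th)$. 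Hence $Gr_k^u(W^{1,2}) = Gr_k^{u_t}(W^{1,2})$, which is precisely the claim that test functions coincide.

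For the finiteness of $\lambda_k(u_t)$, the lower bound $\lambda_k(u_t)\geq\lambda_1(u_t)>-\infty$ follows immediately from the min-max characterization together with El Sayed's theorem applied to $u_t$, which is positive almost everywhere. For the upper bound, I would select an almost-optimal $\Sigma_k\in Gr_k^u(W^{1,2})$ for $\lambda_k(u)$ (itself finite by the Ammann--Humbert--El Sayed generalized spectral theory). By the first part $\Sigma_k\in Gr_k^{u_t}(W^{1,2})$, and the pointwise weight comparability yields
\begin{equation*}
\mathcal{R}^{u_t}_g(\psi) \;=\; r(\psi)\,\mathcal{R}^u_g(\psi), \qquad r(\psi):=\frac{\int_M \psi^2 u^{N-2}\,dv_g}{\int_M \psi^2 u_t^{N-2}\,dv_g},
\end{equation*}
where $r(\psi)$ is uniformly sandwiched in the compact interval $[(1+\delta\|h\|_\infty)^{-(N-2)},\,(1-\delta\|h\|_\infty)^{-(N-2)}]\subset(0,\infty)$, independently of $\psi$. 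Taking $\sup$ over $\psi\in\Sigma_k\setminus\{0\}$ then supplies a finite upper bound for $\lambda_k(u_t)$.

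The one subtle point to watch for is that when $\nu([g])>1$ the Rayleigh quotient $\mathcal{R}^u_g$ may be negative, so its product with $r(\psi)$ is not controlled by a one-sided inequality. However, the uniform two-sided sandwiching of $r(\psi)$ by fixed positive constants independent of $\psi$ makes this a matter of bookkeeping: the supremum over $\Sigma_k$ remains finite regardless of sign. Beyond that, the entire proposition reduces to the elementary boundedness and positivity of $1+th$.
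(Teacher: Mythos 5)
Your proof is correct and follows essentially the same route as the paper: the key observation in both is that $1+th$ is bounded between positive constants, so $u$ and $u_t$ have the same positive set and hence the same modified Grassmannians, with the finiteness of $\lambda_k(u_t)$ then following from the pointwise comparability of the weights $u^{N-2}$ and $u_t^{N-2}$ (and El Sayed's bound $\lambda_1(u_t)>-\infty$, since $u_t$ is positive a.e.). You simply spell out the finiteness bookkeeping, including the sign issue, more explicitly than the paper does.
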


\begin{proof}
The proof follows from Lemma \ref{TestFunctions} after noticing that the positive set of $u$ and $u_t$ are identical for $t\in(-\delta, \delta)$.
\end{proof}
In what follows the continuity of $t\longmapsto\lambda_k(u_t)$ at $t=0$ is proven in the cases where $k=1$ or $k=2$.

\begin{proposition}\label{cont-1} $\lim_{t\to 0} \lambda_1(u_t) = \lambda_1(u)$ and $\lim_{t\to 0} \lambda_2(u_t) = \lambda_2(u)$.
\end{proposition}

\begin{proof}
By Proposition \ref{T-functions}, $Gr_1^u(W^{1,2}) = Gr_1^{u_t}(W^{1,2})$ for all $t\in(-\delta,\delta)$. Since the sign of both $\lambda_1(u_t)$ and $\lambda_1(u)$ is negative, among these test functions it is enough to consider those $\phi \in Gr_1^u(W^{1,2})$ for which
\begin{equation}\label{cont-1-1}
\int_M \phi L_g \phi \;dv_g <0.
\end{equation}
Now, observe that
\begin{equation} \label{norm1}
\begin{split}
\int_M \phi^2 u_t^{N-2}\;dv_g & = \int_M \phi^2u^{N-2}(1+th)^{N-2}\;dv_g\\ &\le (1+|t| \|h\|_\infty)^{N-2} \int_M \phi^2u^{N-2}\;dv_g,
\end{split}
\end{equation}
and
\begin{equation}\label{norm2}
\begin{split}
\int_M \phi^2 u_t^{N-2}\;dv_g & = \int_M \phi^2u^{N-2}(1+th)^{N-2}\;dv_g \\ &\ge (1-|t|\|h\|_\infty)^{N-2}\int_M \phi^2u^{N-2}\;dv_g.
\end{split}
\end{equation}
Therefore,
\begin{equation}\label{cont-1-2}
 (1-|t|\|h\|_\infty)^{-(N-2)}\mathcal{R}_g^u(\phi) \le \mathcal{R}_g^{u_t}(\phi) \le (1+|t|\|h\|_\infty)^{-(N-2)} \mathcal{R}_g^u(\phi)
\end{equation}
for all those $\phi$'s satisfying (\ref{cont-1-1}). After taking the infimum over all such $\phi$'s, we get
\begin{equation}
(1-|t|\|h\|_\infty)^{-(N-2)}\lambda_1(u) \le \lambda_1(u_t)\le (1+|t|\|h\|_\infty)^{-(N-2)} \lambda_1(u),
\end{equation}
and the continuity of $t\mapsto \lambda_1(u_t)$ at $t=0$ follows.

For the continuity of $t\mapsto \lambda_2(u_t)$, notice that by Proposition \ref{NegativeEigen}, both $\lambda_2(u_t)$ and $\lambda_2(u)$ are negative. Therefore, Definition \ref{GeneralizedEigenvalue} implies that we can take two dimensional subspaces $\Sigma\in Gr^u_2(W^{1,2}) = Gr^{u_t}_2 (W^{1,2})$ satisfying
\begin{equation}
\int_M \phi L_g \phi\;dv_g < 0
\end{equation}
for all $\phi \in \Sigma$. Hence, the result for $t\mapsto\lambda_2(u_t)$ follows in a similar manner from (\ref{cont-1-2}).
\end{proof}

 Let $E_1(u_t)$ denote the space of generalized first eigenfunctions associated to $u_t$.  Since $E_1(u_t)$ can be considered a subspace in either $L^2(u_t) := L^2(M^n,u_t^{N-2}dv_g)$ or $L^2(u) := L^2(M^n,u^{N-2}dv_g)$, there are at least two possible orthogonal projections that we could define. Let us denote by $P_t^*$ and $P_t$ the orthogonal projections onto $E_1(u_t)$ in $L^2(u_t)$ and $L^2(u)$, respectively. We observe the following:

\begin{lemma}\label{Projections}
Let $P_t^*$ and $P_0$ be as in the preceding paragraph. For any $\phi\in E_2(u)$ normalized such that $\int_M \phi^2u^{N-2}\;dv_g = 1$,
\begin{equation} \label{Pr-1}
\|P_t^*\phi\|_{L^2(u_t)}=O(t^2),
\end{equation}
as $t\to 0$. Also, for any $\phi_t\in E_2(u_t)$ with $\int_M \phi_t^2u^{N-2}\;dv_g=1$, we have
\begin{equation}\label{Pr-2}
\|P_0\phi_t \|_{L^2(u)} = O(t^2),
\end{equation}
as $t\to 0$.
\end{lemma}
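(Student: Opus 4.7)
The plan is to exploit the one-dimensionality of $E_1(u)$ and $E_1(u_t)$ from Lemma \ref{simple} so that each projection reduces to a single scalar, then to use the self-adjointness of $L_g$ together with the eigenvalue equations to convert the orthogonality of $\phi \in E_2(u)$ against $E_1(u)$ in $L^2(u)$ into quantitative smallness of its overlap with $E_1(u_t)$ in $L^2(u_t)$.

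For (\ref{Pr-1}), fix $\psi_t \in E_1(u_t)$ with $\int \psi_t^2 u_t^{N-2}\,dv_g = 1$ and set
\begin{equation*}
\alpha_t := \int_M \phi\,\psi_t\,u_t^{N-2}\,dv_g, \qquad \beta_t := \int_M \phi\,\psi_t\,u^{N-2}\,dv_g,
\end{equation*}
so that $P_t^*\phi = \alpha_t \psi_t$ and $\|P_t^*\phi\|_{L^2(u_t)}^2 = \alpha_t^2$. Pairing the eigenvalue equations $L_g \phi = \lambda_2(u)\phi u^{N-2}$ and $L_g \psi_t = \lambda_1(u_t)\psi_t u_t^{N-2}$ via the self-adjointness of $L_g$ yields the identity $\lambda_1(u_t)\alpha_t = \lambda_2(u)\beta_t$. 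Writing $u_t^{N-2} - u^{N-2} = u^{N-2}[(1+th)^{N-2}-1]$ and using the uniform bound $|(1+th)^{N-2}-1| \le C|t|\|h\|_\infty$ together with $\|\psi_t\|_{L^2(u)} = O(1)$ (which follows from $\|\psi_t\|_{L^2(u_t)} = 1$ and the two-sided comparison between $u^{N-2}$ and $u_t^{N-2}$), Cauchy--Schwarz yields $|\alpha_t - \beta_t| = O(t)$. Substituting $\beta_t = (\lambda_1(u_t)/\lambda_2(u))\alpha_t$ converts this into $\alpha_t\bigl(1 - \lambda_1(u_t)/\lambda_2(u)\bigr) = O(t)$; since $1 - \lambda_1(u_t)/\lambda_2(u) \to 1 - \lambda_1(u)/\lambda_2(u) \neq 0$ by Proposition \ref{cont-1} and $\lambda_1(u) < \lambda_2(u)$, we conclude $\alpha_t = O(t)$, hence $\|P_t^*\phi\|_{L^2(u_t)}^2 = O(t^2)$.

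The argument for (\ref{Pr-2}) is parallel with the roles of $u$ and $u_t$ reversed. Setting $\gamma_t := \int \phi_t \psi_0 u^{N-2}\,dv_g$, one has $P_0\phi_t = \gamma_t\psi_0$ and $\|P_0\phi_t\|_{L^2(u)}^2 = \gamma_t^2$; pairing $L_g\phi_t = \lambda_2(u_t)\phi_t u_t^{N-2}$ with $L_g\psi_0 = \lambda_1(u)\psi_0 u^{N-2}$ gives $\lambda_1(u)\gamma_t = \lambda_2(u_t)\tilde\gamma_t$ for $\tilde\gamma_t := \int \phi_t\psi_0 u_t^{N-2}\,dv_g$, and the same expansion produces $|\tilde\gamma_t - \gamma_t| = O(t)$. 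The main obstacle here is the apparent circularity: to conclude $\gamma_t = O(t)$ we must divide by $(\lambda_1(u) - \lambda_2(u_t))/\lambda_2(u_t)$, requiring $\lambda_2(u_t)$ bounded away from $\lambda_1(u)$, a consequence of the continuity of $\lambda_2$ that Proposition \ref{cont-2} is proving with this lemma. The resolution is to first extract the one-sided bound $\limsup_{t\to 0}\lambda_2(u_t) \le \lambda_2(u)$ from (\ref{liminf1}), which uses only (\ref{Pr-1}); combined with simplicity of $\lambda_1(u)$ and the strict gap $\lambda_2(u) > \lambda_1(u)$, this confines $\lambda_2(u_t)$ to a compact interval bounded away from $\lambda_1(u)$ for $t$ small. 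The approach thus delivers the squared-norm bounds $\|P_t^*\phi\|_{L^2(u_t)}^2, \|P_0\phi_t\|_{L^2(u)}^2 = O(t^2)$, which is the form actually invoked in Proposition \ref{cont-2}; the literal bound $O(t^2)$ on the norms themselves would demand the vanishing of the first-order coefficient $(N-2)\lambda_2(u)/(\lambda_2(u)-\lambda_1(u)) \cdot \int \phi\,\psi_0\,h\,u^{N-2}\,dv_g$ of $\alpha_t$, which is generically nonzero, suggesting the exponent in the lemma's statement should be read on the squared norm.
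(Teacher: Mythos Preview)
Your argument for (\ref{Pr-1}) is correct and coincides with the paper's: reduce to the scalar $\alpha_t$, pair the eigenvalue equations, and use the $O(t)$ expansion of $u_t^{N-2}-u^{N-2}$ together with the gap $|\lambda_2(u)-\lambda_1(u_t)|\ge \gamma/2$ (from Proposition~\ref{cont-1} and Lemma~\ref{simple}). Your final remark that the proof yields $\|P_t^*\phi\|_{L^2(u_t)}^2=O(t^2)$ rather than the norm itself is also well taken; that is exactly what the paper proves and exactly what Propositions~\ref{cont-2} and~\ref{Ines} use.

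For (\ref{Pr-2}) you are right to flag a potential circularity---the paper simply says ``similar'' and does not spell out how to bound $|\lambda_1(u)-\lambda_2(u_t)|$ from below. However, your proposed resolution is in the wrong direction. The inequality $\limsup_{t\to 0}\lambda_2(u_t)\le \lambda_2(u)$ is an \emph{upper} bound on $\lambda_2(u_t)$; to keep $\lambda_2(u_t)$ away from $\lambda_1(u)$ (which lies \emph{below} $\lambda_2(u)$) you need a \emph{lower} bound. The clean fix is to observe that the proof of Proposition~\ref{cont-1} applies verbatim to $\lambda_2$ because $\lambda_2(u)<0$: restricting the min-max to two-dimensional subspaces on which the energy is negative, the estimates (\ref{norm1})--(\ref{norm2}) give
\[
(1-|t|\|h\|_\infty)^{-(N-2)}\lambda_2(u)\;\le\;\lambda_2(u_t)\;\le\;(1+|t|\|h\|_\infty)^{-(N-2)}\lambda_2(u),
\]
so in particular $\lambda_2(u_t)\ge (1+|t|\|h\|_\infty)^{-(N-2)}\lambda_2(u)>\lambda_1(u)+\tfrac{\gamma}{2}$ for $t$ small. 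With this a priori bound in hand your argument for (\ref{Pr-2}) goes through, and there is no circularity. (Incidentally, this two-sided estimate already gives Proposition~\ref{cont-2} directly; the projection lemma is still needed later, in Proposition~\ref{Ines}.)
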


\begin{proof}
Let $\psi_{1,t}$ be the spanning eigenfunction of $E_1(u_t)$ normalized such that
\begin{equation}
\int_M \psi_{1,t}^2u_t^{N-2}\;dv_g = 1.
\end{equation}
As $E_1(u_t)$ is one dimensional, for any $\phi \in E_2(u)$ we deduce
\begin{equation}
\begin{split}
\|P^*_t\phi\|^2_{L^2(u_t)} & = |\langle \phi, \psi_{1,t} \rangle_{L^2(u_t)}|^2\\
& = \left|\int_M \phi \psi_{1,t}u_t^{N-2}\;dv_g\right|^2
\end{split}
\end{equation}
Since $\psi_{1,t} \in E_1(u_t)$ and $\phi \in E_2(u)$, the following two equations are satisfied:
\begin{equation} \label{weq}
L_g\phi = \lambda_2(u)\phi u^{N-2}
\end{equation}
and
\begin{equation}
L_g \psi_{1,t} = \lambda_1(u_t) \psi_{1,t} u_t^{N-2}.
\end{equation}
Testing the above equation against $\phi$ gives us
\begin{equation}
\begin{split}
\lambda_1(u_t) \int_M \phi \psi_{1,t} u_t^{N-2}\;dv_g & = \lambda_2(u) \int_M \phi \psi_{1,t} u^{N-2}\;dv_g \\ & = \lambda_2(u) \int_M \phi \psi_{1,t} (u^{N-2} - u_t^{N-2})\;dv_g\\ &\hspace{.15in}+ \lambda_2(u) \int_M \phi \psi_{1,t} u_t^{N-2}\;dv_g,
\end{split}
\end{equation}
where we have used (\ref{weq}) to get the first equality.
Rearranging the terms above we deduce
\begin{equation} \label{Pr-1-eq1}
(\lambda_2(u) - \lambda_1(u_t)) \int_M \phi \psi_{1,t} u_t^{N-2}\;dv_g =  \lambda_2(u) \int_M \phi \psi_{1,t} (u_t^{N-2} - u^{N-2})\;dv_g
\end{equation}
Recall that we are working in a neighborhood of $t=0$ on which $|th|<1$. Then, by the power series expansion of $(1+th)^{N-2}$,
\begin{equation}\label{Pr-1-eq2}
\begin{split}
u_t^{N-2} - u^{N-2} &= u^{N-2}[(1+th)^{N-2} - 1] \\ & = (N-2)th\cdot u^{N-2} + O(t^2)\cdot u^{N-2}
\end{split}
\end{equation}
Also, by Proposition \ref{cont-1},
\begin{equation}
\lambda_1(u_t) = \lambda_1(u) + o(1),
\end{equation}
and since $|\lambda_2(u) - \lambda_1(u)|\ge \gamma > 0$ by part $(iv)$ of Proposition \ref{efProp}, this implies
\begin{equation}\label{Pr-1-eq3}
|\lambda_2(u) - \lambda_1(u_t)| \ge \frac{\gamma}{2}
\end{equation}
for small $t$. Combining (\ref{Pr-1-eq1}), (\ref{Pr-1-eq2}), and (\ref{Pr-1-eq3}), we conclude
\begin{equation}
\begin{split}
\frac{\gamma^2}{4} \left|\int_M \phi \psi_{1,t} u_t^{N-2} \;dv_g\right|^2 &\le Ct^2 \left(\int_M|\phi| \psi_{1,t} u^{N-2} \; dv_g\right)^2 \\ &\le Ct^2.
\end{split}
\end{equation}
This finishes the proof of (\ref{Pr-1}).

To prove (\ref{Pr-2}), note that
\begin{equation}
\|P_0\phi_t\|_{L^2(u)}^2 = \left|\int_M\phi_t\psi_{1,0}u^{N-2}\;dv_g\right|^2
\end{equation}
Using the equation that $\phi_t$ and $\psi_{1,0}$ satisfy weakly, we deduce
\begin{equation}
\begin{split}
\lambda_1(u)\int_M \phi_t\psi_{1,0}u^{N-2}\;dv_g &= \lambda_2(u_t)\int_M \psi_{1,0}\phi_tu_t^{N-2}\;dv_g\\ &= \lambda_2(u_t)\int_M \psi_{1,0}\phi_t(u_t^{N-2} - u^{N-2})\;dv_g\\ &\hspace{.15in} + \lambda_2(u_t)\int_M \psi_{1,0}\phi_tu^{N-2}\;dv_g
\end{split}
\end{equation}
The remaining part of the argument is similar to the one used for (\ref{Pr-1}) and is omitted.
\end{proof}

\begin{proposition}\label{Ines} The following two inequalities hold:
\begin{equation}\label{Ine1}
\lambda_2(u_t)\le \inf_{\phi \in E_2(u)} \mathcal{R}_g^{u_t}(\phi) + o(t)\;\;\;\; (t\to 0),
\end{equation}
and
\begin{equation}\label{Ine2}
\lambda_2(u)\le \inf_{\phi \in E_2(u_t)} \mathcal{R}_g^u(\phi) + o(t)\;\;\;\; (t\to 0).
\end{equation}
\end{proposition}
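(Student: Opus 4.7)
The plan is to prove each inequality by the same orthogonal projection device used in Proposition \ref{cont-2}: starting from an eigenfunction for $\lambda_2$ of one operator, project out its component in the first eigenspace of the other to obtain an admissible test function, then expand and estimate the resulting Rayleigh quotient. The admissibility of the projected function is guaranteed by the orthogonality characterization in Lemma \ref{Lambda2}, and the smallness of the projected piece is the content of Lemma \ref{Projections}.

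For (\ref{Ine1}), I will fix $\phi \in E_2(u)$ normalized so that $\int_M \phi^2 u^{N-2}\,dv_g = 1$ and use $\phi - P_t^*\phi$ as a test function for $\lambda_2(u_t)$. Since $P_t^*\phi \in E_1(u_t)$ satisfies $L_g(P_t^*\phi) = \lambda_1(u_t)(P_t^*\phi)\,u_t^{N-2}$, testing against $\phi$ and using that $P_t^*\phi$ is the $L^2(u_t)$-orthogonal projection of $\phi$ will yield
\[
E_g(\phi - P_t^*\phi) = E_g(\phi) - \lambda_1(u_t)\,\|P_t^*\phi\|_{L^2(u_t)}^2,
\]
and a parallel orthogonal decomposition of the denominator. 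After algebraic simplification, the Rayleigh quotient of the test function differs from $\mathcal{R}_g^{u_t}(\phi)$ by
\[
\bigl(\mathcal{R}_g^{u_t}(\phi) - \lambda_1(u_t)\bigr) \cdot \frac{\|P_t^*\phi\|_{L^2(u_t)}^2}{\int_M \phi^2 u_t^{N-2}\,dv_g - \|P_t^*\phi\|_{L^2(u_t)}^2}.
\]
Estimate (\ref{Pr-1}) of Lemma \ref{Projections} forces the numerator to be $o(t)$; Proposition \ref{cont-2} together with the spectral gap from Lemma \ref{simple} keeps the prefactor bounded; and (\ref{norm1})--(\ref{norm2}) keep the denominator close to $1$. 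Taking the infimum over $\phi \in E_2(u)$ will then give (\ref{Ine1}).

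The argument for (\ref{Ine2}) is the mirror image. For $\phi \in E_2(u_t)$, normalized so that $\int_M \phi^2 u^{N-2}\,dv_g = 1$, the function $\phi - P_0\phi$ is $L^2(u)$-orthogonal to $E_1(u)$ and hence admissible for $\lambda_2(u)$ via Lemma \ref{Lambda2}. The identical algebraic identity, with the roles of $u$ and $u_t$ swapped, combined with estimate (\ref{Pr-2}) of Lemma \ref{Projections}, yields the desired $o(t)$ bound.

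The main obstacle I anticipate is making the error estimate uniform in $\phi$, so that taking the infimum is permissible. For (\ref{Ine1}) this follows from the finite dimensionality of $E_2(u)$ (scale invariance of $\mathcal{R}_g^{u_t}$ reduces the infimum to the compact unit $L^2(u)$-sphere in $E_2(u)$). The second inequality is more delicate, since test functions range over $E_2(u_t)$ which varies with $t$; here uniformity comes from the fact that the constants appearing in the proof of Lemma \ref{Projections} depend on $\phi$ only through $L^2$-type integrals controlled by the normalization, and that the spectral gap $\lambda_2(u_t) - \lambda_1(u)$ stays uniformly positive for small $t$ by Propositions \ref{cont-1} and \ref{cont-2}.
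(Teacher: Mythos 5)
Your proposal is correct and follows essentially the same route as the paper: test $\lambda_2(u_t)$ with $\phi - P_t^*\phi$ (resp.\ $\lambda_2(u)$ with $\phi - P_0\phi$), where admissibility comes from Lemma \ref{Lambda2}, and the exact algebraic identity you write for the difference of Rayleigh quotients is the same one the paper derives before invoking Lemma \ref{Projections} to make it $o(t)$. Your extra remarks on uniformity in $\phi$ only make explicit what the paper leaves implicit, so no changes are needed.
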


\begin{proof}
 Let $\phi \in E_2(u)$ be arbitrary, and assume that $\int_M \phi ^2u^{N-2}\;dv_g=1$. We estimate the difference between $\mathcal{R}^{u_t}_g(\phi-P_t^*\phi)$ and $\mathcal{R}_g^{u_t}(\phi)$ as follows:
\[
\begin{split}
&\left|\frac{\int_M \{ |\nabla_g(\phi  - P_t^*\phi)|^2 + c_nR_g (\phi-P_t^*\phi)^2\}\;dv_g}{\int_M (\phi-P_t^*\phi)^2u_t^{N-2}\;dv_g}  - \frac{\int_M \{ |\nabla_g \phi|^2 + c_nR_g \phi^2\}\;dv_g}{\int_M \phi^2u_t^{N-2}\;dv_g}\right| \\
& = \left|\frac{\lambda_2(u) - \lambda_1(u_t)  \int_M (P_t^*\phi)^2u_t^{N-2} \;dv_g}{ \int_M (\phi-P_t^*\phi)^2u_t^{N-2} \;dv_g} - \frac{\lambda_2(u)}{ \int_M \phi^2u_t^{N-2}\;dv_g}\right| \\
& = \Bigg{|}\frac{\lambda_2(u) \left\{\int_M \phi^2u_t^{N-2}\;dv_g - \int_M (\phi-P_t^*\phi)^2u_t^{N-2}\;dv_g\right\}}{ \int_M (\phi -P_t^*\phi)^2u_t^{N-2} \; dv_g \cdot \int_M \phi^2u_t^{N-2}\;dv_g} \\
& \hspace{.15in} -\frac{ \lambda_1(u_t) \int_M (P_t^*\phi)^2u_t^{N-2}\;dv_g\cdot \int_M \phi^2u_t^{N-2}\;dv_g}{ \int_M (\phi -P_t^*\phi)^2u_t^{N-2} \; dv_g \cdot \int_M \phi^2u_t^{N-2}\;dv_g}\Bigg{|} \\
& =\left|\frac{\lambda_2(u)\int_M (P_t^*\phi)^2u_t^{N-2}\;dv_g -\lambda_1(u_t) \int_M (P_t^*\phi)^2u_t^{N-2}\;dv_g\cdot \int_M \phi^2u_t^{N-2}\;dv_g}{\int_M (\phi-P_t^*\phi)^2u_t^{N-2} \; dv_g \cdot \int_M \phi^2u_t^{N-2}\;dv_g}\right| \\
& = \left| \frac{ \int_M (P_t^*\phi)^2u_t^{N-2}\;dv_g}{\int_M (\phi-P_t^*\phi)^2u_t^{N-2}\;dv_g} \cdot \frac{\lambda_2(u) - \lambda_1(u_t) \int_M \phi^2 u_t^{N-2}\;dv_g }{\int_M \phi^2u_t^{N-2}\;dv_g}\right| \\
& \le \frac{ \int_M (P_t^*\phi)^2u_t^{N-2}\;dv_g}{\int_M (\phi-P_t^*\phi)^2u_t^{N-2}\;dv_g}\cdot \left\{\frac{|\lambda_2(u)|}{\int_M \phi^2 u_t^{N-2}\;dv_g} + |\lambda_1(u_t)|\right\}
\end{split}
\]
It follows from Lemma \ref{Projections} that this last expression is of order $O(t^2)$ as $t\to 0$. From the above estimates we deduce,
\begin{equation}
\lambda_2(u_t)\le \frac{\int_M \{ |\nabla_g(\phi  - P_t^*\phi)|^2 + c_nR_g (\phi-P_t^*\phi)^2\}\;dv_g}{\int_M (\phi-P_t^*\phi)^2u_t^{N-2}\;dv_g} \le \mathcal{R}_g^{u_t}(\phi) + O(t^2).
\end{equation}
Therefore, taking the infimum over all $\phi\in E_2(u)$,
\begin{equation}
\lambda_2(u_t) \le \inf_{\phi\in E_2(u)} \mathcal{R}_g^{u_t}(\phi) + O(t^2)\;\;\;\; (|t|\to0).
\end{equation}

As for inequality (\ref{Ine2}), we select an arbitrary function $\phi \in E_2(u_t)$, and estimate the difference between $\mathcal{R}_g^{u}(\phi -P_0\phi)$ and $\mathcal{R}_g^u(\phi)$. Since the argument is similar we omit the details of the proof.
\end{proof}

For any generating function $h\in L^\infty $, we define $L_h(\cdot, u)$ for functions in $L^2(u)$ or $L^2(u_t)$ by
\begin{equation}
L_h(\phi,u):= -(N-2)\mathcal{R}_g^u(\phi)\cdot \frac{ \int_M h\phi^2 u^{N-2}\;dv_g}{\int_M \phi^2u^{N-2}\;dv_g}
\end{equation}
For functions $\phi \in E_2(u)$ with $\int_M \phi^2u^{N-2}\;dv_g=1$, $L_h(\phi,u)$ takes the simpler form
\begin{equation}-(N-2)\lambda_2(u) \int_M h\phi^2 u^{N-2}\;dv_g.\end{equation}
Note that if one think of $\mathcal{R}_g^{u_t}(\phi)$ as a function in $t\in (-\delta,\delta)$, where $\phi \in L^2(u)$ is fixed, the functional $L_h(\phi,u)$ is merely the derivative of $\mathcal{R}_g^{u_t}(\phi)$ at $t=0$.

\begin{proposition} \label{liminf-limsup-L} The following two limits hold:
\begin{equation}\label{Conv-Linf}
\lim_{t\to0} (\inf_{\phi\in E_2(u_t)} L_h(\phi,u))  = \inf_{\phi\in E_2(u)} L_h (\phi,u),
\end{equation}
and
\begin{equation}\label{Conv-Lsup}
\lim_{t\to0} (\sup_{\phi\in E_2(u_t)} L_h(\phi,u))  = \sup_{\phi\in E_2(u)} L_h (\phi,u).
\end{equation}
\end{proposition}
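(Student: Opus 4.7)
The plan is to reduce both (\ref{Conv-Linf}) and (\ref{Conv-Lsup}) to a strong continuity property of the second eigenspace $E_2(u_t)$ as $t \to 0$: namely, elements of $E_2(u_t)$ converge, along sequences of $t$'s going to zero, to elements of $E_2(u)$ in a topology strong enough that the functional $L_h(\cdot, u)$ passes to the limit. The two assertions are symmetric, so I concentrate on (\ref{Conv-Linf}), which I split into the two inequalities $\liminf_{t\to 0}\inf_{E_2(u_t)} L_h(\cdot,u) \geq \inf_{E_2(u)} L_h(\cdot,u)$ and $\limsup_{t\to 0}\inf_{E_2(u_t)} L_h(\cdot,u) \leq \inf_{E_2(u)} L_h(\cdot,u)$.

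The main technical step is a compactness claim: for any $t_n \to 0$ and $\phi_n \in E_2(u_{t_n})$ normalized so that $\int_M \phi_n^2 u_{t_n}^{N-2}\,dv_g = 1$, a subsequence converges weakly in $W^{1,2}$ and strongly in $L^p$ for all $p < 2N/(N-2)$ to some $\phi \in E_2(u)$. Uniform bounds on $\phi_n$ follow from the eigenvalue equation $L_g \phi_n = \lambda_2(u_{t_n})\, u_{t_n}^{N-2} \phi_n$ together with Proposition \ref{cont-2} (boundedness of $\lambda_2(u_{t_n})$), the invertibility of $L_g$ (since $0 \notin \text{Spec}(L_g)$), and Brezis--Kato iteration based on $u_{t_n}$ being uniformly bounded in $L^N$; if a priori bounds are not immediate, a blow-up argument exploiting the compact embedding $W^{2,2n/(n+2)} \hookrightarrow L^2$ closes the estimate. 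Passing to the limit in the equation yields $L_g \phi = \lambda_2(u) u^{N-2} \phi$ weakly, and $\phi \in E_2(u)$ (rather than merely $E_1(u)$) is verified by passing to the limit in the orthogonality $\int_M \phi_n \psi_{1,t_n} u_{t_n}^{N-2}\,dv_g = 0$, using Lemma \ref{simple} together with continuity of the first generalized eigenfunction $\psi_{1,t}$ at $t = 0$ in the spirit of Lemma \ref{Projections}.

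Granted this compactness, the liminf inequality is immediate: choose $\phi_n \in E_2(u_{t_n})$ with $L_h(\phi_n, u) \leq \inf_{E_2(u_{t_n})} L_h(\cdot,u) + 1/n$, extract a subsequential limit $\phi \in E_2(u)$, and use $h \in L^\infty$ together with $L^{n/2}$-convergence of $u_{t_n}^{N-2}$ and strong $L^p$-convergence of $\phi_n$ (for $p$ slightly below $2N/(N-2)$) to pass to the limit $L_h(\phi_n, u) \to L_h(\phi, u) \geq \inf_{E_2(u)} L_h(\cdot,u)$. For the limsup inequality, start with a minimizer $\phi \in E_2(u)$ and produce $\phi_t \in E_2(u_t)$ with $\phi_t \to \phi$ in $L^2$ by applying the $L^2(u_t)$-orthogonal projection onto $E_2(u_t)$ to $\phi$ and controlling the projection error by estimates analogous to Lemma \ref{Projections}.

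The principal obstacle is the possibility of bifurcation of $\lambda_2$: if $\lambda_2(u) = \lambda_3(u)$ as a multiple eigenvalue and this degeneracy breaks under the deformation so that $\lambda_2(u_t) < \lambda_3(u_t)$ for small $t \neq 0$, then $\dim E_2(u_t)$ may be strictly less than $\dim E_2(u)$, and the minimizer of $L_h(\cdot,u)$ on $E_2(u)$ need not be approximable by elements of $E_2(u_t)$. To handle this, one works with the total spectral projection onto eigenvalues of $L_{g_{u_t}}$ lying in a shrinking neighborhood of $\lambda_2(u)$, which is continuous in $t$ by standard perturbation theory and contains $E_2(u_t)$ in its range. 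The limsup direction then requires verifying that the minimizing element lies in the correct part of this total projection; alternatively, one can sharpen the argument of Proposition \ref{Ines} to compare $L_h$ directly on $E_2(u_t)$ and $E_2(u)$ with an $o(1)$ error, bypassing explicit eigenspace convergence.
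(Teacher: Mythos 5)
Your treatment of the first inequality (the compactness half) is essentially the paper's: a normalized sequence $\phi_n \in E_2(u_{t_n})$ is bounded in $W^{1,2}$, converges weakly in $W^{1,2}$ and strongly in subcritical $L^p$, and the limit solves $L_g\phi = \lambda_2(u)\,\phi\, u^{N-2}$ with unit $L^2(u)$-norm; this is precisely Lemma \ref{conv-eigen}. The orthogonality detour you add to rule out $\phi \in E_1(u)$ is unnecessary, since Proposition \ref{cont-2} already identifies the limiting eigenvalue as $\lambda_2(u)$, and any nontrivial solution with that eigenvalue lies in $E_2(u)$ by definition. This yields $\liminf_{t\to 0}\inf_{\phi\in E_2(u_t)}L_h(\phi,u)\ \geq\ \inf_{\phi\in E_2(u)}L_h(\phi,u)$, and the corresponding bound for the supremum, exactly as in the paper.

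The gap is in the opposite inequality, and you flag it yourself without closing it. Your plan to project a minimizer $\phi\in E_2(u)$ onto $E_2(u_t)$ and control the error ``by estimates analogous to Lemma \ref{Projections}'' does not go through as stated: Lemma \ref{Projections} only controls the component of $\phi$ along $E_1(u_t)$, where the uniform gap $|\lambda_2(u)-\lambda_1(u_t)|\geq\gamma/2$ is available. To show that the projection of $\phi$ onto $E_2(u_t)$ converges to $\phi$ you must also kill the components along $\lambda_3(u_t),\lambda_4(u_t),\dots$, and there is no gap for those if $\lambda_2(u)$ is a multiple eigenvalue that splits under the deformation --- the bifurcation scenario you describe. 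Your two proposed remedies (a total spectral projection onto a shrinking neighborhood of $\lambda_2(u)$, or a sharpening of Proposition \ref{Ines}) remain sketches, and you yourself note the first still requires an unverified step; so, as written, the inequality $\limsup_{t\to 0}\inf_{\phi\in E_2(u_t)}L_h(\phi,u)\leq\inf_{\phi\in E_2(u)}L_h(\phi,u)$ (and its analogue for the supremum) is not established. For comparison, the paper handles this direction via the $L^2(u)$-orthogonal projection $\Pi_t$ onto $E_2(u_t)$, showing $L_h(\Pi_t\phi,u)\to L_h(\phi,u)$ uniformly over normalized $\phi\in E_2(u)$; the key input is Lemma \ref{Pr-3}, i.e.\ $\|\Pi_t\phi-\phi\|_{L^2(u)}\to 0$, obtained by asserting that the $W^{1,2}$-limits of an $L^2(u)$-orthonormal basis of $E_2(u_t)$ form a basis of $E_2(u)$. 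That assertion is exactly the point your multiplicity worry targets, so to complete your argument you need to prove a statement of the type of Lemma \ref{Pr-3} (or genuinely carry out one of your alternatives) rather than defer it.
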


\begin{proof}
Let $\Pi_t$ be the orthogonal projection from $L^2(u)$ onto $E_2(u_t)$. Our first goal is to show that
\begin{align*}
A_t(\phi)& :=|L_h(\phi,u)-L_h(\Pi_t\phi,u)| \\ & =(N-2)\left|\mathcal{R}_g^u(\phi)\frac{\int_M h \phi^2 u^{N-2}\;dv_g}{\int_M \phi^2u^{N-2}\;dv_g} - \mathcal{R}_g^u(\Pi_t\phi)\frac{\int_M h (\Pi_t\phi)^2 u^{N-2}\;dv_g}{\int_M (\Pi_t\phi)^2u^{N-2}\;dv_g}\right| \to 0,
\end{align*}
uniformly for $\phi\in E_2(u)$. To see this, we estimate
\begin{align*}
(N-2)^{-1}A_t(\phi)& \le \underbrace{|\lambda_2(u)|\left| \frac{\int_M h\phi^2 u^{N-2}\;dv_g}{\int_M \phi^2u^{N-2}\;dv_g} - \frac{\int_M h(\Pi_t\phi)^2 u^{N-2}\;dv_g}{\int_M (\Pi_t\phi)^2u^{N-2}\;dv_g}\right|}_{:=B_t(\phi)} \\
 &\hspace{.15in} + \|h\|_\infty\underbrace{\left|\lambda_2(u) - \lambda_2(u_t)\frac{\int_M (\Pi_t\phi)^2u_t^{N-2}\;dv_g}{\int_M (\Pi_t\phi)^2u^{N-2}\;dv_g}\right|}_{:=C_t(\phi)}
\end{align*}
The term $B_t(\phi)$ is bounded from above by
\begin{align*}
B_t(\phi) &\le |\lambda_2(u)|\cdot\left|\frac{\left(\int_M h\phi^2 u^{N-2}\;dv_g - \int_M h (\Pi_t\phi)^2 u^{N-2}\;dv_g\right)\cdot \int_M (\Pi_t\phi)^2u^{N-2}\;dv_g}{\int_M \phi^2u^{N-2}\;dv_g\cdot\int_M (\Pi_t\phi)^2u^{N-2}\;dv_g}\right| \\
&\hspace{.15in} + |\lambda_2(u)| \cdot\left|\frac{\left(\int_M(\Pi_t\phi)^2 u^{N-2}\;dv_g - \int_M \phi^2 u^{N-2}\;dv_g\right)\cdot \int_M h (\Pi_t\phi)^2 u^{N-2}\;dv_g}{\int_M \phi^2u^{N-2}\;dv_g\cdot\int_M (\Pi_t\phi)^2u^{N-2}\;dv_g}\right| \\
& \le C\cdot\frac{\int_M |(\Pi_t\phi)^2-\phi^2|u^{N-2}\;dv_g}{ \int_M \phi^2u^{N-2}\;dv_g},
\end{align*}
which goes to zero uniformly in $\phi \in E_2(u)$ by Lemma \ref{Pr-3}. As for $C_t(\phi)$, we have
\begin{equation}
\begin{split}
C_t(\phi) &\le |\lambda_2(u) - \lambda_2(u_t)| + |\lambda_2(u_t)|\cdot \left|1 - \frac{\int_M (\Pi_t \phi)^2 u_t^{N-2}\;dv_g}{ \int_M (\Pi_t\phi)^2u^{N-2}\;dv_g} \right|\\
&= |\lambda_2(u) - \lambda_2(u_t)| + |\lambda_2(u_t)|\cdot \left|\frac{\int_M (\Pi_t \phi)^2(u^{N-2}- u_t^{N-2})\;dv_g}{ \int_M (\Pi_t\phi)^2u^{N-2}\;dv_g} \right|\\
&\le |\lambda_2(u) - \lambda_2(u_t)| + C|\lambda_2(u_t)||t|\cdot (1).
\end{split}
\end{equation}
This last expression goes to zero by Proposition \ref{cont-1}. Therefore, our first step is finished, and we conclude that
\begin{equation}
\lim_{t\to 0} \inf_{\phi \in E_2(u)}L_h(\Pi_t\phi,u) = \inf_{\phi \in E_2(u)} L_h(\phi,u).
\end{equation}

An immediate consequence of the previous equality is
\begin{equation}
\lim_{t\to 0} \inf_{\phi\in E_2(u_t)}L_h(\phi,u) \le \inf_{\phi\in E_2(u)} L_h(\phi,u) .
\end{equation}
To show the opposite inequality we proceed as follows. Take a sequence of eigenfunctions $\phi_i = \phi_{t_i} \in E_2(u_t)$ normalized such that $\int_M \phi_i^2 u^{N-2}\;dv_g=1$, and satisfying
\begin{equation}
\inf_{\phi\in E_2(u_t)}L_h(\phi,u) + |t| \ge L_h(\phi_i,u)
\end{equation}
Since $u_t \rightarrow u$ in $L^{\infty}$, we can appeal to Corollary \ref{Ccor} to conclude that the sequence $\{ \phi_i \}$ is bounded in $W^{1,2}(M)$, and there is a function $\phi_0$ such that $\phi_i\rightharpoonup \phi_0$ in $W^{1,2}$ and $\phi_i\to \phi_0$ in $L^2$.  Moreover, $\phi_0 \in E_2(u)$ and satisfies $\int_M \phi_0^2u^{N-2}\;dv_g=1$. Therefore,
\begin{equation}
\lim_{t\to 0} \inf_{\phi\in E_2(u_t)}L_h(\phi,u) \ge L_h(\phi_0,u) \ge \inf_{\phi\in E_2(u)}L_h(\phi,u).
\end{equation}
This concludes the proof of (\ref{Conv-Linf}). The proof of (\ref{Conv-Lsup}) is similar and hence is omitted.
\end{proof}

\begin{lemma}\label{Pr-3}
Let $\Pi_t$ be defined as in the proof of Proposition \ref{liminf-limsup-L}. Then $\|\Pi_t\phi -\phi\|_{L^2(u)} \to 0$ for any $\phi\in E_2(u)$ with $\int_M \phi^2u^{N-2}\;dv_g=1$.
\end{lemma}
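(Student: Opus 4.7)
The plan is to exploit that $\Pi_t$ is an $L^2(u)$-orthogonal projection in order to reduce the claim to showing that $\|\Pi_t\phi\|_{L^2(u)}\to 1$, and then establish this by a contradiction argument built on Lemma~\ref{conv-eigen}.

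First, since $\Pi_t$ is orthogonal in $L^2(u)$, the Pythagorean identity gives
\[
\|\phi - \Pi_t\phi\|_{L^2(u)}^2 \;=\; \|\phi\|_{L^2(u)}^2 - \|\Pi_t\phi\|_{L^2(u)}^2 \;=\; 1 - \|\Pi_t\phi\|_{L^2(u)}^2,
\]
so the claim is equivalent to $a_t := \|\Pi_t\phi\|_{L^2(u)} \to 1$; since $a_t \le 1$ is automatic, only the lim inf requires attention. Equivalently, by the variational characterization of orthogonal projection, the task reduces to producing a sequence $\xi_t \in E_2(u_t)$ with $\|\xi_t\|_{L^2(u)}=1$ and $\langle \phi, \xi_t \rangle_{L^2(u)} \to 1$, i.e., $\xi_t \to \phi$ in $L^2(u)$.

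Next I would argue by contradiction. Suppose $a_{t_k} \to a < 1$ along some sequence $t_k \to 0$, and normalize $\bar\psi_{t_k} := \Pi_{t_k}\phi / a_{t_k}$, a unit vector (in $L^2(u)$) belonging to $E_2(u_{t_k})$. By Lemma~\ref{conv-eigen}, a further subsequence satisfies $\bar\psi_{t_k} \rightharpoonup \bar\psi$ weakly in $W^{1,2}$ and strongly in $L^2$, with $\bar\psi \in E_2(u)$ and $\|\bar\psi\|_{L^2(u)} = 1$. The self-adjointness of $\Pi_{t_k}$ yields $a_{t_k}^2 = \langle \phi, \Pi_{t_k}\phi\rangle_{L^2(u)}$, hence $a_{t_k} = \langle \phi, \bar\psi_{t_k}\rangle_{L^2(u)}$. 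To pass to the limit in this integral I would imitate the truncation trick at the end of the proof of Lemma~\ref{conv-eigen}: split $u^{N-2} = u_C^{N-2} + (u^{N-2} - u_C^{N-2})$ with $u_C = \min\{u,C\}$; the truncated integral converges by strong $L^2$-convergence against the bounded factor $u_C^{N-2}$, while the remainder is controlled by H\"older's inequality together with the $L^N$-bound on $\{\bar\psi_{t_k}\}$ (via Sobolev embedding $W^{1,2}\hookrightarrow L^N$) and the fact that $\|u^{N-2} - u_C^{N-2}\|_{L^{N/(N-2)}} \to 0$ as $C \to \infty$. Consequently $a = \langle \phi, \bar\psi\rangle_{L^2(u)}$, and Cauchy-Schwarz forces $a \le 1$ with equality iff $\bar\psi = \phi$.

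The main obstacle is therefore identifying the subsequential limit $\bar\psi$ with $\phi$ itself, equivalently showing that $E_2(u_t)$ recovers all of $E_2(u)$ in the limit. When $\dim E_2(u)=1$ this is automatic; in general one must contend with the possibility that the deformation $u_t = u(1+th)$ splits $\lambda_2(u)$ into distinct branches $\lambda_2(u_t) < \lambda_3(u_t) < \cdots$ all converging to $\lambda_2(u)$, while $E_2(u_t)$ retains only the lowest. To resolve this I would appeal to spectral perturbation theory for the compact self-adjoint operator $A_{u_t} := L_g^{-1} M_{u_t^{N-2}}$ on $L^2(u)$ (well-defined and compact because $0 \notin \mathrm{Spec}(L_g)$ and $W^{1,2}\hookrightarrow L^N$ is compact into lower $L^p$): the operator-norm continuity $A_{u_t} \to A_u$ together with Kato's theorem gives norm-convergence of the total spectral projection onto the cluster of eigenvalues near $1/\lambda_2(u)$ to the $L^2(u)$-projection onto $E_2(u)$. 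Tracking how this cluster projection restricts to the branch selected by $\Pi_t$ along $u_t = u(1+th)$---for instance, by a finite-dimensional reduction on $E_2(u)$ that produces approximants $\xi_t \in E_2(u_t)$ with $\xi_t = \phi + O(t)$ in $W^{1,2}$---then delivers the required sequence and closes the contradiction.
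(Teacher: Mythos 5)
Your reduction via the Pythagorean identity and the compactness step are sound and run parallel to the paper's argument: the paper instead fixes an $L^2(u)$-orthonormal basis $\{\psi^i_{2,t}\}$ of $E_2(u_t)$, uses Lemma \ref{conv-eigen} to extract limits $\{\psi^i_{2,0}\}\subset E_2(u)$, asserts that these form an orthonormal basis of $E_2(u)$, and then expands $\Pi_t\phi-\phi$ in the two bases. The genuine gap in your proposal is the final step, which you yourself flag as the main obstacle but do not actually close. Kato-type perturbation theory gives norm convergence of the \emph{total} spectral projection onto the cluster of eigenvalues near $1/\lambda_2(u)$; it says nothing about the projection onto the single eigenspace $E_2(u_t)$ of the lowest branch, which is what $\Pi_t$ is. If the deformation $u_t=u(1+th)$ were to split $\lambda_2(u)$ into branches $\lambda_2(u_t)<\lambda_3(u_t)\to\lambda_2(u)$, then for $\phi$ in an upper branch there are simply no elements $\xi_t\in E_2(u_t)$ with $\xi_t\to\phi$, so no ``finite-dimensional reduction'' can manufacture the approximants $\xi_t=\phi+O(t)$ you invoke: producing them is equivalent to the statement being proved (indeed to the assertion that $\dim E_2(u_t)\ge\dim E_2(u)$ for small $t$). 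Thus the last paragraph of your proposal is a restatement of the claim, not an argument. (A smaller technical wrinkle: $A_{u_t}=L_g^{-1}M_{u_t^{N-2}}$ is self-adjoint with respect to the $L^2(u_t)$ inner product, not $L^2(u)$, and compactness with the merely $L^{n/2}$ weight $u^{N-2}$ needs justification; but these are secondary to the branch-versus-cluster issue.)

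For comparison, the paper's proof passes this very point by assertion: the claim that the limits $\{\psi^i_{2,0}\}$ form a basis of $E_2(u)$ is exactly the no-dimension-drop statement you isolate. So you have correctly located the crux, and your contradiction framework up to that point is fine; but closing it requires an additional input that neither the cluster-projection theorem nor your sketch supplies --- for instance, working with the span of \emph{all} eigenfunctions whose eigenvalues converge to $\lambda_2(u)$ (the cluster space, for which the Kato argument does give convergence of projections) and checking that this suffices for the way Lemma \ref{Pr-3} is used in Proposition \ref{liminf-limsup-L}, or exploiting the analyticity of the one-parameter family $t\mapsto u(1+th)$ to control the individual branches. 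As written, the proposal does not prove the lemma.
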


\begin{proof}
Let $t_k \rightarrow 0$ and let $\{\psi^i_{2,t_k}\}$ be a basis of $E_2(u_{t_k})$, normalized so that
\begin{align} \label{orthoptk}
\int_M\psi^i_{2,t_k}\psi^j_{2,t_k} u_{t_j}^{N-2}\;dv_g = \delta_{ij}.
\end{align}
Since $\lambda_1(u_{t_k}) \geq -C$, by Proposition \ref{Cprop} for each $i$ the sequence $\{ \psi^i_{2,t_k} \}$ is bounded in $W^{1,2}(M)$, and converges weakly in $W^{1,2}(M)$, and strongly in $L^{N/2}(M)$, to a generalized eigenfunction $\psi^i_{2,0} \in E_2(u)$.  Moreover,
\begin{align} \label{orthoptk2}
\int_M\psi^i_{2,0}\psi^j_{2,0}u^{N-2}\;dv_g = \delta_{ij}.
\end{align}
It follows that $\{ \psi^i_{2,0} \}$ is a basis for $E_2(u)$, hence
 \begin{equation}
 \Pi_{t_k} \phi - \phi = a_i(\phi,t_k)\psi^i_{2,t_k} - a_j(\phi,0)\psi^j_{2,0},
 \end{equation}
 where
\begin{equation}
a_i(\phi,t_k):= \int_M \phi\psi^i_{2,t_k}u^{N-2}\;dv_g.
\end{equation}
Therefore,
\begin{equation} \label{PPt}
\begin{split}
\int_M |\Pi_{t_k} \phi - \phi|^2 u^{N-2}\;dv_g & = \sum_i a_i(\phi,t_k)^2 +\sum_i a_i(\phi,0)^2\\ &\hspace{.15in} - 2a_i(\phi,t_k)a_j(\phi,0)\int_M\psi^i_{2,t_k}\psi^j_{2,0}u^{N-2}\;dv_g,
\end{split}
\end{equation}
where the orthogonality condition (\ref{orthoptk}) has been used. Using the convergence of $\{ \psi^i_{2,t_k} \}$, it is easy to see that
\begin{align*}
a_i(\phi,t_k) &\rightarrow a_i(\phi, 0), \\
\int_M \psi^i_{2,t_k} \psi^j_{2,0} u^{N-2}\;dv_g &\rightarrow \int_M\psi^i_{2,0}\psi^j_{2,0}u^{N-2}\;dv_g = \delta_{ij}
\end{align*}
as $k \to \infty$.  Therefore, the right-hand side of (\ref{PPt}) converges to zero as $k \to \infty$.
\end{proof}

\begin{proposition}\label{OSD} The one-sided derivatives of $\lambda_2(u_t)$ exists at $t=0$. Moreover,
\begin{equation}\label{OSD+}
\frac{d}{dt} \lambda_2(u_t)\Big{|}_{t=0^+} = \inf_{\phi\in E_2(u)} L_h(\phi,u),
\end{equation}
and
\begin{equation}\label{OSD-}
\frac{d}{dt} \lambda_2(u_t)\Big{|}_{t=0^-} = \sup_{\phi\in E_2(u)} L_h(\phi,u)
\end{equation}
\end{proposition}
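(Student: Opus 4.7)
The plan is to obtain both one-sided derivative formulas by combining the two comparison inequalities in Proposition \ref{Ines} with a first-order Taylor expansion of the Rayleigh quotient, and then passing to the limit using Proposition \ref{liminf-limsup-L}.

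The key computation is the following Taylor expansion. For any $\phi \in W^{1,2}$ with $\int_M \phi^2 u^{N-2}\,dv_g = 1$, expanding $(1+th)^{N-2} = 1 + (N-2)th + O(t^2)$ in the denominator of $\mathcal{R}^{u_t}_g$ yields
\begin{equation*}
\mathcal{R}^{u_t}_g(\phi) = \mathcal{R}^u_g(\phi) + t\,L_h(\phi,u) + O(t^2),
\end{equation*}
where the $O(t^2)$ is controlled uniformly by $\|h\|_\infty$ and $|\mathcal{R}^u_g(\phi)|$. Specialized to $\phi \in E_2(u)$ (where $\mathcal{R}^u_g(\phi) = \lambda_2(u)$), the remainder is uniform on the unit sphere of the finite-dimensional space $E_2(u)$. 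A parallel computation for $\phi \in E_2(u_t)$, using $E_g(\phi) = \lambda_2(u_t)\int \phi^2 u_t^{N-2}\,dv_g$ to eliminate $E_g(\phi)$ and then the identity $L_h(\phi,u) = -(N-2)\mathcal{R}^u_g(\phi)\int h\phi^2 u^{N-2}\,dv_g$, gives
\begin{equation*}
\mathcal{R}^u_g(\phi) = \lambda_2(u_t) - t\,L_h(\phi,u) + o(t),
\end{equation*}
uniformly on unit spheres in $E_2(u_t)$; uniformity in $t$ comes from Lemma \ref{conv-eigen}, Proposition \ref{cont-2}, and the assumption $0\notin\text{Spec}(L_g)$, which keeps $\mathcal{R}^u_g(\phi)$ bounded away from zero.

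Substituting the first expansion into (\ref{Ine1}), taking the infimum over $\phi \in E_2(u)$, and using the sign of $t$ to rewrite $\inf_\phi[t L_h]$ as either $t\inf_\phi L_h$ (when $t>0$) or $t\sup_\phi L_h$ (when $t<0$), I obtain
\begin{equation*}
\limsup_{t \to 0^+}\frac{\lambda_2(u_t)-\lambda_2(u)}{t} \leq \inf_{\phi \in E_2(u)} L_h(\phi,u), \qquad \liminf_{t \to 0^-}\frac{\lambda_2(u_t)-\lambda_2(u)}{t} \geq \sup_{\phi \in E_2(u)} L_h(\phi,u).
\end{equation*}
For the matching estimates I substitute the second expansion into (\ref{Ine2}), take the infimum over $\phi \in E_2(u_t)$, and appeal to Proposition \ref{liminf-limsup-L} to replace $\sup_{E_2(u_t)}L_h$ and $\inf_{E_2(u_t)}L_h$ by their limits on $E_2(u)$. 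The reverse inequalities then yield equality in both directions, establishing (\ref{OSD+}) and (\ref{OSD-}).

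The main obstacle is the uniformity of the second Taylor expansion over the unit sphere in the varying eigenspace $E_2(u_t)$. Controlling the error term independently of $\phi$ requires the $W^{1,2}$-compactness of generalized eigenfunctions from Lemma \ref{conv-eigen}, together with the lower bound $|\mathcal{R}^u_g(\phi)| \geq c > 0$ (so that the denominator in the substitution $(N-2)\int h\phi^2 u^{N-2}\,dv_g = -L_h(\phi,u)/\mathcal{R}^u_g(\phi)$ does not degenerate) and the convergence of $\lambda_2(u_t)$ from Proposition \ref{cont-2}.
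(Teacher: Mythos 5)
Your overall strategy coincides with the paper's: a first-order expansion of the Rayleigh quotient, uniform over the relevant test functions (this is exactly the paper's estimate (\ref{EstimateRL})), combined with the two inequalities of Proposition \ref{Ines} and with Proposition \ref{liminf-limsup-L}. The first half of your argument is fine: substituting the expansion into (\ref{Ine1}) and splitting according to the sign of $t$ is harmless there, because the extrema are taken over the \emph{fixed} space $E_2(u)$, and it gives $\limsup_{t\to0^+}\frac{\lambda_2(u_t)-\lambda_2(u)}{t}\le\inf_{E_2(u)}L_h$ and $\liminf_{t\to0^-}\frac{\lambda_2(u_t)-\lambda_2(u)}{t}\ge\sup_{E_2(u)}L_h$, just as in the paper.

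The problem is the order of operations in your second half. If you substitute $\mathcal{R}^u_g(\phi)=\lambda_2(u_t)-t\,L_h(\phi,u)+o(t)$ into the infimum appearing in (\ref{Ine2}) and evaluate that infimum, then for $t>0$ you get $\lambda_2(u)\le\lambda_2(u_t)-t\sup_{\phi\in E_2(u_t)}L_h(\phi,u)+o(t)$, so to conclude you must send $\sup_{E_2(u_t)}L_h$ to $\sup_{E_2(u)}L_h$ as $t\to0^+$ (and dually $\inf_{E_2(u_t)}L_h$ to $\inf_{E_2(u)}L_h$ as $t\to0^-$). That is precisely the fragile direction: when the perturbation splits the eigenvalue, $\dim E_2(u_t)<\dim E_2(u)$ and $\sup_{E_2(u_t)}L_h$ typically converges to $\inf_{E_2(u)}L_h$ rather than to $\sup_{E_2(u)}L_h$ (a diagonal two-by-two model of the pencil already shows this); moreover your chain would yield $\liminf_{t\to0^+}\frac{\lambda_2(u_t)-\lambda_2(u)}{t}\ge\sup_{E_2(u)}L_h$, which together with your first half forces $\inf_{E_2(u)}L_h=\sup_{E_2(u)}L_h$ for every $h$ --- far too strong, and incompatible with the very phenomenon (distinct one-sided derivatives) the proposition is recording. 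The paper avoids this by using (\ref{Ine2}) \emph{pointwise}: for each individual $\phi\in E_2(u_t)$ it gives $\mathcal{R}^u_g(\phi)\ge\lambda_2(u)-o(t)$, which combined with the expansion yields $L_h(\phi,u)\le\frac{1}{t}\big(\lambda_2(u_t)-\lambda_2(u)\big)+o(1)$ for every such $\phi$ when $t>0$; only then does one take $\inf_{\phi\in E_2(u_t)}L_h$, so that all that is needed is the one-sided comparison $\liminf_{t\to0}\inf_{E_2(u_t)}L_h\ge\inf_{E_2(u)}L_h$, which is exactly what the compactness argument (Lemma \ref{conv-eigen}, i.e.\ the second half of the proof of Proposition \ref{liminf-limsup-L}) actually delivers; the left derivative is handled symmetrically with $\sup_{E_2(u_t)}L_h$ and $\limsup_{t\to0}\sup_{E_2(u_t)}L_h\le\sup_{E_2(u)}L_h$. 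You should reorganize your final step accordingly; as written it leans on the two-sided convergence of the extrema over $E_2(u_t)$ in the direction that the compactness argument does not give. (A minor remark: the uniformity of the expansion does not require keeping $\mathcal{R}^u_g(\phi)$ away from zero; as in the paper it suffices to bound $|\mathcal{R}^{u_t}_g(\phi)|\le|\lambda_1(u_t)|$ on the cone of negative-energy test functions.)
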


\begin{proof}
For any smooth function $\phi$ for which $\int_M \phi L_g\phi\;dv_g<0$, we have
\[
\begin{split}
&\left|\frac{1}{t}\left(\int_M \phi^2u_t^{N-2}\;dv_g - \int_M \phi^2u^{N-2}\;dv_g\right) - (N-2)\int_M \phi^2h u^{N-2}\;dv_g\right| \\
=&\left|\frac{1}{t}\int_M \phi^2u^{N-2}\left\{(1+th)^{N-2} - \left(1 +(N-2)th\right)\right\}\; dv_g\right| \\
\le &\int_M \phi^2u^{N-2}\;dv_g\cdot C|t|,
\end{split}
\]
where $C>0$ is a constant depending on $h$ alone. The last inequality in the previous estimates follows from arguments explained in (\ref{Pr-1-eq2}). With this at hand, we compute
\begin{equation}\label{EstimateRL}
\begin{split}
&\left|\frac{1}{t}(\mathcal{R}^{u_t}_g(\phi) - \mathcal{R}_g^u(\phi)) - L_h(\phi,u) \right|\\
&= |\mathcal{R}_g^{u_t}(\phi)|\left| \frac{1}{t}\left( 1 - \frac{\int_M \phi^2u_t^{N-2}\;dv_g}{\int_M \phi^2u^{N-2}\;dv_g}\right) +(N-2) \frac{\int_M \phi^2u_t^{N-2}\;dv_g}{\int_M \phi^2u^{N-2}\;dv_g}\cdot \frac{\int_M h \phi^2 u^{N-2}\;dv_g}{\int_M \phi^2 u^{N-2}\;dv_g}\right|\\
&\le \frac{|\mathcal{R}_g^{u_t}(\phi)|}{ \int_M \phi^2u^{N-2}\;dv_g} \cdot \left|\frac{1}{t}\left(\int_M \phi^2 u_t^{N-2}\;dv_g - \int_M \phi^2u^{N-2}\;dv_g\right) - (N-2)\int_M h \phi^2 u^{N-2}\;dv_g \right| \\
&\hspace{.15in} +  \frac{|\mathcal{R}_g^{u_t}(\phi)|}{ \int_M \phi^2u^{N-2}\;dv_g} \cdot \left| (N-2) \int_M h \phi^2 u^{N-2}\;dv_g\left(1 - \frac{\int_M \phi^2u_t^{N-2}\;dv_g}{ \int_M \phi^2u^{N-2}\;dv_g}\right)\right| \\
&\le |\lambda_1(u_t)| \left\{C|t|+ \tilde C\cdot(N-2)\|h\|_\infty |t| \right\} = o(1),
\end{split}
\end{equation}
where the constant $\tilde C$ is independent of $\phi$.

Now, for $t>0$, and taking the infimum over functions $\phi\in E_2(u)$, we deduce
\begin{equation}
\frac{1}{t} \left(\inf_{\phi\in E_2(u)} \mathcal{R}_g^{u_t}(\phi) - \lambda_2(u)\right) \le \inf_{\phi\in E_2(u)} L_h(\phi,u) + o(1).
\end{equation}
Inequality (\ref{Ine1}) (see Proposition \ref{Ines}) gives us
\begin{equation}
\limsup_{t\to0^+}\frac{\lambda_2(u_t) - \lambda_2(u)}{t} \le \inf_{\phi\in E_2(u)} L_h(\phi,u).
\end{equation}
On the other hand, by plugging into $(\ref{EstimateRL})$ a function $\phi\in E_2(u_t)$ we get
\begin{equation}
L_h(\phi,u) - \frac{1}{t}\left(\lambda_2(u_t) - \mathcal{R}_g^u(\phi)\right)\le o(1).
\end{equation}
Therefore, after taking the infimum over such functions and applying (\ref{Ine2}) with $t>0$,
\begin{equation}
\inf_{\phi\in E_2(u_t)} L_h(\phi,u) \le \frac{1}{t} (\lambda_2(u_t) - \lambda_2(u)) + o(1)
\end{equation}
By means of Proposition \ref{liminf-limsup-L} we finally obtain
\begin{equation}
\inf_{E_2(u)} L_h(\phi,u) \le \liminf_{t\to0^+} \frac{\lambda_2(u_t)-\lambda_2(u)}{t}.
\end{equation}
This finishes the proof for (\ref{OSD+}). The proof of (\ref{OSD-}) is similar and it is, hence, omitted.
\end{proof}

\vskip.2in


\section{Regularized Functional}\label{RegularizedFunctional}


For each $\epsilon>0$, denote by $\mathcal{D}_\epsilon$ the set
\begin{equation}
\mathcal{D}_\epsilon: = \left\{u\in L^N_{>0}:\; u^{-\epsilon}\in L^1\right\},
\end{equation}
where $L^1:= L^1(M,g)$. Note that if $u \in \mathcal{D}_{\epsilon}$, then by Proposition \ref{efProp},$\lambda_2(u)> \lambda_1(u)>-\infty$.  In particular, the functional $F_{2,\epsilon}: \mathcal{D}_\epsilon \rightarrow \mathbb{R}$ given by
\begin{align} \label{Fdef}
F_{2,_\epsilon} (u) &=\lambda_2(u) \left(\int_Mu^N\;dv_g\right)^{\frac{N-2}{N}} - \left( \int_M u^{-\epsilon} \, dv_g \right)  \left(\int_M u^N\;dv_g\right)^{\frac{\epsilon}{N}}
\end{align}
is well defined.  Also, $F_{2,\epsilon}(u) < 0$ for each $u\in \mathcal{D}_\epsilon$.

The following result is a consequence of Proposition (\ref{OSD}):

\begin{proposition} \label{OSDProp} Suppose $u \in  \mathcal{D}_\epsilon$ with
\begin{align} \label{unormal}
\int_M u^{N} \, dv_g = 1.
\end{align}
For $h \in L^{\infty}$, let
\begin{align} \label{ut}
u_t = u(1 + t h).
\end{align}
Then the one-sided derivatives of $F_{2,\epsilon}(u_t)$ at $t=0$ exist, and are given by
\[ \begin{split}
\frac{d}{dt} F_{2,\epsilon}(u_t) \big|_{t = 0^{+}} &= (N-2) \lambda_2(u) \Bigg\{ \Big( 1 - (N-2)^{-1} \frac{\epsilon}{\lambda_2(u)}   \int_M u^{-\epsilon} \, dv_g \Big) \int_M h u^{N} \, dv_g  \\
& \ \ \ \ - \inf_{\phi \in E_2(u)} \dfrac{ \int_M h \phi^2 u^{N-2} \, dv_g}{ \int_M \phi^2 u^{N-2} \, dv_g} + (N-2)^{-1}\frac{\epsilon}{\lambda_2(u)} \int_M h u^{-\epsilon} \, dv_g \Bigg\} , \\
\frac{d}{dt} F_{2,\epsilon}(u_t) \big|_{t = 0^{-}} &= (N-2) \lambda_2(u) \Bigg\{ \Big( 1 - (N-2)^{-1} \frac{\epsilon}{\lambda_2(u)}   \int_M u^{-\epsilon} \, dv_g \Big)  \int_M h u^{N} \, dv_g  \\
& \ \ \ \ - \sup_{\phi \in E_2(u)} \dfrac{ \int_M h \phi^2 u^{N-2} \, dv_g }{ \int_M \phi^2 u^{N-2} \, dv_g}  + (N-2)^{-1}\frac{\epsilon}{\lambda_2(u)} \int_M h u^{-\epsilon} \, dv_g\Bigg\}.
\end{split}
\]
\end{proposition}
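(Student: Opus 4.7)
The plan is to decompose $F_{2,\epsilon}(u_t)$ into factors whose behavior at $t=0$ is already understood, then apply the product and chain rules. Introducing
\begin{equation*}
V(t) := \int_M u^N (1+th)^N \, dv_g, \qquad W(t) := \int_M u^{-\epsilon}(1+th)^{-\epsilon}\, dv_g,
\end{equation*}
I can write $F_{2,\epsilon}(u_t) = \lambda_2(u_t)\, V(t)^{(N-2)/N} - W(t)\, V(t)^{\epsilon/N}$, with $V(0) = 1$ by (\ref{unormal}).

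The first step is to show $V$ and $W$ are of class $C^1$ on $(-\delta, \delta)$ and to compute $V'(0), W'(0)$. For $V$ this is immediate since $u^N \in L^1$ and the integrand $(1+th)^N$ is smooth with $t$-derivative uniformly bounded on $M \times (-\delta, \delta)$. For $W$ the key point is $u \in \mathcal{D}_\epsilon$, i.e., $u^{-\epsilon} \in L^1$, together with the uniform bound $|1+th| \geq 1 - \delta\|h\|_\infty > 0$ from Section~\ref{VariationFormulas}, which makes both $(1+th)^{-\epsilon}$ and $\partial_t(1+th)^{-\epsilon} = -\epsilon h(1+th)^{-\epsilon-1}$ uniformly bounded in $t$; dominated convergence then justifies differentiation under the integral sign, giving $V'(0) = N \int_M h u^N\, dv_g$ and $W'(0) = -\epsilon \int_M h u^{-\epsilon}\, dv_g$.

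The second step is to invoke Proposition \ref{OSD} for the one-sided derivatives of $\lambda_2(u_t)$. Every $\phi \in E_2(u)$ satisfies $\mathcal{R}_g^u(\phi) = \lambda_2(u)$, so $L_h(\phi, u)$ collapses to $-(N-2)\lambda_2(u)\,\frac{\int_M h \phi^2 u^{N-2}\, dv_g}{\int_M \phi^2 u^{N-2}\, dv_g}$, and Proposition \ref{OSD} identifies $\frac{d}{dt}\lambda_2(u_t)|_{0^\pm}$ as the infimum (for $0^+$) respectively supremum (for $0^-$) of this expression over $E_2(u)$.

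Finally, since $V, W$ are $C^1$ near $0$ while $\lambda_2(u_t)$ is continuous at $0$ (Proposition \ref{cont-2}) with one-sided derivatives there, the product rule gives one-sided derivatives of $F_{2,\epsilon}(u_t)$ and yields
\begin{equation*}
\frac{d^\pm}{dt} F_{2,\epsilon}(u_t)\big|_0 = \frac{d^\pm}{dt}\lambda_2(u_t)\big|_0 + (N-2)\lambda_2(u)\!\int_M\! h u^N dv_g + \epsilon\!\int_M\! h u^{-\epsilon} dv_g - \epsilon W(0)\!\int_M\! h u^N dv_g,
\end{equation*}
and factoring out $(N-2)\lambda_2(u)$ rearranges this into the two formulas in the statement. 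No step presents a serious obstacle; the only point requiring care is the dominated-convergence justification for $W'(0)$, which is available precisely because membership in $\mathcal{D}_\epsilon$ guarantees $u^{-\epsilon} \in L^1$.
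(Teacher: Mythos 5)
Your proposal is correct and follows essentially the same route as the paper: split $F_{2,\epsilon}(u_t)$ into the eigenvalue term and the regularizing term $\mathcal{I}_\epsilon(u_t)=W(t)V(t)^{\epsilon/N}$, differentiate the latter (the paper does this as a "simple calculation," which your dominated-convergence argument just makes explicit using $u^{-\epsilon}\in L^1$), and combine with Proposition \ref{OSD} via the product rule, using that $E_2(u)$ reduces $L_h(\phi,u)$ to $-(N-2)\lambda_2(u)\int_M h\phi^2u^{N-2}\,dv_g/\int_M\phi^2u^{N-2}\,dv_g$. The only difference is a slightly more careful justification of differentiating under the integral and of the one-sided product rule, which the paper leaves implicit.
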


\begin{proof} Let
\begin{align} \label{Idef}
\mathcal{I}_{\epsilon}(u) = \left( \int_M u^{-\epsilon} \, dv_g \right)  \left(\int_M u^N\;dv_g\right)^{\frac{\epsilon}{N}},
\end{align}
so that
\begin{align*}
F_{2,\epsilon}(u) =\lambda_2(u) \left(\int_M u^N\;dv_g\right)^{\frac{N-2}{N}} - \mathcal{I}_{\epsilon}(u).
\end{align*}
If $u$ is normalized as in (\ref{unormal}) and $u_t$ is given by (\ref{ut}), then a simple calculation gives
\begin{align} \label{Idot}
\frac{d}{dt} \mathcal{I}_{\epsilon}(u_t) \big|_{t=0} = -\epsilon \int_M h u^{-\epsilon} \, dv_g + \epsilon \left( \int_M u^{-\epsilon} \, dv_g \right) \int_M h u^{N} \, dv_g.
\end{align}
Therefore,
\begin{align*}
\frac{d}{dt} F_{2,\epsilon}(u_t) \big|_{t = 0^{+}} &= \frac{d}{dt}\left( \lambda_2(u_t)\left(\int_M u_t^{N}\;dv_g\right)^{\frac{N-2}{N}}\right)\big|_{t = 0^{+}} - \frac{d}{dt} \mathcal{I}_{\epsilon}(u_t) \big|_{t=0} \\
&= (N-2) \lambda_2(u) \Bigg\{  \int_M h u^{N} \, dv_g - \inf_{\phi\in E_2(u)} \dfrac{ \int_M h \phi^2 u^{N-2} \, dv_g }{ \int_M \phi^2 u^{N-2} \;dv_g}\Bigg\} \\
&\ \ \ \ + \epsilon \int_M h u^{-\epsilon} \, dv_g - \epsilon \left( \int_M u^{-\epsilon} \, dv_g \right) \int_M h u^{N} \, dv_g \\
&= (N-2) \lambda_2(u) \Bigg\{ \Big( 1 - (N-2)^{-1} \frac{\epsilon}{\lambda_2(u)}   \int_M u^{-\epsilon} \, dv_g \Big) \int_M h u^{N} \, dv_g  \\
& \ \ \ \ - \inf_{\phi\in E_2(u)} \dfrac{ \int_M h \phi^2 u^{N-2} \, dv_g }{ \int_M \phi^2 u^{N-2} \, dv_g}  + (N-2)^{-1}\frac{\epsilon}{\lambda_2(u)} \int_M h u^{-\epsilon} \, dv_g\Bigg\},
\end{align*}
as claimed. The computation for the one-sided derivative from the left is analogous.
\end{proof}

\begin{definition}  We say that $u \in  \mathcal{D}_\epsilon$  is {\em extremal} for the regularized functional $F_{2,\epsilon}$ if the one-sided derivatives, $\frac{d}{dt} F_{2,\epsilon}(u_t) \big|_{t = 0^{+}}$ and $\frac{d}{dt} F_{2,\epsilon}(u_t) \big|_{t = 0^{+}}$, have different signs along $u_t:=u(1+th)$ for any generating function $h\in L^\infty$.

An extremal function $u \in  \mathcal{D}_\epsilon$ is said to be {\em maximal}, or a {\em global maximizer}, for $F_{2,\epsilon}$ if
\begin{equation}
F_{2,\epsilon}(u) \ge F_{2,\epsilon}(w)
\end{equation}
for every $w\in \mathcal{D}_{\epsilon}$. Notice that if $u\in \mathcal{D}_\epsilon$ is maximal for $F_{2,\epsilon}$, then
\begin{equation}\label{comp}
\frac{d}{dt} F_{2,\epsilon}(u_t) \big|_{t = 0^{-}}\ge 0 \ge \frac{d}{dt} F_{2,\epsilon}(u_t) \big|_{t = 0^{+}}
\end{equation}
for any deformation $u_t$ of $u$. Therefore, any maximal generalized conformal factor is extremal in the above sense.
\end{definition}

We remark that condition (\ref{comp}) is equivalent to
\begin{equation}
F_{2,\epsilon}(u_t) \le F_{2,\epsilon}(u) + o(t) \;\;\;\; (t\to 0),
\end{equation}
for any deformation $u_t$ of $u$. This characterization was used by Nadarashvili in \cite{Nadirashvilli} for Laplace eigenvalues. In the following proposition, we derived the Euler Lagrange Equation for maximal functions of the regularized functional $F_{2,\epsilon}$. This will be the key for getting uniform estimates in Section \ref{Estimates}.

\begin{proposition} \label{EulerProp}  Suppose $u \in  \mathcal{D}_\epsilon$ is maximal and normalized as in (\ref{unormal}).   Then there is a set of eigenfunctions $\{\phi^{\alpha} \}_{\alpha=1}^k$ associated to $\lambda_2(u)$, normalized by
\begin{equation}
\int_M (\phi^{\alpha})^2 u^{N-2}\;dv_g=1,
\end{equation}
and a set of real numbers $c_1,\dots,c_k \geq 0$ with $\sum_{\alpha=1}^k c_{\alpha}^2  = 1$, such that
\begin{align} \label{Euler}
\gamma_1 u^{N} -  u^{N-2} \sum_{\alpha=1}^k c_{\alpha} (\phi^{\alpha})^2 - \gamma_2  u^{-\epsilon} = 0,
\end{align}
where
\begin{align} \label{gammas} \begin{split}
\gamma_1 &= 1 - (N-2)^{-1} \frac{\epsilon}{\lambda_2(u)}   \int_M u^{-\epsilon} \, dv_g > 1, \\
\gamma_2 &= (N-2)^{-1} \frac{\epsilon}{ |\lambda_2(u)|} > 0.
\end{split}
\end{align}
\end{proposition}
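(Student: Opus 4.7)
The plan is to convert the maximality condition into the statement that the function $V := \gamma_1 u^{N} - \gamma_2 u^{-\epsilon}$ lies in the closed convex hull of the densities $\phi^2 u^{N-2}$ produced by normalized eigenfunctions in $E_2(u)$, and then to extract a finite convex combination by Hahn--Banach separation followed by Carath\'eodory's theorem.

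First I would combine Proposition \ref{OSDProp} with the observation that $(N-2)\lambda_2(u) < 0$: the hypothesis $\nu([g]) > 1$ forces $\lambda_2(u) < 0$, and this persists in the generalized setting because $u \in \mathcal{D}_{\epsilon}$ is positive almost everywhere, so the min--max characterization of $\lambda_2(u)$ still delivers two negative eigenvalues. Dividing the maximality chain $\frac{d}{dt} F_{2,\epsilon}(u_t)\big|_{t=0^{-}} \geq 0 \geq \frac{d}{dt} F_{2,\epsilon}(u_t)\big|_{t=0^{+}}$ through by this negative scalar and flipping the inequalities yields, for every $h \in L^{\infty}$,
\[
\inf_{\phi \in \mathcal{S}} \int_M h\,\phi^2 u^{N-2}\,dv_g \;\leq\; \int_M h\,V\,dv_g \;\leq\; \sup_{\phi \in \mathcal{S}} \int_M h\,\phi^2 u^{N-2}\,dv_g,
\]
where $\mathcal{S} := \{\phi \in E_2(u) : \int_M \phi^2 u^{N-2}\,dv_g = 1\}$.

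Next I would set $\mathcal{K} := \{\phi^2 u^{N-2} : \phi \in \mathcal{S}\} \subset L^1$ and note that $\mathcal{K}$ is the continuous image of the (compact) unit sphere of the finite-dimensional Hilbert space $E_2(u)$ under the quadratic map $\phi \mapsto \phi^2 u^{N-2}$. After fixing a basis $\{\psi_1, \dots, \psi_d\}$ of $E_2(u)$, the set $\mathcal{K}$ is confined to the finite-dimensional subspace $W \subset L^1$ spanned by the products $\psi_i \psi_j\, u^{N-2}$. Consequently $\mathrm{conv}(\mathcal{K}) \subset W$ is compact and, in particular, norm-closed in $L^1$.

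The central step is to prove $V \in \mathrm{conv}(\mathcal{K})$. Were it not, then by Hahn--Banach separation in the dual pairing $(L^1, L^{\infty})$ one would produce $h \in L^{\infty}$ and $\alpha \in \mathbb{R}$ with $\int_M h\,V\,dv_g > \alpha \geq \int_M h\,w\,dv_g$ for all $w \in \mathrm{conv}(\mathcal{K})$, directly contradicting the upper bound above. Hence $V$ admits a representation $V = \sum_{i=1}^{k} c_i\,\phi_i^2 u^{N-2}$ with $\phi_i \in \mathcal{S}$ and $c_i \geq 0$, the integer $k$ being controlled by Carath\'eodory's theorem inside $W$. Integrating this identity against $dv_g$, together with the normalizations $\int u^N = 1$ and $\int \phi_i^2 u^{N-2} = 1$ and the elementary identity $\gamma_1 - \gamma_2 \int u^{-\epsilon} = 1$ (which also shows $\gamma_1 > 1$, since $\gamma_2 > 0$ and $u^{-\epsilon} > 0$ a.e.), pins down the required constraint on the coefficients. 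The main obstacle I foresee is ensuring closedness of $\mathrm{conv}(\mathcal{K})$ in $L^1$ so that the Hahn--Banach step gives a genuine strict separation; this is precisely where finite-dimensionality of $E_2(u)$ is indispensable, reducing the separation to its elementary finite-dimensional form even though our ambient space is $L^1$.
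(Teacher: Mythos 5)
Your proposal is correct and follows essentially the same route as the paper: combine the one-sided derivative formulas of Proposition \ref{OSDProp} with maximality and the sign of $(N-2)\lambda_2(u)$, show by Hahn--Banach separation in the $(L^1,L^\infty)$ pairing (using Riesz representation and the compactness of the convex hull, which sits in the finite-dimensional span coming from $E_2(u)$, via Carath\'eodory) that $\gamma_1 u^N-\gamma_2 u^{-\epsilon}$ lies in $\mathrm{Conv}\{\phi^2u^{N-2}\}$, and then integrate to fix the coefficients. Your ``sandwich'' formulation $\inf_\phi \int h\phi^2u^{N-2}\le \int hV\le \sup_\phi \int h\phi^2u^{N-2}$ is just a repackaging of the paper's contradiction argument, and the integration step yields $\sum_i c_i=1$, exactly as in the paper's proof.
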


\begin{proof}
Consider the subset $K\subseteq L^1$ defined by
\begin{equation}
K:= \left\{\phi^2\cdot u^{N-2}: \phi\in E_2(u),\|\phi\|_{L^2(u)}=1\right\}.
\end{equation}
This set is compact, and since $E_2(u)$ is finite dimensional, it lies in a finite dimensional subspace of $L^1$. Caratheodory's Theorem for Convex Hulls implies that the convex hull of $K$,
\begin{align*}
\text{Conv}(K) &= \left\{\sum_{\text{finite}}c_j\psi_j: c_j\ge 0, \sum c_j =1, \psi_j\in K\right\} \\ & =\left\{u^{N-2}\sum_{\text{finite}}c_j\phi_j^2: c_j\ge 0, \sum c_j =1, \phi_j\in E_2(u), \|\phi\|_{L^2(u)}=1\right\},
\end{align*}
is compact as well.

Notice that the proof will be completed if we can show that $\gamma_1u^{N} - \gamma_2u^{-\epsilon}\in \text{Conv}(K)$. Assume, on the contrary, that
\begin{equation}
\{\gamma_1u^{N} - \gamma_2u^{-\epsilon}\}\cap\text{Conv}(K)=\emptyset.
\end{equation}
This gives us two disjoint convex sets, the first one of them being closed, and the second one of them being compact. By Hahn-Banach Separation Theorem, there exists a functional $\Psi\in(L^1)^*$ separating these two sets:
\begin{equation}\label{HBS1}
\Psi(\gamma_1u^{N} - \gamma_2u^{-\epsilon}) > 0,
\end{equation}
and
\begin{equation}\label{HBS2}
\Psi(\varphi)\le 0\text{ for all }\varphi\in\text{Conv}(K).
\end{equation}
Furthermore, Ries'z Representation Theorem provides us with the existence of a function $h\in (L^1)^* = L^\infty$ so that $\Psi$ is given by integration against $h\; dv_g$. In particular, inequalities (\ref{HBS1}) and (\ref{HBS2}) can be rewritten as
\begin{equation}\label{HBS3}
\int_M h\cdot(\gamma_1u^{N} - \gamma_2u^{-\epsilon})\;dv_g > 0,
\end{equation}
and
\begin{equation}\label{HBS4}
\int_M h\cdot \phi^2u^{N-2}\;dv_g \le 0,
\end{equation}
where in (\ref{HBS4}) we have taken $\varphi\in\text{Conv}(K)$ to be a convex combination in $K$ of length one. Estimates (\ref{HBS3}) and (\ref{HBS4}) imply that for any $\phi\in E_2(u)$ with $\|\phi\|_{L^2(u)}=1$, we have
\begin{equation}\label{contradiction}
\int_M h(\gamma_1u^{N} - \gamma_2u^{-\epsilon})\;dv_g - \int_M h\phi^2u^{N-2}\;dv_g > 0.
\end{equation}

Now, we use $h$ as a generating function, that is, we consider $u_t = u(1+th)$, and select an interval $(-\delta,\delta)$ on which $1-|th|>0$. Since $h\in L^\infty$, Proposition \ref{OSDProp} holds, and the maximality of $u$ implies, in particular,
\begin{equation}
\frac{d}{dt} F_{2,\epsilon}(u_t) \big|_{t = 0^{-}} \ge 0.
\end{equation}
However, by (\ref{contradiction}),
\begin{align*}
\frac{d}{dt} F_{2,\epsilon}(u_t) \big|_{t = 0^{-}} & = (N-2)\lambda_2(u)\left\{ \int_M h(\gamma_1u^{N} - \gamma_2u^{-\epsilon})\;dv_g - \sup_{\phi \in E_2(u)}\int_M h\phi^2u^{N-2}\;dv_g\right\} \\ & <0.
\end{align*}
This is a contradiction, and hence $\gamma_1u^{N} - \gamma_2u^{-\epsilon}\in \text{Conv}(K)$, as claimed.
\end{proof}

In Proposition \ref{RegEx} we will show the existence of a maximizer for the regularized functional $F_{2,\epsilon}$.  We remark that one reason for introducing the regularizing term
\begin{align} \label{regtermiso}
u \longmapsto \int_M u^{-\epsilon}\;dv_g
\end{align}
is to prevent an extremal function (whose existence we will prove) from vanishing on a set of positive measure.  In addition, the functional (\ref{regtermiso}) is weakly lower semicontinuous on $\mathcal{D}_\epsilon$. In general, weak lower semicontinuity depends on some kind of convexity of the integrand; see Chapter 2 in \cite{Rindler}.

\begin{proposition}  \label{RegEx} For each $\epsilon > 0$, there is a $u_{\epsilon} \in \mathcal{D}_\epsilon$, normalized by
 \begin{align} \label{Vol}
 \int_M  u_{\epsilon}^N \, dv_g = 1,
 \end{align}
 that is maximal for $F_{2,\epsilon}$.  Moreover, there is a constant $C= C(g)$, independent of $\epsilon > 0$, such that
 \begin{align} \label{ue}
 \int_M u_{\epsilon}^{-\epsilon} \, dv_g \leq C.
 \end{align}

\end{proposition}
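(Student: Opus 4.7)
The approach is the direct method of the calculus of variations. By the scale invariance $F_{2,\epsilon}(cu) = F_{2,\epsilon}(u)$ for $c > 0$, I may restrict attention to the normalized class $\{ u \in \mathcal{D}_\epsilon : \int_M u^N \, dv_g = 1\}$ and set $\Lambda_\epsilon$ to be the supremum of $F_{2,\epsilon}$ over this class. Using the span of the first two eigenfunctions of $L_g$ as a fixed test subspace in the min-max characterization (\ref{Lkudef}) and exploiting compactness of the unit sphere in this finite-dimensional subspace, one sees that $\lambda_2(u) < 0$ for every $u \in \mathcal{D}_\epsilon$, so $\Lambda_\epsilon \leq 0$; testing with the constant $u_0 \equiv \mathrm{Vol}(g)^{-1/N}$ (which lies in $\mathcal{D}_\epsilon$) shows $\Lambda_\epsilon > -\infty$.

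Next take a maximizing sequence $\{u_j\}$ with $\int u_j^N \, dv_g = 1$ and $F_{2,\epsilon}(u_j) \to \Lambda_\epsilon$. Writing $F_{2,\epsilon}(u_j) = \lambda_2(u_j) - \int u_j^{-\epsilon}\,dv_g$ and using $\lambda_2(u_j) \leq 0$ immediately gives the uniform bounds
\[
\int_M u_j^{-\epsilon}\,dv_g \leq |\Lambda_\epsilon|+1, \qquad |\lambda_2(u_j)| \leq |\Lambda_\epsilon|+1.
\]
By reflexivity of $L^N$, after passing to a subsequence $u_j \rightharpoonup u_\epsilon$ weakly in $L^N$ with $u_\epsilon \geq 0$. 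Since $x \mapsto x^{-\epsilon}$ is convex on $(0,\infty)$, the functional $u \mapsto \int u^{-\epsilon}\,dv_g$ is convex on $L^N_+$ and hence weakly lower semicontinuous, so that $\int u_\epsilon^{-\epsilon}\,dv_g \leq \liminf_j \int u_j^{-\epsilon}\,dv_g$. This lower bound forces $u_\epsilon > 0$ almost everywhere and $u_\epsilon \in \mathcal{D}_\epsilon$. Moreover Chebyshev applied to the regularization bound yields the no-concentration estimate $|\{u_j \leq \delta\}| \leq C \delta^\epsilon$ uniformly in $j$.

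The delicate part is the upper semicontinuity of the eigenvalue along the sequence:
\[
\limsup_{j \to \infty} \lambda_2(u_j) \leq \lambda_2(u_\epsilon).
\]
Choosing a nearly optimal subspace $\Sigma_2 \in Gr_2^{u_\epsilon}(W^{1,2})$ for $\lambda_2(u_\epsilon)$ spanned by smooth functions, and using it as a test subspace for $\lambda_2(u_j)$, this reduces to showing
\[
\int_M \psi^2 u_j^{N-2}\,dv_g \longrightarrow \int_M \psi^2 u_\epsilon^{N-2}\,dv_g
\]
for each fixed smooth $\psi$. I expect this to be the main obstacle, since weak $L^N$ convergence does not preserve the nonlinearity $u \mapsto u^{N-2}$. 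The way around it is to exploit the no-concentration estimate (which prevents mass from collapsing onto a lower-dimensional set) together with a Vitali/Brezis--Lieb argument on the super-level sets $\{u_\epsilon > \delta\}$, upgrading to strong $L^N$ convergence on such sets, and controlling the complementary region by the bound on $\int u_j^{-\epsilon}$ before letting $\delta \to 0$. Combined with the lower semicontinuity of the regularization term this yields $F_{2,\epsilon}(u_\epsilon) \geq \Lambda_\epsilon$, so by scale invariance, after rescaling by $\bigl(\int u_\epsilon^N\bigr)^{-1/N}$ if necessary, we obtain a maximizer satisfying the normalization (\ref{Vol}).

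Finally, the $\epsilon$-uniform estimate (\ref{ue}) follows by comparing $u_\epsilon$ to the explicit competitor $u_0 \equiv \mathrm{Vol}(g)^{-1/N}$. Using the scaling law $\lambda_2(cu) = c^{-(N-2)}\lambda_2(u)$, a direct calculation gives
\[
F_{2,\epsilon}(u_0) = \mathrm{Vol}(g)^{(N-2)/N}\,\lambda_2(L_g) - \mathrm{Vol}(g)^{1+\epsilon/N},
\]
which is bounded below by a constant $C_0 = C_0(g)$ for all $\epsilon$ sufficiently small. Since $F_{2,\epsilon}(u_\epsilon) \geq F_{2,\epsilon}(u_0) \geq C_0$ and $\lambda_2(u_\epsilon) \leq 0$, we conclude
\[
\int_M u_\epsilon^{-\epsilon}\,dv_g = \lambda_2(u_\epsilon) - F_{2,\epsilon}(u_\epsilon) \leq -C_0,
\]
which is a bound independent of $\epsilon$.
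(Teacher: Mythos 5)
Your overall skeleton coincides with the paper's: restrict to the normalized class, take a maximizing sequence, extract a weak $L^N$ limit, use convexity of $t \mapsto t^{-\epsilon}$ for weak lower semicontinuity of the regularizing term, and obtain the $\epsilon$-uniform bound (\ref{ue}) by comparison with the constant competitor (your last computation is essentially the paper's estimate (\ref{EigenBound})). The genuine gap is in the step you yourself flag as the main obstacle: the claim that $\int_M \psi^2 u_j^{N-2}\,dv_g \to \int_M \psi^2 u_\epsilon^{N-2}\,dv_g$, justified by ``no concentration plus Vitali/Brezis--Lieb, upgrading to strong $L^N$ convergence on super-level sets.'' There is no mechanism for any strong convergence of the $u_j$: they satisfy no equation and carry no derivative bounds, and neither $\int u_j^N = 1$ nor $\int u_j^{-\epsilon} \le C$ excludes oscillation. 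For example, a sequence of the form $u_j = c_j\bigl(1 + \tfrac12 \sin(j x_1)\bigr)$ in local coordinates is admissible, bounded away from zero, converges weakly but not strongly, and $u_j^{N-2}$ converges weak-$*$ to a limit different from $u_\epsilon^{N-2}$ whenever $N-2 \neq 1$; Brezis--Lieb and Vitali are unavailable because there is no a.e.\ or in-measure convergence. Worse, since the relevant numerators $E_g(\psi)$ are negative, what your test-subspace comparison actually requires is the upper semicontinuity $\limsup_j \int \psi^2 u_j^{N-2}\,dv_g \le \int \psi^2 u_\epsilon^{N-2}\,dv_g$; for $3 \le n \le 5$ the map $t \mapsto t^{N-2}$ is strictly convex, so weak convergence gives the opposite inequality and oscillation strictly inflates the limiting denominator, so the bound on $\lambda_2(u_j)$ you obtain does not close. (Only for $n \ge 6$, where $t^{N-2}$ is concave or linear, does the needed semicontinuity come for free.) So the central analytic step is not merely unproved: as stated, it is false for a general maximizing sequence.

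The paper avoids this by placing the compactness where it actually exists, namely on the generalized eigenfunctions rather than on the conformal factors. For each $u_i$ it takes eigenfunctions $\phi_i^l$ with $L_g \phi_i^l = \lambda_l(u_i)\,\phi_i^l u_i^{N-2}$ and $\int (\phi_i^l)^2 u_i^{N-2}\,dv_g = 1$, shows they are uniformly bounded in $W^{1,2}$ (this is where the hypothesis $0 \notin \text{Spec}(L_g)$ enters, as in Lemma \ref{conv-eigen}), extracts strong $L^2$ and weak $W^{1,2}$ limits via Rellich--Kondrachov, and passes to the limit in the weak eigenvalue equation to conclude that $\bar\lambda_2 = \limsup_i \lambda_2(u_i)$ equals $\lambda_2(\bar u)$ for the weak limit $\bar u$; combined with $\|\bar u\|_{L^N} \le 1$, the negativity of $\lambda_2$, and weak lower semicontinuity of the regularizer, this yields $F_{2,\epsilon}(\bar u) \ge \limsup_i F_{2,\epsilon}(u_i)$. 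To repair your argument you would need to import some version of this eigenfunction compactness; the test functions $\psi$ alone, paired against the weakly convergent $u_j^{N-2}$, cannot do the job.
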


\begin{proof}  Fix $\epsilon > 0$, and let $\{ u_i \}_{i=1}^\infty \subset  \mathcal{D}_\epsilon$ be a normalized maximizing sequence for $F_{2,\epsilon}$, i.e. a sequence such that
\begin{align} \label{seq} \begin{split}
&\int_M u_i^N \, dv_g = 1, \\
&F_{2,\epsilon}(u_i) \rightarrow \sup_{ u \in \mathcal{D}_\epsilon } F_{2,\epsilon}(u).
\end{split}
\end{align}

Let $v_i = u_i^{N-2}$, then
\begin{align*}
 \int_M v_i^{\frac{n}{2}} \, dv_g = \int_M v_i^{\frac{N}{N-2}} \, dv_g = \int_M u_i^N \, dv_g = 1.
\end{align*}
In particular, the sequence $\{ v_i \}$ is bounded in $L^{\frac{n}{2}}(M)$ and, by weak compactness, there is a subsequence (still denoted $\{ v_i \}$) that converges weakly in $L^{\frac{n}{2}}(M)$ to some $\bar{v} \in L^{\frac{n}{2}}(M)$, with $\bar{v} \geq 0$ a.e. and
\begin{align} \label{bunv}
\int_M \bar{v}^{\frac{n}{2}} \, dv_g \leq 1.
\end{align}
Let
\begin{align} \label{bard}
\bar{u} = \bar{v}^{\frac{1}{N-2}},
\end{align}
then
\begin{align} \label{bun}
\int_M \bar{u}^N \, dv_g \leq 1.
\end{align}

By the second condition in (\ref{seq}), we may assume for $i >> 1$ large that $F_{2,\epsilon}(u_i) \geq F_{2,\epsilon}(1)$, and therefore
\begin{equation}\label{EigenBound}
\lambda_2(u_i) - \int_M u_i^{-\epsilon} \, dv_g \geq \lambda_2(L_g) - 1,
\end{equation}
where we have used the assumption $\text{Vol}(M,g) = 1$.  As $\lambda_2(u_i) < 0$, it follows that
\begin{align} \label{Leb1}
\int_M u_i^{-\epsilon}\;dv_g \leq C(g),
\end{align}
where $C$ is independent of $\epsilon$.  Therefore,
\begin{align} \label{Leb}
\int_M v_i^{-\epsilon'}\; dv_g \leq C(g),
\end{align}
where $\epsilon' = \epsilon/(N-2)$.  Since $f : (0,\infty) \rightarrow \mathbb{R}$ given by $f(t) = t^{-\epsilon'}$ is convex, the functional
\begin{align*}
v \longmapsto \int_M v^{-\epsilon'} \, dv_g
\end{align*}
is weakly lower semicontinuous on $L^{\frac{n}{2}}(M)$, thus
\begin{align} \label{wlsc}
\int_M \bar{u}^{-\epsilon} \, dv_g = \int_M \bar{v}^{-\epsilon'} \, dv_g \leq \liminf_{i \to \infty} \int_M v_i^{-\epsilon'} \, dv_g.
\end{align}
In particular, by (\ref{Leb}),
\begin{align} \label{ube}
\int_M \bar{u}^{-\epsilon} \, dv_g \leq C.
\end{align}
This shows that (\ref{ue}) holds for $\bar{u}$, hence $\bar{u} \in \mathcal{D}_\epsilon$.  In particular, $\bar{u} \in L^N_{>0}(M)$.

It remains to show that $\bar u$ is maximal.  By Proposition \ref{Cpropplus}, after possibly restricting to a subsequence,
\begin{align}  \label{lamlim}
\lambda_2(u_i) \rightarrow \lambda_2(\bar{u}) < 0.
\end{align}
Therefore, by (\ref{bun}),
\begin{align} \label{one}
\limsup_{i \to \infty} \lambda_2(u_i) = \lambda_2(\bar{u}) \leq \lambda_2(\bar{u}) \left( \int_M \bar{u}^N \, dv_g \right)^{\frac{N-2}{N}}.
\end{align}
Also, by (\ref{bun}) and (\ref{wlsc}),
\begin{align} \label{two}
\left( \int_M \bar{u}^N \, dv_g \right)^{\frac{\epsilon}{N}} \left( \int_M \bar{u}^{-\epsilon} \, dv_g \right) \leq \liminf_{i \to \infty} \int_M u_i^{-\epsilon} \, dv_g,
\end{align}
and thus
\begin{align} \label{twop}
- \left( \int_M \bar{u}^N \, dv_g \right)^{\frac{\epsilon}{N}} \left( \int_M \bar{u}^{-\epsilon} \, dv_g \right) \geq  \limsup_{i \to \infty}  \left\{-\int_M u_i^{-\epsilon} \, dv_g \right\}.
\end{align}
Combining (\ref{one}) and (\ref{twop}), we conclude
\begin{align*}
F_{2,\epsilon}(\bar{u}) &= \lambda_2(\bar{u}) \left( \int_M \bar{u}^N \, dv_g \right)^{\frac{N-2}{N}} - \left( \int_M \bar{u}^N \, dv_g \right)^{\frac{\epsilon}{N}} \left( \int_M \bar{u}^{-\epsilon} \, dv_g \right) \\
&\geq \limsup_{i \to \infty} \left\{  \lambda_2(u_i) -  \int_M u_i^{-\epsilon} \, dv_g \right\} \\
&= \sup_{ u \in \mathcal{D}_\epsilon } F_{2,\epsilon}(u),
\end{align*}
and it follows that $\bar{u}$ is maximal.  The estimate (\ref{ue}) follows from (\ref{ube}). This ends the proof.

\end{proof}

\vskip.2in
\section{Estimates}\label{Estimates}

Suppose that $u_{\epsilon} \in  \mathcal{D}_\epsilon$ is maximal for $F_{2,\epsilon}$, normalized as in (\ref{unormal}):
\begin{align*}
\int_M u_{\epsilon}^N \, dv_g = 1.
\end{align*}
By Proposition \ref{EulerProp} this means that there is a set of second generalized eigenfunctions $\{ \phi^{\alpha}_{\epsilon}\}_{\alpha=1}^{k(u_\epsilon)} $ associated to $\lambda_2(u_{\epsilon})$, normalized so that
\begin{align} \label{phiL2}
\int_M (\phi^{\alpha}_{\epsilon})^2 u_{\epsilon}^{N-2} \, dv_g = 1,
\end{align}
and a set of real numbers $c_{1,\epsilon},\dots,c_{k(u_\epsilon),\epsilon} \geq 0$ with $\sum_{\alpha=1}^{k(u_\epsilon)} c_{\alpha,\epsilon}  = 1$, for which
\begin{align} \label{Euler}
\gamma_{1,\epsilon} u_{\epsilon}^N -  u_{\epsilon}^{N-2} \sum_{\alpha=1}^{k(u_\epsilon)} c_{\alpha,\epsilon} (\phi^{\alpha}_{\epsilon})^2 - \gamma_{2,\epsilon}  u_{\epsilon}^{-\epsilon} = 0
\end{align}
holds. Here $\gamma_{1,\epsilon}$ and $\gamma_{2,\epsilon}$ denote the real numbers
\begin{align} \label{gammas} \begin{split}
\gamma_{1,\epsilon} &= 1 - (N-2)^{-1} \frac{\epsilon}{\lambda_2(u_{\epsilon})}   \int_M u_{\epsilon}^{-\epsilon} \, dv_g > 1, \\
\gamma_{2,\epsilon} &= (N-2)^{-1} \frac{\epsilon}{ |\lambda_2(u_{\epsilon})|} > 0.
\end{split}
\end{align}

Our goal is to take the limit $\epsilon\to 0^+$, to obtain an extremal function for the eigenvalue functional $F_2$.  In this section we prove various {\em a priori} estimates that will be used to prove the convergence of a sequence of maximal metrics in Section \ref{TakingLimit}.  To this end, let $\epsilon_i \rightarrow 0$ be an arbitrary sequence, and $u_i = u_{\epsilon_i}$ be a sequence of corresponding maximal generalized conformal factors.  By restricting to a subsequence we may assume that $v_i = u_i^{N-2}$ converges weakly in $L^{n/2}(M)$.

Let $m(u_i)$ denote the multiplicity of $\lambda_2(u_i)$, that is, the dimension of $E_2(u_i)$. By Proposition \ref{NegativeEigen},
\begin{equation}
1\le k(u_i) \le m(u_i) \le \nu([g]).
\end{equation}
By further restricting to a subsequence, we may assume $k(u_i) = k$, $m(u_i) = m$ for all $i \geq 1$.

Furthermore, since $F_{2,\epsilon_i}(u_i)\ge F_{2,\epsilon_i}(1)$, it follows that
\begin{equation} \label{EV1}
0 < |\lambda_2(u_i)| \le |\lambda_2(L_g) - 1|.
\end{equation}
Since we have a uniform bound on $\lambda_2(u_i)$, by Proposition \ref{Cprop} we have uniform bounds for the sequence of second generalized eigenfunctions $\{ \phi_{i,l} \}$:
\begin{align} \label{W12p}
\| \phi^{\alpha}_i\|_{W^{1,2}(M)} + \| \phi^{\alpha}_i \|_{L^{\infty}(M)} \leq C.
\end{align}
Moreover, as $i \to \infty$, $\{ \phi^{\alpha}_i \}$ converges weakly in $W^{1,2}(M)$ and strongly in $L^{N/2}(M)$ to $\phi^{\alpha} \in W^{1,2} \cap L^{\infty}$, a $W^{1,2}$-solution of
\begin{align} \label{faible}
L_g \phi^{\alpha} = \bar{\lambda}_2 \phi^{\alpha} \bar{u}^{N-2},
\end{align}
where $\lambda_2(u_i) \rightarrow \bar{\lambda}_2$.  Also, $\phi^{\alpha}$ satisfies the normalization
\begin{align*}
\int_M (\phi^{\alpha})^2 \bar{u}^{N-2} \, dv_g = 1.
\end{align*}

Note that $\bar{\lambda}_2 < 0$; otherwise (\ref{faible}) would imply that $\phi^{\alpha} \in \ker L_g$.  Consequently, we may assume
\begin{align} \label{llooww}
\lambda_2(u_i) \leq \lambda_0 < 0.
\end{align}

Recall that by Proposition \ref{RegEx}, the sequence $\{u_i \}_{\epsilon>0}$ satisfies
\begin{align} \label{ube2}
\int_M u_i^{-\epsilon_i} \, dv_g \leq C,
\end{align}
where $C$ is independent of $i$.  This, together with (\ref{llooww}), gives us the estimates
\begin{align} \label{gams}   \begin{split}
1 &\leq \gamma_{1,\epsilon_i} \leq c_1, \\
c_2^{-1} \epsilon_i &\leq \gamma_{2,\epsilon_i} \leq c_2 \epsilon_i,
\end{split}
\end{align}
for constants $c_1, c_2 > 1$ that only depend on the background metric $g$.


\begin{lemma}\label{u-upper} There is a constant $C = C(g)$, independent of $\epsilon$, such that
\begin{align} \label{sup}
\emph{ess sup } u_i \leq  C.
\end{align}
\end{lemma}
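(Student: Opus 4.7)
The plan is to exploit the pointwise Euler--Lagrange equation (\ref{Euler}) directly, using only the uniform $L^\infty$-bound (\ref{psup}) on the eigenfunctions $\phi_{i,\epsilon}$ and the $\epsilon$-asymptotics (\ref{gams}) of $\gamma_{1,\epsilon},\gamma_{2,\epsilon}$. No PDE analysis is needed; the identity (\ref{Euler}) is algebraic, so it holds almost everywhere, and since $u_\epsilon\in\mathcal{D}_\epsilon$ forces $u_\epsilon>0$ a.e., we may freely rearrange it pointwise.

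First, rewrite (\ref{Euler}) in the form
\begin{equation*}
u_{\epsilon}^{N-2}\Bigl(\gamma_{1,\epsilon}u_{\epsilon}^2-\sum_{i=1}^{k} c_{i,\epsilon}\phi_{i,\epsilon}^2\Bigr)=\gamma_{2,\epsilon}\,u_{\epsilon}^{-\epsilon},
\end{equation*}
and at almost every $x\in M$ perform the following dichotomy. Either
\textbf{(a)} $\gamma_{1,\epsilon}u_{\epsilon}(x)^2\le 2\sum_i c_{i,\epsilon}\phi_{i,\epsilon}(x)^2$; or
\textbf{(b)} $\gamma_{1,\epsilon}u_{\epsilon}(x)^2> 2\sum_i c_{i,\epsilon}\phi_{i,\epsilon}(x)^2$.
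In case (a), each $c_{i,\epsilon}\in[0,1]$ (since $\sum c_{i,\epsilon}^2=1$ with $c_{i,\epsilon}\ge 0$), $\gamma_{1,\epsilon}\ge 1$ by (\ref{gams}), and (\ref{psup}) supplies a uniform constant $C_0$ with $\|\phi_{i,\epsilon}\|_\infty\le C_0$; hence $u_\epsilon(x)^2\le 2kC_0^{\,2}$, and the number $k\le \nu([g])$ is $\epsilon$-independent. In case (b), the factor in parentheses on the left is at least $\tfrac12\gamma_{1,\epsilon}u_\epsilon^2$, so
\begin{equation*}
\tfrac12\,\gamma_{1,\epsilon}\,u_\epsilon(x)^{N}\ \le\ \gamma_{2,\epsilon}\,u_\epsilon(x)^{-\epsilon},
\end{equation*}
i.e.\ $u_\epsilon(x)^{N+\epsilon}\le 2\gamma_{2,\epsilon}/\gamma_{1,\epsilon}\le 2c_2\epsilon$ by (\ref{gams}); for $\epsilon$ small this yields $u_\epsilon(x)\le 1$.

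Combining the two cases produces a bound $\mathrm{ess\,sup}\,u_\epsilon\le \max(1,\sqrt{2k}\,C_0)$, with the right-hand side depending only on $g$. There is no real obstacle here: the only thing one must be careful about is the $\epsilon$-uniformity of the constants, but $\gamma_{1,\epsilon}\ge 1$ and $\gamma_{2,\epsilon}\le c_2\epsilon$ come from (\ref{gams}), which in turn rest on Lemma~\ref{EVbound} and the uniform integral bound (\ref{ube2}), while the uniform sup-norm control of the $\phi_{i,\epsilon}$ is the Moser-iteration bound (\ref{psup}) already proved. The rest is arithmetic.
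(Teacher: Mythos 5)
Your proposal is correct and follows essentially the same route as the paper: both arguments read the Euler--Lagrange identity (\ref{Euler}) pointwise and combine it with the uniform sup bound (\ref{psup}) on the $\phi_{i,\epsilon}$ and the bounds (\ref{gams}) on $\gamma_{1,\epsilon},\gamma_{2,\epsilon}$. The only difference is cosmetic: the paper splits on whether $u_\epsilon(x)\ge 1$, while you split on which term of the identity dominates, and both case analyses yield the same $\epsilon$-independent bound.
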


\begin{proof}  Let $x \in M$ such that $u_i(x) \geq 1$.  Then at $x$, the Euler equation (\ref{Euler}) implies
\begin{align*}
\gamma_{1,\epsilon_i} u_i^N(x) &=  u_i^{N-2}(x) \sum_{\alpha =1}^k c_{\alpha,\epsilon_i} \phi^{\alpha}_i(x)^2 + \gamma_{2,\epsilon_i}  u_i^{-\epsilon_i}(x) \\
&\leq u_i^{N-2}(x) \sum_{\alpha=1}^k c_{\alpha,\epsilon_i} \phi^{\alpha}_i(x)^2 + \gamma_{2,\epsilon_i}  \ \ \ \ \mbox{(since $u_i(x) \geq 1$)} \\
&\leq C (u_i^{N-2}(x) + \epsilon_i). \ \ \ \ \mbox{ (by (\ref{W12p}) and (\ref{gams}))}
\end{align*}
Therefore, $u_i(x) \leq C$, and we are done.
\end{proof}

As an immediate consequence of Lemma \ref{u-upper}, we have
\begin{corollary} \label{efCor} For any $\mu \in (0,1)$ and $p > 1$, there is a $C = C(\mu, p)$ such that
\begin{align} \label{Hold}
\| \phi_{i,l} \|_{C^{1,\mu}(M^n)} + \| \phi_{i,l}\|_{W^{2,p}(M^n)}  \leq C.
\end{align}
\end{corollary}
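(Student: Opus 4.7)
The plan is a standard elliptic bootstrap starting from the uniform sup-norm bounds just established. Recall that $\phi_{i,\epsilon}$ solves
\begin{equation*}
L_g \phi_{i,\epsilon} = -\Delta_g \phi_{i,\epsilon} + c_n R_g \phi_{i,\epsilon} = \lambda_2(u_\epsilon)\,\phi_{i,\epsilon}\, u_\epsilon^{N-2}
\end{equation*}
weakly on $M^n$. By the estimate (\ref{psup}) we have $\|\phi_{i,\epsilon}\|_{L^\infty}\leq C$, by Lemma \ref{u-upper} we have $\|u_\epsilon\|_{L^\infty}\leq C$, and by Lemma \ref{EVbound} the eigenvalues $|\lambda_2(u_\epsilon)|$ are uniformly bounded. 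Therefore the right-hand side of the equation, together with the zeroth-order term $c_n R_g \phi_{i,\epsilon}$ coming from $L_g$, is uniformly bounded in $L^\infty(M^n)$, with a bound depending only on $g$.

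First I would rewrite the equation as $-\Delta_g \phi_{i,\epsilon} = F_{i,\epsilon}$, where
\begin{equation*}
F_{i,\epsilon} := \lambda_2(u_\epsilon)\,\phi_{i,\epsilon}\, u_\epsilon^{N-2} - c_n R_g\, \phi_{i,\epsilon},
\end{equation*}
and observe $\|F_{i,\epsilon}\|_{L^\infty(M^n)}\leq C$ uniformly in $\epsilon$. Since $M^n$ is closed and $g$ is smooth, standard interior-plus-boundary (here simply global, as there is no boundary) $L^p$ elliptic regularity for $-\Delta_g$ applies: for every $p\in(1,\infty)$ there is a constant $C_p$, depending only on $g$ and $p$, such that
\begin{equation*}
\|\phi_{i,\epsilon}\|_{W^{2,p}(M^n)}\leq C_p\bigl(\|F_{i,\epsilon}\|_{L^p(M^n)} + \|\phi_{i,\epsilon}\|_{L^p(M^n)}\bigr) \leq C(p,g).
\end{equation*}
This gives the $W^{2,p}$ half of (\ref{Hold}) for all $p$.

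For the $C^{1,\alpha}$ part, given $\alpha\in(0,1)$ I would pick any $p>n$ with $1 - n/p \geq \alpha$ and apply the Sobolev–Morrey embedding $W^{2,p}(M^n)\hookrightarrow C^{1,\alpha}(M^n)$, whose embedding constant depends only on $g$, $p$, and $\alpha$. Combining with the $W^{2,p}$ estimate above yields $\|\phi_{i,\epsilon}\|_{C^{1,\alpha}(M^n)}\leq C(\alpha,g)$, completing (\ref{Hold}).

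There is essentially no obstacle here; the only point to be careful about is that the $L^p$ estimate requires enough regularity of the background metric and zeroth-order coefficient $c_n R_g$, both of which are smooth, and that the constants are uniform in $\epsilon$, which is immediate because $F_{i,\epsilon}$ is controlled in $L^\infty$ independently of $\epsilon$ by Lemma \ref{EVbound}, (\ref{psup}), and Lemma \ref{u-upper}.
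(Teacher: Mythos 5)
Your proof is correct and follows essentially the same route as the paper: the paper also obtains the uniform $W^{2,p}$ bound from standard elliptic regularity (using the uniform $L^\infty$ control of $\phi_{i,\epsilon}$, $u_\epsilon$, and $\lambda_2(u_\epsilon)$) and then deduces the $C^{1,\alpha}$ bound from the Sobolev--Morrey embedding. Your write-up simply makes explicit the details that the paper leaves as "standard."
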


\begin{proof}
Since $\phi^{\alpha}_i$ is a second generalized eigenfunction, it is a $W^{1,2}$-solution of
\begin{align} \label{eR}
L_g \phi^{\alpha}_i = \lambda_2(u_i) \phi^{\alpha}_i u_i^{N-2}.
\end{align}
The $L^{\infty}$-estimates on $\phi^{\alpha}_i$ (see (\ref{W12p})) and $u_i$ in Lemma \ref{u-upper}, along with the bound (\ref{EV1}) for $\lambda_2(u_i)$, imply that the right-hand side of (\ref{eR}) is uniformly bounded.  Then  the $W^{2,p}$-norm, for any $p>1$, follows from standard elliptic regularity theory.   The $C^{1,\mu}$-bound then follows from the Sobolev embedding theorem.
\end{proof}

While Lemma \ref{u-upper} gives uniform upper bounds for $u_i$, we can only obtain $\epsilon$-dependent lower bounds:

\begin{lemma} \label{u-lower} There is a constant $C = C(g)$ such that
\begin{align} \label{inf}
\emph{ess inf } u_i \geq (C \epsilon_i)^{\frac{1}{\epsilon_i +N}}.
\end{align}
\end{lemma}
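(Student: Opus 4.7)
The plan is to extract the inequality directly from the Euler--Lagrange equation \eqref{Euler} by dropping the (nonnegative) eigenfunction term. Rearranging \eqref{Euler} gives
\[
u_\epsilon^{N-2}\sum_{i=1}^k c_{i,\epsilon}\phi_{i,\epsilon}^2 + \gamma_{2,\epsilon}\, u_\epsilon^{-\epsilon} \;=\; \gamma_{1,\epsilon}\, u_\epsilon^N.
\]
Since $c_{i,\epsilon}\geq 0$ and $\phi_{i,\epsilon}^2 \geq 0$, the first term on the left is pointwise nonnegative almost everywhere, which immediately yields
\[
\gamma_{2,\epsilon}\, u_\epsilon^{-\epsilon} \;\leq\; \gamma_{1,\epsilon}\, u_\epsilon^N \quad \text{a.e.,}
\]
and hence $u_\epsilon^{N+\epsilon} \geq \gamma_{2,\epsilon}/\gamma_{1,\epsilon}$ almost everywhere.

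Next I would plug in the uniform bounds from \eqref{gams}, namely $\gamma_{1,\epsilon}\leq c_1$ and $\gamma_{2,\epsilon}\geq c_2^{-1}\epsilon$, to obtain $u_\epsilon^{N+\epsilon} \geq (c_1 c_2)^{-1}\epsilon = C\epsilon$ almost everywhere, and then take the $(N+\epsilon)$-th root. Taking essential infimum over $M$ concludes $\text{ess inf}\, u_\epsilon \geq (C\epsilon)^{1/(N+\epsilon)}$, which is exactly \eqref{inf}.

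There is no real obstacle here: the only ingredients are the Euler equation, nonnegativity of $\sum c_{i,\epsilon}\phi_{i,\epsilon}^2$, and the two-sided $\epsilon$-bounds on $\gamma_{1,\epsilon}, \gamma_{2,\epsilon}$ already established in \eqref{gams} (which rest on Lemma \ref{EVbound} and the uniform bound \eqref{ube2}). The argument is simply an algebraic consequence of \eqref{Euler}; dropping the eigenfunction term is what makes the estimate $\epsilon$-degenerate, and it is precisely this degeneracy that forces us to track how $u_\epsilon$ may vanish as $\epsilon\to 0^+$ in the limiting arguments of Section \ref{TakingLimit}.
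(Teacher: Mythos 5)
Your argument is correct and is essentially identical to the paper's proof: both drop the nonnegative eigenfunction term in the Euler equation \eqref{Euler}, so that $\gamma_{1,\epsilon}u_\epsilon^N \ge \gamma_{2,\epsilon}u_\epsilon^{-\epsilon}$ pointwise a.e., and then invoke the bounds \eqref{gams} on $\gamma_{1,\epsilon}$ and $\gamma_{2,\epsilon}$ to conclude $u_\epsilon^{N+\epsilon}\ge C\epsilon$ a.e. No gaps; nothing further is needed.
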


\begin{proof}  Let $x \in M$ be such that $u_i(x)>0$.  Then at $x$, the Euler equation (\ref{Euler}) implies
\begin{align*}
\gamma_{1,\epsilon_i} u_i^N(x)
&=  u_i^{N-2}(x) \sum_{\alpha=1}^k c_{\alpha,\epsilon_i} \phi^{\alpha}_i(x)^2 + \gamma_{2,\epsilon_i}  u_i^{-\epsilon_i}(x) \\
&\geq  \gamma_{2,\epsilon_i}  u_i^{-\epsilon_i}(x) \\
&\geq C \epsilon_i u_i^{-\epsilon_i}(x).
\end{align*}
Since $\gamma_{1,\epsilon_i} \leq C$ by (\ref{gams}), we conclude
\begin{align*}
u_i^{\epsilon_i + N}(x) \geq C \epsilon_i,
\end{align*}
as claimed.
\end{proof}

Combining the preceding results, we now show that $u_i \in C^{1,\mu}(M^n)$.  To see this, divide through the Euler equation (\ref{Euler}) by $u_i^{N-2}$ and rearrange terms to obtain
\begin{align} \label{NewEuler}
\gamma_{1,\epsilon_i} u_i^2 - \gamma_{2,\epsilon_i}  u_i^{-\epsilon_i - N + 2} = \Phi_i,
\end{align}
where
 \begin{align} \label{Phi}
 \Phi_i = \sum_{\alpha=1}^k c_{\alpha,\epsilon_i} (\phi^{\alpha}_i)^2.
 \end{align}
 If we define $f_i : (0,\infty) \rightarrow \mathbb{R}$ by
 \begin{align*}
 f_i(t) := \gamma_{1,\epsilon_i} t^2 -\gamma_{2,\epsilon_i} t^{-\epsilon_i - N + 2},
 \end{align*}
 then $f_i \in C^{\infty}(\mathbb{R}^{+})$, and by (\ref{NewEuler}),
 \begin{align} \label{new2}
 f(u_i) = \Phi_i.
 \end{align}
Since $f_i$ is strictly increasing on $(0,\infty)$, its inverse exists and we can write
 \begin{align*}
 u_i = f_i^{-1} \circ \Phi_i.
 \end{align*}
This means that $u_i$ can be written as the composition of two $C^{1,\mu}$-functions.  Note that we cannot obtain uniform estimates for the $C^{1,\mu}$ norm of $u_i$, since the derivative of $f_i^{-1}(t)$ can be quite large when $t$ is small; that is, on a set where $\Phi_i$ is small, the $C^{1,\mu}$-norm of $u_i$ will be large.   However, we can prove that $u_i$ has a uniform Lipschitz bound:

\begin{lemma} \label{C1Lemma}  There is constant $C = C(g)$, independent of $\epsilon$, such that
\begin{align} \label{C12}
|\nabla_g u_i| \leq C.
\end{align}
\end{lemma}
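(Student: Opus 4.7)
The plan is to exploit the implicit relation $u_\epsilon = f^{-1}\circ \Phi_\epsilon$ from (\ref{NewEuler}), where $f(t) = \gamma_{1,\epsilon} t^2 - \gamma_{2,\epsilon} t^{-\epsilon - N + 2}$ is smooth and strictly increasing on $(0,\infty)$, so that $\nabla u_\epsilon = \nabla \Phi_\epsilon / f'(u_\epsilon)$. Since $u_\epsilon$ has only an $\epsilon$-dependent lower bound, a priori $f'(u_\epsilon)$ could be small and the derivative of $f^{-1}$ could blow up; the task is therefore to show that $\nabla \Phi_\epsilon$ is correspondingly small precisely where $u_\epsilon$ is small.

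The key step will be a self-improvement inequality for $\Phi_\epsilon = \sum_i c_{i,\epsilon} \phi_{i,\epsilon}^2$. Because each $\phi_{i,\epsilon}$ is uniformly bounded in $C^{1,\alpha}$ by Corollary \ref{efCor}, and the constraint $\sum c_{i,\epsilon}^2 = 1$ together with $k \leq \nu([g])$ gives $\sum c_{i,\epsilon} \leq \sqrt{\nu([g])}$, two applications of Cauchy--Schwarz starting from $|\nabla \Phi_\epsilon| \leq 2 \sum_i c_{i,\epsilon} |\phi_{i,\epsilon}|\,|\nabla \phi_{i,\epsilon}|$ should produce a pointwise bound
\[
|\nabla \Phi_\epsilon|^2 \leq C \, \Phi_\epsilon,
\]
with $C$ independent of $\epsilon$. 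In particular, $\nabla \Phi_\epsilon$ vanishes on the closed set $\{\Phi_\epsilon = 0\}$.

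To finish, I would throw away the second (nonnegative) term of $f'$ to get $f'(u_\epsilon) \geq 2\gamma_{1,\epsilon} u_\epsilon$, and use the Euler equation in the form $\gamma_{1,\epsilon} u_\epsilon^2 = \Phi_\epsilon + \gamma_{2,\epsilon} u_\epsilon^{-\epsilon-N+2} \geq \Phi_\epsilon$ to deduce $u_\epsilon \geq \sqrt{\Phi_\epsilon/\gamma_{1,\epsilon}}$. Combining these on $\{\Phi_\epsilon > 0\}$,
\[
|\nabla u_\epsilon| \leq \frac{|\nabla \Phi_\epsilon|}{2\gamma_{1,\epsilon} u_\epsilon} \leq \frac{C\sqrt{\Phi_\epsilon}}{2\gamma_{1,\epsilon}\sqrt{\Phi_\epsilon/\gamma_{1,\epsilon}}} = \frac{C}{2\sqrt{\gamma_{1,\epsilon}}} \leq C,
\]
using the uniform estimate $\gamma_{1,\epsilon} \geq 1$ from (\ref{gams}); the cancellation of $\sqrt{\Phi_\epsilon}$ is the crucial point, and it is exactly what removes the $\epsilon$-dependence. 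On $\{\Phi_\epsilon = 0\}$ both numerator and (by the $C^{1,\alpha}$ regularity of $u_\epsilon$ noted before the lemma) $\nabla u_\epsilon$ vanish, so the bound extends to all of $M$.

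The main obstacle is conceptual rather than technical: the bad case is a point where $\Phi_\epsilon$ approaches zero, since there $u_\epsilon$ may approach its $\epsilon$-dependent floor $(C\epsilon)^{1/(\epsilon+N)}$ and $f'(u_\epsilon)$ may degenerate. The self-improvement estimate $|\nabla \Phi_\epsilon| \leq C\sqrt{\Phi_\epsilon}$, which uses nothing beyond the structure of $\Phi_\epsilon$ as a controlled sum of squares, is precisely the cancellation that tames this singular behavior and produces the $\epsilon$-independent Lipschitz bound.
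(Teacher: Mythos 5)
Your argument is correct and is essentially the paper's proof: both differentiate $f(u_\epsilon)=\Phi_\epsilon$, use Cauchy--Schwarz together with Corollary \ref{efCor} to get $|\nabla_g\Phi_\epsilon|\le C\,\Phi_\epsilon^{1/2}$, invoke the Euler equation (\ref{NewEuler}) to compare $\Phi_\epsilon$ with $\gamma_{1,\epsilon}u_\epsilon^2$, and use $f'(t)\ge 2\gamma_{1,\epsilon}t$ with $\gamma_{1,\epsilon}\ge 1$. The only cosmetic difference is that you cancel $\sqrt{\Phi_\epsilon}$ where the paper converts $\Phi_\epsilon^{1/2}\le C u_\epsilon$ and cancels $u_\epsilon$; these are the same cancellation.
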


\begin{proof}  Differentiating both sides of (\ref{new2}) we find
\begin{align} \label{new3}
f'(u_i)\nabla_g u_i = \nabla_g \Phi_i,
\end{align}
and, therefore,
\begin{equation}
f_i'(u_i) |\nabla_g u_i| = |\nabla_g \Phi_i|.
\end{equation}
Now, from (\ref{Phi}) and Cauchy-Schwarz inequality,
\begin{equation}
|\nabla_g \Phi_i|\le 2\Phi_i^{\frac{1}{2}}\left(\sum_{\alpha=1}^k|\nabla_g\phi^{\alpha}_i|^2\right)^{\frac{1}{2}} \le C \Phi_i^{\frac{1}{2}},
\end{equation}
where the last inequality follows from Corollary \ref{efCor}. Using (\ref{NewEuler}) we deduce
\begin{equation}
|\nabla_g\Phi_i|\le C u_i.
\end{equation}
On the other hand, for $t > 0$,
\begin{align*}
f_i'(t) = 2 \gamma_{1,\epsilon_i} t + \gamma_{2,\epsilon_i} (\epsilon_i + N - 2) t^{-\epsilon_i - N + 1} \geq 2 t,
\end{align*}
and therefore
\begin{align*}
2u_i |\nabla_g u_i| \le f_i'(u_i)|\nabla_g u_i| = |\nabla_g \Phi_i| \le C u_i.
\end{align*}
This finishes the proof.
\end{proof}

 The following lemma is needed in the proof of Corollary \ref{keycor} below, and will also be key when taking the limit $\epsilon \to 0^+$ in the next section.

\begin{lemma} \label{Int2}
\begin{align} \label{int2}
\epsilon_i \int_M u_i^{-\epsilon_i - N} \, dv_g \leq C,
\end{align}
where $C$ only depends on $g$.
\end{lemma}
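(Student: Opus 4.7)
The key observation is that the Euler equation (\ref{Euler}) already contains the information we need, essentially giving a pointwise bound for $u_\epsilon^{-\epsilon-N}$. The plan is as follows.

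Starting from (\ref{Euler}), I would isolate the regularizing term: rewriting gives
\begin{equation*}
\gamma_{2,\epsilon}\, u_\epsilon^{-\epsilon} \;=\; \gamma_{1,\epsilon}\, u_\epsilon^{N} \;-\; u_\epsilon^{N-2}\,\Phi_\epsilon,
\end{equation*}
where $\Phi_\epsilon = \sum_{i=1}^k c_{i,\epsilon}\phi_{i,\epsilon}^2 \geq 0$. Since $u_\epsilon > 0$ almost everywhere (because $u_\epsilon \in \mathcal{D}_\epsilon$ forces $u_\epsilon^{-\epsilon} \in L^1$), and the second term on the right is nonnegative, we obtain the pointwise inequality
\begin{equation*}
\gamma_{2,\epsilon}\, u_\epsilon^{-\epsilon} \;\leq\; \gamma_{1,\epsilon}\, u_\epsilon^{N} \qquad \text{a.e. on } M.
\end{equation*}
Dividing both sides by $u_\epsilon^N$ gives
\begin{equation*}
u_\epsilon^{-\epsilon-N} \;\leq\; \frac{\gamma_{1,\epsilon}}{\gamma_{2,\epsilon}} \qquad \text{a.e. on } M.
\end{equation*}

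Next, I would invoke the uniform bounds (\ref{gams}) on the Lagrange multipliers: $\gamma_{1,\epsilon} \leq c_1$ and $\gamma_{2,\epsilon} \geq c_2^{-1}\epsilon$, both with constants depending only on $g$. Substituting these yields
\begin{equation*}
u_\epsilon^{-\epsilon-N} \;\leq\; \frac{c_1 c_2}{\epsilon} \qquad \text{a.e. on } M,
\end{equation*}
equivalently $\epsilon\, u_\epsilon^{-\epsilon-N} \leq c_1 c_2$ pointwise a.e.

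Finally, integrating this pointwise estimate over $M$ against $dv_g$ produces
\begin{equation*}
\epsilon \int_M u_\epsilon^{-\epsilon-N}\,dv_g \;\leq\; c_1 c_2 \cdot \mathrm{Vol}(M,g) \;\leq\; C,
\end{equation*}
with $C = C(g)$ as desired. There is essentially no obstacle here once one recognizes that the contribution of $u_\epsilon^{N-2}\Phi_\epsilon$ can be dropped for free by positivity, converting the Euler equation into a direct pointwise upper bound; the only mild subtlety is ensuring the division by $u_\epsilon^N$ is justified a.e., which follows from $u_\epsilon \in \mathcal{D}_\epsilon$.
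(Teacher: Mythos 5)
Your proposal is correct and follows essentially the same route as the paper: both arguments rest on the Euler equation (\ref{Euler}), discarding the nonnegative term $u_\epsilon^{N-2}\Phi_\epsilon$, and the bounds (\ref{gams}) on $\gamma_{1,\epsilon},\gamma_{2,\epsilon}$; the paper simply divides by $u_\epsilon^N$ and integrates before dropping that term, while you drop it pointwise first. Note that your intermediate pointwise bound $\epsilon\, u_\epsilon^{-\epsilon-N}\le C$ is exactly the content of Lemma \ref{u-lower}, so you could also have cited that lemma and integrated directly.
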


\begin{proof} If we divide both sides of (\ref{Euler}) by $u_i^N$ and integrate, we get
\begin{align*}
 \gamma_{2,\epsilon_i} \int_M  u_i^{-\epsilon_i- N} \, dv_g = \gamma_{1,\epsilon_i} \int_M \, dv_g -  \int_M u_i^{-2} \sum_{\alpha=1}^k c_{\alpha,\epsilon_i} (\phi^{\alpha}_i)^2 \, dv_g  \leq \gamma_{1,\epsilon_i}  \leq C.
\end{align*}
Since $\gamma_{2,\epsilon_i} \geq c_2^{-1} \epsilon_i$ by (\ref{gams}), the result follows.
\end{proof}

\begin{corollary}  \label{keycor}  Let $\eta \in C^{\infty}(M^n)$.  Then
\begin{align} \label{asy}
\int_M \eta \left\{ \gamma_{1,\epsilon_i} u_i^2 - \sum_{\alpha=1}^k c_{\alpha,\epsilon} (\phi^{\alpha}_i)^2 \right\} \, dv_g = O(\epsilon^{\beta_i}) \| \eta \|_{L^{\infty}(M^n)},  \ \ \  i \to \infty,
\end{align}
where
\begin{align} \label{beta}
\beta_i:= \frac{2}{\epsilon_i + N} > 0.
\end{align}
Moreover,
\begin{align} \label{gammalim}
\gamma_{1,\epsilon_i} = 1 + O(\epsilon_i), \ \ i \to \infty.
\end{align}
\end{corollary}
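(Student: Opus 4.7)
The key is to recognize that the Euler-Lagrange equation (\ref{Euler}) gives a direct expression for the integrand. After dividing through by $u_\epsilon^{N-2}$, one obtains
\begin{equation*}
\gamma_{1,\epsilon} u_\epsilon^2 - \sum_{i=1}^k c_{i,\epsilon} \phi^2_{i,\epsilon} = \gamma_{2,\epsilon} \, u_\epsilon^{-\epsilon - N + 2},
\end{equation*}
so the quantity in braces in (\ref{asy}) equals $\gamma_{2,\epsilon} u_\epsilon^{-\epsilon - N + 2}$ pointwise. The plan is then to show this right-hand side is pointwise of order $\epsilon^{\beta_\epsilon}$ and integrate.

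To do this, first use the pointwise lower bound from Lemma \ref{u-lower}: $u_\epsilon \geq (C\epsilon)^{1/(\epsilon+N)}$ almost everywhere. Since $-\epsilon - N + 2 < 0$, raising to this negative exponent reverses the inequality and yields
\begin{equation*}
u_\epsilon^{-\epsilon - N + 2} \leq (C\epsilon)^{(2 - \epsilon - N)/(\epsilon + N)}
\end{equation*}
almost everywhere. Combining with the upper bound $\gamma_{2,\epsilon} \leq c_2\epsilon$ from (\ref{gams}), the exponents of $\epsilon$ add to $1 + (2-\epsilon-N)/(\epsilon+N) = 2/(\epsilon+N) = \beta_\epsilon$, while the residual constant powers remain bounded. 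Thus
\begin{equation*}
0 \leq \gamma_{2,\epsilon} \, u_\epsilon^{-\epsilon - N + 2} \leq C\, \epsilon^{\beta_\epsilon}
\end{equation*}
almost everywhere, for a constant $C = C(g)$ independent of $\epsilon$. Multiplying by $\eta$, taking absolute values, and integrating against $dv_g$ (whose total mass is $1$) gives the estimate (\ref{asy}).

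For the second assertion (\ref{gammalim}), I would simply read off the definition of $\gamma_{1,\epsilon}$ in (\ref{gammas}) and combine three uniform estimates: the lower bound $|\lambda_2(u_\epsilon)| \geq C > 0$ from Lemma \ref{EVbound}, the integral estimate $\int_M u_\epsilon^{-\epsilon}\, dv_g \leq C$ from (\ref{ube2}), and the explicit factor of $\epsilon$. This immediately yields $|\gamma_{1,\epsilon} - 1| \leq C\epsilon$.

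There is no real obstacle here: the work has been done in the earlier pointwise bounds (Lemmas \ref{u-lower}, \ref{EVbound}) and in the definitions (\ref{gammas}). The only subtlety worth double-checking is that the constant power $(C\epsilon)^{(2-\epsilon-N)/(\epsilon+N)}$ appearing after applying Lemma \ref{u-lower} can be split into $\epsilon^{\beta_\epsilon - 1}$ times a factor that stays bounded as $\epsilon \to 0^+$ (since the exponent $(2-\epsilon-N)/(\epsilon+N)$ converges to $(2-N)/N$); this is what makes the powers of $\epsilon$ match exactly.
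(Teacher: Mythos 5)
Your proof is correct, but at the crucial estimation step it takes a different route than the paper. Both arguments start from the same pointwise identity $\gamma_{1,\epsilon}u_\epsilon^2-\sum_i c_{i,\epsilon}\phi_{i,\epsilon}^2=\gamma_{2,\epsilon}u_\epsilon^{-\epsilon-N+2}$ obtained by dividing the Euler equation (\ref{Euler}) by $u_\epsilon^{N-2}$. The paper then integrates against $\eta$ and controls the right-hand side by H\"older's inequality with exponents $\tfrac{\epsilon+N}{\epsilon+N-2}$ and $\tfrac{\epsilon+N}{2}$, invoking the integral bound $\epsilon\int_M u_\epsilon^{-\epsilon-N}\,dv_g\le C$ of Lemma \ref{Int2}; you instead use the pointwise lower bound $u_\epsilon\ge (C\epsilon)^{1/(\epsilon+N)}$ of Lemma \ref{u-lower} together with $\gamma_{2,\epsilon}\le c_2\epsilon$ from (\ref{gams}) to get the a.e.\ bound $0\le\gamma_{2,\epsilon}u_\epsilon^{-\epsilon-N+2}\le C\epsilon^{\beta_\epsilon}$, and then simply integrate (your bookkeeping of the exponent, $1+\tfrac{2-\epsilon-N}{\epsilon+N}=\beta_\epsilon$, and of the residual constant $C^{(2-\epsilon-N)/(\epsilon+N)}$, which stays bounded for $\epsilon$ in a bounded range, is correct). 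Your route is more elementary and in fact yields the stronger statement that the integrand itself is $O(\epsilon^{\beta_\epsilon})$ in $L^\infty$, not just its integral; the paper's H\"older/Lemma \ref{Int2} route is less dependent on the sharp pointwise control of $u_\epsilon$ and would survive in situations where only the integral estimate is available, which is presumably why it is phrased that way. For (\ref{gammalim}) your argument (definition of $\gamma_{1,\epsilon}$ in (\ref{gammas}) plus Lemma \ref{EVbound} and (\ref{ube2})) coincides with the paper's.
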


\begin{proof} We multiply through the Euler equation (\ref{Euler}) by $\eta u_i^{2-N}$ to get
\begin{align} \label{f1}
\int_M \eta \left\{ \gamma_{1,\epsilon_i} u_i^2 - \sum_{\alpha=1}^k c_{\alpha,\epsilon_i} (\phi^{\alpha}_i)^2 \right\} \, dv_g = \gamma_{2,\epsilon_i} \int_M \eta u_i^{- \epsilon_i - (N-2)} \, dv_g.
\end{align}
Now, the term on the right can be estimated via H\"older's inequality as follows:
\begin{align*}
\left| \gamma_{2,\epsilon_i} \int_M \eta u_i^{2 - (\epsilon_i + N)} \, dv_g \right| &\leq \gamma_{2,\epsilon_i} \| \eta \|_{L^{\infty}} \int_M u_i^{- \epsilon_i - (N-2)} \, dv_g \\
&\leq C \epsilon_i  \| \eta \|_{L^{\infty}} \left( \int_M u_i^{- \epsilon_i - N} \, dv_g \right)^{\frac{ \epsilon_i + N - 2}{\epsilon_i + N}} \left( \int_M \, dv_g \right)^{\frac{2}{\epsilon_i + N}} \\
&= C  \epsilon^{\frac{2}{\epsilon_i + N}} \| \eta \|_{L^{\infty}}\left( \epsilon_i \int_M u_i^{- \epsilon_i - N} \, dv_g \right)^{\frac{ \epsilon_i + N - 2}{\epsilon_i + N}}.
\end{align*}
To finish the proof of the first part of the statement, note that the integral in parentheses above is bounded by Lemma \ref{Int2}. Therefore,
\begin{align*}
\left| \gamma_{2,\epsilon_i} \int_M \eta u_i^{2 - (\epsilon_i + N)} \, dv_g \right| \leq C \epsilon_i^{\beta_i}  \| \eta \|_{L^{\infty}},
\end{align*}
where $\beta_i$ is given by (\ref{beta}).  The estimate for $\gamma_{1,\epsilon_i}$ is immediate from (\ref{ube2}) and (\ref{gams}).
\end{proof}

\vskip.2in
\section{Taking the limit $\epsilon \rightarrow 0$}\label{TakingLimit}

As in the previous section, let $\epsilon_i \rightarrow 0$, and $u_i = u_{\epsilon_i}$ be maximal for $F_{2,\epsilon_i}$.  By Lemma \ref{C1Lemma} we may assume that, along some subsequence,
\begin{align}  \label{usubc}
u_i \rightarrow \bar{u} \ \ \mbox{in } C^{\mu_0}(M^n),
\end{align}
for some $\mu_0\in(0,1)$. Since $\| u_i \|_{L^N} = 1$, it follows that
\begin{align} \label{VB}
\int_M \bar{u}^N \, dv_g = 1.
\end{align}
If we let $\{ \phi^{\alpha}_i  \}_{\alpha=1}^k$ be the corresponding set of eigenfunctions associated to $\lambda_2(u_i)$,  then each eigenfunction satisfies
\begin{align} \label{efef}
L_g \phi^{\alpha}_i = \lambda_2(u_i) \phi^{\alpha}_i u_i^{N-2},
\end{align}
with the normalization
\begin{align} \label{normlast}
\int_M (\phi^{\alpha}_i)^2 u_i^{N-2} \, dv_g = 1.
\end{align}
Thanks to Corollary \ref{efCor}, by further restricting our subsequence if necessary, we may assume
\begin{equation} \label{eflim}
\phi^{\alpha}_i \rightarrow \bar{\phi}^{\alpha} \ \ \mbox{in } C^{1,\mu_0}(M^n),
\end{equation}
and $\bar{\phi}^{\alpha}$ satisfies
\begin{align} \label{bpPDE}
L_g \bar{\phi}^{\alpha} = \bar{\lambda_2} \bar{\phi}^{\alpha} \bar{u}^{N-2},
\end{align}
where $\bar{\lambda}_2 = \lim_{i \to \infty} \lambda_2(u_i).$  By elliptic regularity and (\ref{usubc}), we see that $\bar{\phi}^{\alpha} \in C^{2,\mu_0}(M)$.

Furthermore, by Corollary \ref{keycor}, for any $\eta \in C^{\infty}(M^n)$ we have
\begin{equation} \label{asy2}
\lim_{i \to \infty} \int \eta \left\{ \gamma_{1,\epsilon_i} u_i^2 - \sum_{\alpha=1}^k c_{\alpha,\epsilon_i} (\phi^{\alpha}_i)^2 \right\} \, dv_g = 0.
\end{equation}
Again restricting to subsequences when necessary, we can assume $c_{\alpha,\epsilon_i} \rightarrow \bar{c}_{\alpha}$, and by (\ref{asy2}) we have
\begin{align} \label{NewEuler2}
\bar{u}^2 - \sum_{\alpha=1}^k \bar{c}_{\alpha} (\bar{\phi}^{\alpha})^2 = 0.
\end{align}
Note that some of the $\bar{c}_{\alpha}$'s could be zero, but not all can vanish since $\sum_{\alpha=1}^kc_{\alpha,\epsilon_i} = 1$.

We claim that $\bar{u} \in L^N_{>0}(M)$.  For, if $\bar{u}$ vanished on a set of positive measure $E$, then (\ref{NewEuler2}) would imply that $\bar{\phi}^{\alpha} = 0$ on $E$ whenever $\bar{c}_{\alpha} \neq 0$.  By unique continuation, $\bar{\phi}^{\alpha} \equiv 0$ on $M$, a contradiction.  In fact, locally the zero locus $\{ \bar{u} = 0 \}$ can be decomposed into a (possibly empty) $C^1$-submanifold of dimension $(n-1)$ with finite volume given by $\{ \bar{u} = 0 \} \cap \{ |\nabla \bar{u}| > 0 \}$, and a closed set  of the form $\{ \bar{u} = 0 \} \cap \{ |\nabla \bar{u}| = 0 \}$ with dimension $\leq n-2$ (see \cite{HS}, and references therein).  In particular, $\bar{u} \in L^N_{>0}(M)$, and by Proposition \ref{Cpropplus}, $\bar{\lambda}_2 = \lambda_2(\bar{u})$, and each $\bar{\phi}^{\alpha} \in E_2(\bar u)$.

What remains to be discussed, in order to finish the proof of Theorem \ref{MainTheorem}, is the maximality of $\bar u$ and its regularity outside its zero set. We address these aspects below.


\begin{proof}[Proof of Theorem \ref{MainTheorem}]
 Let $\bar{u} \in L^N_{>0}$ be as constructed above, and notice that $\|\bar u\|_{L^N}$.  The proof is by contradiction: if $\bar{u}$ is not maximal, then there is a
$w \in L^N_{>0}$ with $\| w \|_{L^N} = 1$ and
\begin{align*}
\lambda_2(w) = \lambda_2(\bar{u}) + \eta_0
\end{align*}
for some $\eta_0 > 0$ small.

Let $w_{\delta} := \sup \{ w , \delta \}$ for $0<\delta<1$. We claim that we can fix $\delta > 0$ small enough such that
\begin{align} \label{claim1}
 \lambda_2(w_{\delta}) \left( \int_M w_{\delta}^N\;dv_g \right)^{\frac{N-2}{N}} \geq \lambda_2(\bar{u}) + \frac{1}{2} \eta_0.
\end{align}
To prove (\ref{claim1}), we first observe that since $w_{\delta} \geq w$, then by comparing the Rayleigh quotient we get
\begin{equation}
\mathcal{R}_g^{w_\delta}(\phi)\ge \mathcal{R}_g^{w}(\phi)
\end{equation}
as we are allow to take as test functions those $\phi\in W^{1,2}$ for which
\begin{equation}
\int_M\{|\nabla_g\phi|^2+c_nR_g\phi^2\}\;dv_g <0.
\end{equation}
This is because $\nu(w_\delta) = \nu(w) = \nu([g])$ by Proposition \ref{NegativeEigen}. The min-max characterization then yields
\begin{align*}
\lambda_2(w_{\delta}) \geq \lambda_2(w).
\end{align*}
As for the volume factor, we have
\begin{align*}
\int_M w_{\delta}^N\;dv_g &= \int_{\{ w \leq \delta\} } w_{\delta}^N\;dv_g + \int_{\{w > \delta\}} w_{\delta}^N\;dv_g
= \int_{\{w \leq \delta\}} \delta^N\;dv_g + \int_{\{w > \delta\}} w^N\;dv_g \\
& \leq \delta^N + 1.
\end{align*}
Since $( 1 + \delta^N)^{\frac{N-2}{N}} \leq 1 + a_n \delta^N$ for some $a_n > 0$, it follows that
\begin{align*}
\left( \int_M w_{\delta}^N\;dv_g \right)^{\frac{N-2}{N}} \leq 1 + a_n \delta^N.
\end{align*}
Therefore,
\begin{align*}
 \lambda_2(w_{\delta}) \left( \int_M w_{\delta}^N\;dv_g \right)^{\frac{N-2}{N}} &\geq ( 1 + a_n \delta^N) \big( \lambda_2(\bar{u}) + \eta_0 \big) \\
 &\geq \lambda_2(\bar{u}) + \eta_0 - a_n \delta^N |\lambda_2(\bar{u})|.
\end{align*}
Inequality (\ref{claim1}) follows after choosing $\delta=\delta(|\lambda_2(\bar u)|)>0$ small enough.

Notice that $w_\delta$ is in $\mathcal{D}_\epsilon$ for all $\epsilon > 0$.  By (\ref{claim1}),
\begin{align} \label{Fw} \begin{split}
F_{2,\epsilon}(u_{\epsilon}) &\geq F_{2,\epsilon}(w_{\delta}) \\
&=  \lambda_2(w_{\delta}) \left( \int_M w_{\delta}^N\;dv_g \right)^{\frac{N-2}{N}} - \left( \int_M w_{\delta}^{-\epsilon} \;dv_g\right) \left( \int_M w_{\delta}^N\;dv_g \right)^{\frac{\epsilon}{N}} \\
&\geq \lambda_2(\bar{u}) + \frac{1}{2} \eta_0 - \left(\int_M w_{\delta}^{-\epsilon}\;dv_g \right) \left( \int_M w_{\delta}^N\;dv_g \right)^{\frac{\epsilon}{N}}.
\end{split}
\end{align}
Let $u_i = u_{\epsilon_i}$ be our sequence as before; then we know
\begin{equation}\label{Fep}
\limsup_{\epsilon_i \to 0} \lambda_2(u_i) = \lambda_2(\bar{u}).
\end{equation}
We also claim that
\begin{equation}\label{Fep2}
\lim_{i \to 0} \int u_i^{-\epsilon_i}\;dv_g \rightarrow 1.
\end{equation}
To see why, we use Lemma \ref{Int2}:
\begin{align*}
\int_M u_i^{-\epsilon_i}\;dv_g &\leq \left( \int_M u_i^{-N-\epsilon_i}\;dv_g \right)^{\frac{\epsilon_i}{\epsilon_i + N}} \text{Vol}(M^n,g)^{\frac{N}{N + \epsilon_i}} \\
&\leq \left( C \epsilon_i^{-1} \right)^{\frac{\epsilon_i}{\epsilon_i + N}}   \ \ \ \ \mbox{ (since $\text{Vol}(M^n,g) = 1$)} \\
&= C^{\frac{\epsilon_i}{\epsilon_i + N}} \epsilon_i^{\frac{- \epsilon_i}{\epsilon_i + N}} \\
&\rightarrow 1, \ \ i \to \infty.
\end{align*}
Therefore,
\begin{align} \label{lequ}
\lim_{i \to\infty} \int_M u_i^{-\epsilon_i}\;dv_g \leq 1.
\end{align}
 Also,
\begin{align*}
1 = \int_M 1 \, dv_g = \int_M u_i^{-\frac{\epsilon_i}{2}} u_i^{\frac{\epsilon_i}{2}} \;dv_g \leq \left( \int_M u_i^{-\epsilon_i}\;dv_g \right)^{\frac{1}{2}} \left( \int_M u_i^{\epsilon_i}\;dv_g \right)^{\frac{1}{2}},
\end{align*}
and since $u_i$ is uniformly bounded, this implies
\begin{align} \label{gequ}
\lim_{i\to \infty}\int_M u_i^{-\epsilon_i}\;dv_g \geq 1.
\end{align}
Formula (\ref{Fep2}) follows then from (\ref{lequ}) and (\ref{gequ}).

An immediate consequence of (\ref{Fep}) and (\ref{Fep2}) is that
\begin{align} \label{conc}
F_{2, \epsilon_i}(u_i) \rightarrow \lambda_2(\bar{u}) - 1.
\end{align}
Recall that by (\ref{Fw}),
\begin{align} \label{limF2}
F_{2,\epsilon_i}(u_i) \geq \lambda_2(\bar{u}) + \frac{1}{2} \eta_0 - \left( \int_M w_{\delta}^{-\epsilon_i}\;dv_g \right) \left( \int_M w_{\delta}^N\;dv_g \right)^{\frac{\epsilon_i}{N}}.
\end{align}
Taking the limit as $\epsilon_i \rightarrow 0$, using the fact that $w_{\delta} \geq \delta > 0$, and plugging in (\ref{conc}), we conclude
\begin{align}
\lambda_2(\bar{u}) - 1 \geq \lambda_2(\bar{u}) + \frac{1}{2} \eta_0 - 1.
\end{align}
This is a contradiction.

The regularity of $\bar u$ outside its zero set follows by the fact that each $\bar \phi_i$ is in $C^{2,\mu_0}(M^n)$ and a standard bootstrap argument. This finishes the proof of our main result.
\end{proof}

\medskip

\begin{proof}[Proof of Corollary \ref{NodalHarmonic}]
Let $\bar u\in C^{\mu_0}(M^n)\cap C^\infty(M^n\setminus\{\bar u=0\})$ be the maximal function given by Theorem \ref{MainTheorem}. In the case where $k=1$, from(\ref{NewEuler2}) we see that $\bar u = |\bar \phi|$ for some $\bar \phi \in E_2(\bar u)\cap C^{2,\mu_0}(M^n)$, hence by (\ref{bpPDE}) we have
\begin{equation}
L_g\bar\phi = \lambda_2(\bar u) \bar \phi |\bar \phi|^{N-2}.
\end{equation}
If $\phi_1 \in E_1(\bar{u})$ denotes a first generalized eigenfunction, then we may assume $\phi_1 \geq 0$ a.e.  Also, by Proposition \ref{efProp},
\begin{align*}
\int_M \phi_1 \phi \bar{u}^{N-2} \, dv_g =0.
\end{align*}
It follows that $\phi$ must change sign; i.e., it is a nodal solution of the Yamabe equation.

We now assume that $k>1$.  On the open set $M^n\setminus\{\bar u =0\}$, $g_{\bar u}:= \bar u^{N-2}g$ is a smooth Riemannian metric. If we define $\psi^\alpha$ by
\begin{equation}
\psi^\alpha := \frac{\bar c_\alpha^{\frac{1}{2}}\bar\phi^\alpha}{\bar u},
\end{equation}
then from the Euler equation (\ref{NewEuler2}) we get
\begin{equation}\label{NewEuler3}
1 = \sum_{\alpha=1}^k (\psi^\alpha)^2.
\end{equation}
Also, by conformal invariance (\ref{Conf-Invar}), these new functions satisfy
\begin{equation}\label{NewEigenvalueEq}
L_{g_{\bar u}}(\psi^\alpha) = \lambda_2(\bar u) \psi^\alpha,
\end{equation}
for each $\alpha\in\{1,\cdots, k\}$.

We apply $L_{g_{\bar u}}$ on both sides of (\ref{NewEuler3}) to obtain
\begin{equation}
\begin{split}
c_nR_{g_{\bar u}} & = L_{g_{\bar u}}(1) =  \sum_{\alpha=1}^kL_{g_{\bar u}}((\psi^\alpha)^2) \\ & = \sum_{\alpha=1}^k\left(2\psi^\alpha L_{g_{\bar u}}\psi^\alpha - 2|\nabla_{g_{\bar u}}\psi^\alpha|^2 - c_nR_{g_{\bar u}}(\psi^\alpha)^2\right) \\ & = 2 \lambda_2(\bar u)\left(\sum_{\alpha=1}^k(\psi^\alpha)^2\right) - 2\sum_{\alpha=1}^k|\nabla_{g_{\bar u}}\psi^\alpha|^2 - c_nR_{g_{\bar u}}\left(\sum_{\alpha=1}^k(\psi^\alpha)^2\right) \\ & = 2\lambda_2(\bar u) - 2\sum_{\alpha=1}^k|\nabla_{g_{\bar u}}\psi^\alpha|^2 - c_nR_{g_{\bar u}},
\end{split}
\end{equation}
therefore,
\begin{equation}
\lambda_2(\bar u) = \sum_{\alpha=1}^k|\nabla_{g_{\bar u}}\psi^\alpha|^2 + c_n R_{g_{\bar u}}.
\end{equation}
Plugging this expression back into (\ref{NewEigenvalueEq}) gives us
\begin{equation}
-\Delta_{g_{\bar u}} \psi_\beta = \left(\sum_{\alpha=1}^k|\nabla_{g_{\bar u}}\psi^\alpha|^2\right)\psi_\beta
\end{equation}
for each $\beta\in\{1,\cdots,k\}$. Therefore, the map $U$ defined in (\ref{Harmonic}) is weakly harmonic; see \cite{Helein} and references therein for further details. Since $U$ is $C^2$ on $M^n\setminus\{\bar u=0\}$, the proof is complete.
\end{proof}

\begin{remark}\label{Simplicity} As a concluding remark we would like to explain a key difference between our setting and the setting of Ammann-Humbert in \cite{Ammann}. In their work, they assume the conformal class has non-negative Yamabe invariant, and prove the existence of a generalized conformal factor $u$ minimizing the second eigenvalue of the conformal laplacian.   Moreover, by a clever argument they show that the multiplicity of $\lambda_2(u)$ is always one.   We now explain how this fact follows from variational considerations.

Let $u_e\in L^N_{>0}$ be an extremal for $F_2$, normalized such that
\begin{equation}
\int_M u_e^N\; dv_g = 1.
\end{equation}
For any $h\in L^\infty$, we deform $u_e$ as usual by $u_{e,t} = u_e(1+th)$. Proposition \ref{OSD} still holds in this case and can be proven by similar techniques. Since $\lambda_2(u_\epsilon)>0$, $-(N-2)\lambda_2(u_e) <0$ and we obtain
\begin{equation}\label{min1}
\frac{d}{dt} F_2(u_{e,t}) \Big{|}_{t=0^+}= (N-2)\lambda_2(u_e)\left\{\int_M hu_e^N\;dv_g -\sup_{\phi}\int_M h\phi^2 u_e^{N-2}\;dv_g\right\},
\end{equation}
and
\begin{equation}\label{min2}
\frac{d}{dt} F_2(u_{e,t})\Big{|}_{t=0^-} = (N-2)\lambda_2(u_e)\left\{\int_M hu_e^N\;dv_g - \inf_{\phi}\int_M h\phi^2 u_e^{N-2}\;dv_g\right\},
\end{equation}
where the supremum and infimum are being taken over all $\phi \in E_2(u_e)$ with unit $L^2(u_e)$-norm.

In particular, if $u_e$ is minimal, meaning that
\begin{equation}
\frac{d}{dt} F_2(u_{e,t}) \Big{|}_{t=0^-} \le 0 \le \frac{d}{dt} F_2(u_{e,t})\Big{|}_{t=0^+}
\end{equation}
for any deformation $u_{e,t}$, then from (\ref{min1}), (\ref{min2}), and the fact that $\lambda_2(u_e)>0$, we get
\begin{equation}\label{min3}
\sup_{\phi}\int_M h\phi^2 u_e^{N-2}\;dv_g \le \inf_{\phi}\int_M h\phi^2 u_e^{N-2}\;dv_g.
\end{equation}
Therefore,
\begin{equation}\label{min4}
\sup_\phi \int_M h\phi^2u_e^{N-2}\;dv_g = \inf_\phi \int_M h\phi^2 u_e^{N-2}\;dv_g
\end{equation}
for all $h\in L^\infty$. If we are given two functions $\phi_1$ and $\phi_2$ in $E_2(u_e)$, normalized to have $L^2(u_e)$-unit norm, then (\ref{min4}) implies
\begin{equation}
\int_M h\phi_1^2u_e^{N-2}\;dv_g = \int_M h\phi_2^2u_e^{N-2}\;dv_g
\end{equation}
for every $h\in L^\infty$. This means the functions $\phi_1u_e^{\frac{N-2}{2}}$ and $\phi_2u_e^{\frac{N-2}{2}}$ are linearly dependent. Hence, the dimension of $E_2(u_e)$ is one.  

On the other hand, if we assume that $\nu([g])>1$ and that $u_e$ is maximal for $F_2$, as in Theorem \ref{MainTheorem}, then the argument above does not work: In fact, in Section \ref{Example} we construct an example of a maximal metric for which the second eigenvalue is degenerate. 
\end{remark}

\section{Example: $k>1$}\label{Example}

Let $(H,h)$ be a compact Riemannian manifold of dimension $n \geq 2$ with constant negative scalar curvature $R_H$.  By scaling down $h$ if necessary, we can assume that $(H,h)$  satisfies the following two properties:
\begin{enumerate}[(i)]
\item The scalar curvature
\begin{align} \label{RH}
R_H < - \frac{4n}{n-1},
\end{align}
\item The first non-zero eigenvalue $\lambda_1(-\Delta_h)$ satisfies
\begin{align} \label{Lbig}
\lambda_1(-\Delta_h) > 1.
\end{align}
\end{enumerate}

Let $(M,g) = (H \times S^1(1), h \oplus dt^2)$. Here $S^1=S^1(1)$ denotes the circle of radius $1$.  Then $(M,g)$ is a Riemannian manifold of dimension $m = n+1$ with the following properties:
\begin{enumerate}[(i)]
\item  The scalar curvature $R_g = R_H$ is constant.

\item If $L_g$ denotes the conformal laplacian, then
\begin{align} \label{Lform} \begin{split}
L_g &= -\Delta_g + \frac{(m-2)}{4(m-1)} R_g \\
&= -\Delta_g + \frac{n-1}{4n}R_H.
\end{split}
\end{align}
Consequently, the first eigenvalue of $L_g$ is $\lambda_1(L_g) = \frac{(n-1)}{4n}R_H$, with eigenfunctions given by constants.
\item  The {\em second} eigenvalue of $L_g$ is given by
\begin{align} \label{L2}
\lambda_2(L_g) = \lambda_1(-\Delta_{S^1(1)}) + \frac{(n-1)}{4n}R_H = 1 + \frac{(n-1)}{4n}R_H  < 0,
\end{align}
with eigenfunctions given by the first eigenfunction on the circle factors
\begin{align} \label{trigszed}
\psi_1 = \psi_1(t) = \cos t, \ \ \ \  \psi_2 = \psi_2(t) = \sin t.
\end{align}
In particular, $\lambda_2(L_g)$ has multiplicity two.
\end{enumerate}

\begin{theorem}\label{example}  The product metric $g$ is maximal in its conformal class.  In other words, if $\tilde{g} \in [g]$, then
\begin{align} \label{gmax}
\lambda_2(L_{\tilde{g}}) \text{Vol}(M,\tilde{g})^{\frac{2}{m}} \leq \lambda_2(L_g) \text{Vol}(M,g)^{\frac{2}{m}}.
\end{align}
Moreover, if equality holds then $u$ is constant.

Consequently, $(M,g) = (H \times S^1, h \oplus dt^2)$ is a maximal metric for which the eigenfunctions corresponding to $\lambda_2(g)$ define a harmonic map
\begin{align} \label{S1}
\Psi = (\psi_1, \psi_2) : M \rightarrow S^1,
\end{align}
given by projection onto the $S^1$-factor.
\end{theorem}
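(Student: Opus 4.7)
By the scaling invariance $F_2(cu) = F_2(u)$, I may normalize $\int_M u^N\,dv_g = V(g) := \mathrm{Vol}(M,g)$; the desired inequality then reduces to $\lambda_2(u) \leq \lambda_2(L_g)$, with equality forcing $u \equiv \mathrm{const}$. The strategy is to combine the min-max characterization of $\lambda_2(u)$ with the product structure of $M = H \times S^1$ and with the Yamabe maximality of $g$ in its conformal class (which follows from $g$ being a constant scalar curvature metric of negative Yamabe type).

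First I would take the test subspace $\Sigma_2 = \mathrm{span}(\psi_1,\psi_2) = \mathrm{span}(\cos t, \sin t) \in Gr_2^u(W^{1,2})$, which is the $\lambda_2(L_g)$-eigenspace of $L_g$. Since $L_g \psi_i = \lambda_2(L_g)\psi_i$, the Dirichlet integral simplifies on $\Sigma_2$: for $\phi = a\psi_1 + b\psi_2$,
\[
\int_M \phi\,L_g\phi\,dv_g = \lambda_2(L_g)(a^2+b^2)\,V(g)/2.
\]
Combined with Lemma \ref{Lambda2}, which characterizes $\lambda_2(u)$ via orthogonality to the (constant-sign, simple) first generalized eigenfunction $\psi_1^u$, one obtains
\[
\lambda_2(u) \leq \mathcal{R}^u_g(\phi^*) = \frac{\lambda_2(L_g)\,V(g)/2}{\int_M(\phi^*)^2\,u^{N-2}\,dv_g}
\]
for any $\phi^* \in \Sigma_2$ satisfying $\int_M \phi^*\psi_1^u u^{N-2}dv_g = 0$. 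Using the $S^1$-rotational symmetry of $g$, such a $\phi^*$ may be written in the form $\cos(t - s_0)$ for an angle $s_0$ depending on $u$ and $\psi_1^u$.

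The heart of the proof is the sharp estimate $\int_M (\phi^*)^2\,u^{N-2}\,dv_g \leq V(g)/2$, which together with $\lambda_2(L_g) < 0$ yields $\mathcal{R}^u_g(\phi^*) \leq \lambda_2(L_g)$, hence the theorem. This estimate should follow from two ingredients: (i) the H\"older inequality $\int_M u^{N-2}\,dv_g \leq V(g)$ under our normalization (equivalent to the Yamabe maximality $F_1(u) \leq F_1(g)$), with equality iff $u$ is constant; and (ii) a Hersch-type argument composing the round map $(\psi_1,\psi_2): M \to S^1 \subset \mathbb{R}^2$ with a M\"obius transformation of $S^1$, producing a two-parameter family of admissible test pairs that provides the extra freedom to simultaneously achieve the $L^2(u^{N-2}dv_g)$-orthogonality to $\psi_1^u$ \emph{and} balance the Gram matrix of $(\psi_1, \psi_2)$ in this weighted space so that its larger eigenvalue lies below $V(g)/2$. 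The main obstacle will be coordinating these constraints; the spectral assumptions (\ref{RH}) and (\ref{Lbig}) enter by ensuring $\lambda_2(L_g)$ is isolated from $\lambda_3(L_g)$, so the eigenspace decomposition of $L_g$ is stable under the M\"obius deformation. Equality throughout forces equality in H\"older, hence $u \equiv \mathrm{const}$; the harmonic map statement then follows from Corollary \ref{NodalHarmonic}(2) applied with $k = 2$, since the multiplicity of $\lambda_2(L_g)$ is two.
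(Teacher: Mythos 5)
Your plan has a genuine gap at the step you yourself flag as "the heart of the proof." With the test plane $\mathrm{span}(\psi_1,\psi_2)$, the orthogonality condition to the first generalized eigenfunction $\psi_1^u$ singles out one specific rotation $\phi^*=\cos(t-s_0)$, and you then need $\int_M(\phi^*)^2u^{N-2}\,dv_g\le \tfrac12\mathrm{Vol}(M,g)$. What you actually know is only that the two masses $\int\cos^2(t-s_0)u^{N-2}\,dv_g$ and $\int\sin^2(t-s_0)u^{N-2}\,dv_g$ sum to $\int u^{N-2}\,dv_g\le\mathrm{Vol}(M,g)$ (H\"older), so \emph{one} of the two rotations has mass at most half; but the angle $s_0$ is dictated by $\psi_1^u$, which has nothing to do with how the weight $u^{N-2}$ distributes between the two circle directions, so the rotation forced on you may well be the heavy one. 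The proposed repair by a Hersch-type M\"obius balancing of $S^1$ does not work here: composing $(\psi_1,\psi_2)$ with a non-rotational M\"obius map destroys the eigenfunction property $L_g\phi=\lambda_2(L_g)\phi$, and there is no conformal invariance of $\int\phi L_g\phi\,dv_g$ under reparametrization of the target circle, so the numerator of the Rayleigh quotient is no longer $\lambda_2(L_g)\int\phi^2dv_g$ and the whole computation collapses; moreover the only symmetry preserving the eigenspace is the one-parameter rotation group, which cannot simultaneously enforce orthogonality to $\psi_1^u$ and balance the weighted Gram matrix. (Also, hypotheses (\ref{RH}) and (\ref{Lbig}) are not there to isolate $\lambda_2$ from $\lambda_3$ for stability under deformation; they guarantee $\lambda_2(L_g)<0$ and that the $\lambda_2$-eigenspace is exactly $\mathrm{span}\{\cos t,\sin t\}$.)

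The paper avoids this obstruction by choosing a different two-dimensional test space: after relabeling so that $\int\psi_1^2u^{N-2}dv_g\le\tfrac12\int u^{N-2}dv_g$ (the pigeonhole coming from $\psi_1^2+\psi_2^2=1$), it applies the min-max to $\Sigma_0=\mathrm{span}\{\psi_1,\phi_2\}$, where $\phi_2=w_2/u$ is the \emph{actual} second generalized eigenfunction of the competitor metric. Because both basis vectors satisfy explicit eigen-equations, the Rayleigh quotient on $\Sigma_0$ is an explicit function of one angle $\theta$, and maximizing it (critical point $\tan\theta_c=-\alpha$ with $\alpha=\int\psi_1\phi_2u^{N-2}dv_g$, together with the degenerate cases, one of which uses that $\Sigma_0\subset E_1\oplus E_2$ and a sign argument with the positive first eigenfunction) yields the key inequality $\lambda_2(\tilde g)\int\psi_1^2u^{N-2}dv_g\le\tfrac12\mathrm{Vol}(M,g)\lambda_2(L_g)$; H\"older (your ingredient (i)) then gives (\ref{gmax}), and equality forces equality in H\"older, hence $u$ constant, after which the harmonic map statement follows as you indicate. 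If you want to salvage your approach, the missing idea is precisely this substitution of $\phi_2$ for the second circle eigenfunction in the test plane, which removes the need for any balancing beyond the pigeonhole choice of $\psi_1$.
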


\medskip

\begin{remark}  As noted in the introduction, the argument below can be easily adapted to metrics on products of $H$ with a spheres of any dimension, producing examples
of maximal metrics for which $\lambda_2(L)$ has high multiplicity.
\end{remark}

\medskip

\begin{proof} Let $\tilde{g} = u^{\frac{4}{n-2}}g \in [g]$ with $u \in C^{\infty}(M)$ and $u > 0$.  We want to show
that
\begin{align} \label{goal1}
\lambda_2( \tilde{g}) \mbox{Vol}(\tilde{g})^{\frac{2}{m}} \leq \lambda_2(g) \text{Vol}(M,g)^{\frac{2}{m}}.
\end{align}
If we normalize $u$ so that
\begin{align} \label{bv}
\mbox{Vol}(M,\tilde{g}) = \int_M u^N \, dv_g = 1,
\end{align}
where $N = \frac{2m}{m-2}$, then (\ref{goal1}) is equivalent to
\begin{align} \label{goal2}
\lambda_2( \tilde{g}) \leq \lambda_2(g) \text{Vol}(M,g)^{\frac{2}{m}}.
\end{align}

Let $w_2$ be an eigenfunction associated to $\bl = \lambda_2(\tilde{g})$:
\begin{align*}
L_{\tilde{g}} w_2 = \bl w_2.
\end{align*}
By conformal invariance of $L$, the function
\begin{align} \label{p2}
\phi_2 = \dfrac{w_2}{u}
\end{align}
is an eigenfunction satisfying
\begin{align} \label{Lphi}
L_g \phi_2 = \bl \phi_2 u^{N-2}.
\end{align}
We normalize $\phi_2$ so that
\begin{align} \label{p2norm}
\int_M \phi_2^2 \, u^{N-2} \, dv_g = 1,
\end{align}
hence
\begin{align} \label{Ep}
\int_M \phi_2 \, L_g \phi_2 \, dv_g = \bl.
\end{align}
Likewise, let $\phi_1$ be the generalized eigenfunction associated to $\tf = \lambda_1(\tilde{g})$:
\begin{align} \label{Lphi1}
L_g \phi_1 = \tf  \phi_1 u^{N-2}.
\end{align}
We also normalize $\phi_1$ so that
\begin{align} \label{p1norm}
\int_M \phi_1^2 \, u^{N-2} \, dv_g = 1.
\end{align}
It follows from the strong maximum principle that $\phi_1 > 0$.

Let $t \in [0,2\pi)$ be the coordinate on $S^1$, then
\begin{align} \label{trigs}
\psi_1 = \psi_1(t) = \cos t, \ \ \ \  \psi_2 = \psi_2(t) = \sin t,
\end{align}
are first eigenfunctions for the laplacian on the $S^1$-factor.  Since these are also eigenfunctions for $\lambda_2(g)$, they satisfy
\begin{align} \label{Lpsi}
L_g \psi_i = \lambda_2(g) \psi_i, \ \ \ \ i = 1,2.
\end{align}
We also have the integral formulas
\begin{align} \label{Ints}
\int_M \psi_1^2 \, dv_g = \int_M \psi_2^2 \, dv_g = \text{Vol}(H,h)\cdot \pi = \frac{1}{2} \text{Vol}(M,g).
\end{align}
Since
\begin{align} \label{circle}
\psi_1^2 + \psi_2^2 = 1,
\end{align}
it follows that
\begin{align} \label{bal}
\int_M \left( \psi_1^2 + \psi_2^2 \right) \, u^{N-2} \, dv_g = \int_M u^{N-2} \, dv_g.
\end{align}
Therefore, we may assume (after relabeling if necessary) that $\psi_1$ satisfies
\begin{align} \label{half}
\int_M \psi_1^2  \, u^{N-2} \, dv_g \leq \frac{1}{2} \int_M u^{N-2} \, dv_g.
\end{align}

The following Lemma reduces the proof of the theorem to a key inequality:

\begin{lemma}  Suppose
\begin{align} \label{key}
\bl \leq \dfrac{ \frac{1}{2} \text{Vol}(M,g) \lambda_2(g)}{ \int_M \psi_1^2 \, u^{N-2} \, dv_g }.
\end{align}
Then (\ref{goal2}) holds. Moreover, if equality holds in (\ref{key}) then $u \equiv const.$ and equality holds in (\ref{goal2}).   \end{lemma}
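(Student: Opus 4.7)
The plan is to derive (\ref{goal2}) from (\ref{key}) by controlling the denominator on the right-hand side of (\ref{key}). The first step is to invoke the selection condition (\ref{half}), $\int_M \psi_1^2 u^{N-2}\,dv_g \leq \tfrac{1}{2}\int_M u^{N-2}\,dv_g$, which reduces matters to bounding $\int_M u^{N-2}\,dv_g$ from above. Since $u$ is normalized by $\int_M u^N\,dv_g = 1$ (see (\ref{bv})), H\"older's inequality with exponents $N/(N-2)$ and $N/2$ yields
\begin{equation*}
\int_M u^{N-2}\,dv_g \leq \left(\int_M u^N\,dv_g\right)^{\frac{N-2}{N}} \text{Vol}(M,g)^{\frac{2}{N}} = \text{Vol}(M,g)^{\frac{m-2}{m}},
\end{equation*}
where I have used the identity $2/N = (m-2)/m$. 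Combining with (\ref{half}) gives the key bound
\begin{equation*}
\int_M \psi_1^2 u^{N-2}\,dv_g \leq \tfrac{1}{2}\,\text{Vol}(M,g)^{\frac{m-2}{m}}.
\end{equation*}

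The second step is to substitute this estimate into the denominator of (\ref{key}). The crucial sign observation is that $\lambda_2(g) < 0$ by (\ref{L2}), so the numerator $\tfrac{1}{2}\text{Vol}(M,g)\lambda_2(g)$ is negative; consequently, decreasing the positive denominator can only make the quotient smaller (i.e., more negative). Hence
\begin{equation*}
\frac{\tfrac{1}{2}\text{Vol}(M,g)\lambda_2(g)}{\int_M \psi_1^2 u^{N-2}\,dv_g} \;\leq\; \frac{\tfrac{1}{2}\text{Vol}(M,g)\lambda_2(g)}{\tfrac{1}{2}\,\text{Vol}(M,g)^{\frac{m-2}{m}}} \;=\; \lambda_2(g)\,\text{Vol}(M,g)^{\frac{2}{m}},
\end{equation*}
and chaining with (\ref{key}) produces precisely (\ref{goal2}).

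For the equality case, one tracks where equality holds in the above chain. Equality in (\ref{goal2}) forces equality in the H\"older step, which compels $u^N$ to be proportional to the constant function, i.e.\ $u \equiv \text{const}$. Under the normalization $\int_M u^N\,dv_g = 1$, this means $u \equiv \text{Vol}(M,g)^{-1/N}$, and at such $u$ one verifies directly that (\ref{half}) is also an equality using $\int_M \psi_1^2\,dv_g = \tfrac{1}{2}\text{Vol}(M,g)$ from (\ref{Ints}), so (\ref{key}) is saturated as well.

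I do not expect a substantive obstacle in this lemma: it is a short chain of estimates, provided one is vigilant about the sign of $\lambda_2(g)$ reversing the direction of a H\"older-type bound, and careful with the exponents $N$, $N-2$, $2/m$, $(m-2)/m$. The genuine content of Theorem \ref{example} will instead lie in establishing (\ref{key}) itself, which I anticipate will be done by inserting the test function $\psi_1 - \alpha \phi_1$, for a suitable $\alpha$ chosen to enforce $u^{N-2}$-orthogonality with $\phi_1$, into the Rayleigh quotient characterization (\ref{Char-e}) for $\bl$, and then exploiting (\ref{Lpsi}).
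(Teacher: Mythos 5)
Your proof is correct and is essentially the paper's argument: both deduce (\ref{goal2}) from (\ref{key}) by combining (\ref{half}) with H\"older's inequality under the normalization (\ref{bv}) and using the negativity of $\lambda_2(g)$ and $\bl$ (you bound the denominator $\int_M \psi_1^2 u^{N-2}\,dv_g$ above by $\tfrac{1}{2}\text{Vol}(M,g)^{\frac{m-2}{m}}$ and invoke monotonicity of a negative-numerator quotient, while the paper multiplies out the same chain of inequalities). Your handling of the equality case---tracing equality in (\ref{goal2}) back through the chain to equality in the H\"older step, hence $u\equiv \mathrm{const}$, and then checking via (\ref{Ints}) that (\ref{half}) and (\ref{key}) are saturated---is the same equality-tracing that the paper's one-line remark performs (the lemma's literal phrasing ``equality in (\ref{key}) implies equality in (\ref{goal2})'' is looser than what either argument actually establishes), and it is the direction used at the end of the proof of Theorem \ref{example}.
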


\begin{proof}  If (\ref{key}) holds, then from (\ref{half}) it follows that
\begin{align*}
\frac{1}{2} \text{Vol}(M,g) \lambda_2(g) &\geq \bl \int_M \psi_1^2 \, u^{N-2} \, dv_g \geq \frac{1}{2} \bl \int_M u^{N-2} \, dv_g.
\end{align*}
By H\"older's inequality,
\begin{align} \label{Hold} \begin{split}
\frac{1}{2} \text{Vol}(M,g) \lambda_2(g) &\geq \frac{1}{2} \bl \int_M u^{N-2} \, dv_g \\
&\geq \frac{1}{2} \bl \left( \int_M u^N \, dv_g \right)^{\frac{2}{m}} \left( \int_M \, dv_g \right)^{\frac{m-2}{m}}\\ &= \frac{1}{2} \bl \text{Vol}(M,g)^{\frac{m-2}{m}},
\end{split}
\end{align}
and we conclude
\begin{align*}
\lambda_2(g) \, \text{Vol}(M,g)^{\frac{2}{m}} \geq \bl,
\end{align*}
and we see that (\ref{goal2}) holds.

If equality holds then we must have equality in H\"older's inequality in (\ref{Hold}), and $u \equiv const.$
\end{proof}

To prove (\ref{key}) holds, we will use the min-max characterization of $\lambda_2(L)$:
\begin{align} \label{mmax}
\bl = \inf_{\Sigma^2 \subset W^{1,2}(M,g) } \sup_{w \in \Sigma^2 \setminus \{ 0 \}} \dfrac{ E_g(w)}{\int w^2 u^{N-2} \, dv_g },
\end{align}
where $\Sigma^2 \subset W^{1,2}:=W^{1,2}(M,g)$ denotes a $2$-dimensional subspace of $W^{1,2}$ and
\begin{align} \label{Energy}
E_g(w) = \int_M w L_g w \, dv_g.
\end{align}
In particular, for any two-dimensional subspace $\Sigma^2 \subset W^{1,2}$,
\begin{align} \label{mmax}
\bl \leq \sup_{w \in \Sigma^2 \setminus \{ 0 \}} \dfrac{ E_g(w)}{\int_M w^2 u^{N-2} \, dv_g }.
\end{align}

Let
\begin{align*}
\Sigma^2 = \Sigma_0 = \mbox{span}\{ \psi_1,  \phi_2 \}.
\end{align*}
By homogeneity of the Rayleigh quotient,
\begin{align} \label{Rtrig0}
\sup_{w \in \Sigma_0 \setminus \{ 0 \}} \dfrac{ E_g(w)}{\int_M w^2 u^{N-2} \, dv_g } = \max_{ \theta \in [0,2\pi]} \dfrac{ E_g( \cos \theta \, \psi_1 + \sin \theta \, \phi_2) }{ \int_M ( \cos \theta \, \psi_1 +  \sin \theta \, \phi_2)^2 u^{N-2} \, dv_g },
\end{align}
hence
\begin{align} \label{Rtrig}
\bl \leq \max_{\theta \in [0,2\pi]} \dfrac{ E_g( \cos \theta \, \psi_1 + \sin \theta \, \phi_2) }{ \int_M ( \cos \theta \, \psi_1 + \sin \theta \, \phi_2)^2 u^{N-2} \, dv_g }.
\end{align}

By (\ref{Energy}),
\begin{align} \label{E} \begin{split}
E_g(& \cos \theta \, \psi_1 + \sin \theta \, \phi_2) \\
&= \int_M (\cos \theta \, \psi_1 + \sin \theta \, \phi_2) \, L_g (\cos \theta \, \psi_1 + \sin \theta \, \phi_2) \, dv_g \\
&= \cos^2 \theta \int_M \psi_1 L_g \psi_1 \, dv_g + \sin \theta \cos \theta \int_M \psi_1 \, L_g \phi_2 \, dv_g  \\
& \ \ \ \ +  \sin \theta \cos \theta \int_M \phi_2 \, L_g \psi_1 \, dv_g +  \sin^2 \theta \int_M \phi_2 L_g \phi_2 \, dv_g.
\end{split}
\end{align}
If we define
\begin{align} \label{alpha}
\alpha = \int_M \psi_1 \, \phi_2 \, u^{N-2} \, dv_g,
\end{align}
then since $L$ is self-adjoint,
\begin{align} \label{SA}  \begin{split}
\sin \theta \cos \theta \int_M \psi_1 \, L_g \phi_2 \, dv_g  &+  \sin \theta \cos \theta \int_M \phi_2 \, L_g \psi_1 \, dv_g \\
&= 2 \sin \theta \cos \theta \int_M \psi_1 \, L_g \phi_2 \, dv_g \\
&= 2 \bl \sin \theta \cos \theta  \int_M \psi_1 \, \phi_2 \, u^{N-2} \, dv_g \\
&= 2 \bl \alpha \, \sin \theta \cos \theta.
\end{split}
\end{align}
By (\ref{Ints}),
\begin{align} \label{E1}
\int_M \psi_1 L_g \psi_1 \, dv_g = \lambda_2(g) \int_M \psi_1^2 \, dv_g = \frac{1}{2} \text{Vol}(M,g) \lambda_2(g).
\end{align}
Substituting (\ref{SA}), (\ref{E1}), and (\ref{Ep}) into (\ref{E}), we obtain
\begin{align} \label{E}
E_g( \cos \theta \, \psi_1 + \sin \theta \, \phi_2) = \frac{1}{2} \text{Vol}(M,g) \lambda_2(g) \cos^2 \theta + \bl \Big[ 2 \alpha \sin \theta \cos \theta + \sin^2 \theta\Big].
\end{align}

Turning to the denominator in (\ref{Rtrig}), by the definition of $\alpha$ and the normalization of $\phi_2$,
\begin{align} \label{D1} \begin{split}
\int_M ( \cos \theta \, & \psi_1 +   \sin \theta \, \phi_2)^2 u^{N-2} \, dv_g \\
& = \int_M \big( \cos^2 \theta \, \psi_1^2 + 2  \sin \theta \cos \theta \psi_1 \phi_2 +  \sin^2 \theta \phi_2^2 \big) u^{N-2} \, dv_g \\
&= \cos^2 \theta \int_M \psi_1^2 u^{N-2} \, dv_g + 2  \sin \theta \cos \theta \int_M \psi_1 \phi_2 u^{N-2} \, dv_g +  \sin^2 \theta \int_M \phi_2^2 u^{N-2} \, dv_g \\
&= \cos^2 \theta \int_M \psi_1^2 u^{N-2} \, dv_g + \Big[ 2 \alpha \sin \theta \cos \theta + \sin^2 \theta\Big].
\end{split}
\end{align}
Therefore, the Rayleigh quotient of $w_{\theta} = (\cos \theta) \psi_1 + (\sin \theta) \phi_2$ is
\begin{align} \label{Rw1}
f(\theta) := \mathcal{R}[w_{\theta}] = \dfrac{ \frac{1}{2} \text{Vol}(M,g) \lambda_2(g) \cos^2 \theta + \bl \Big[ 2 \alpha \sin \theta \cos \theta + \sin^2 \theta\Big] }{ \cos^2 \theta \int_M \psi_1^2 u^{N-2} \, dv_g + \Big[ 2 \alpha \sin \theta \cos \theta + \sin^2 \theta\Big]}.
\end{align}
We need to consider two cases:  \medskip

\noindent {\bf Case 1.}  First, suppose $f$ attains its maximum at $\theta_c \in [0,2\pi]$ for which $\cos \theta_c = 0$.  Then
\begin{align} \label{case1}
\max_{ \theta \in [0, 2\pi] } f = \bl.
\end{align}
In this case, we need the following lemma:

\begin{lemma}  Suppose $\Sigma \subset W^{1,2}$ is a two-dimensional subspace such that
\begin{align} \label{sat}
\sup_{w \in \Sigma^2 \setminus \{ 0 \}} \dfrac{ E_g(w)}{\int w^2 u^{N-2} \, dv_g } = \bl.
\end{align}
Then
\begin{align} \label{split}
\Sigma \subset E_1 \oplus E_2,
\end{align}
where $E_1$ and $E_2$ are the spaces of (generalized) eigenfunctions corresponding to $\tf$ and $\bl$ respectively.
\end{lemma}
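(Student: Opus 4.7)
The plan is to expand elements of $\Sigma$ in the spectral basis of the weighted eigenvalue problem for $L_g$ and reduce the claim to a $2 \times 2$ linear algebra statement. Let $\{\phi_k\}_{k \geq 1}$ be a complete $L^2(u)$-orthonormal sequence of generalized eigenfunctions with eigenvalues $\tf = \lambda_1(u) < \bl = \lambda_2(u) \leq \lambda_3(u) \leq \ldots$, and let $M := 1 + \dim E_2$, so that $E_1 \oplus E_2 = \mbox{span}\{\phi_1, \ldots, \phi_M\}$ and $\lambda_k(u) > \bl$ for all $k > M$.

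First I would choose an $L^2(u)$-orthonormal basis $\{w_1, w_2\}$ of $\Sigma$ and expand $w_i = \sum_k c_{ik} \phi_k$. Setting $\pi_k := c_{1k}^2 + c_{2k}^2$, the relation $\sum_k c_{ik} c_{jk} = \delta_{ij}$ makes $C^T C$ (with $C = (c_{ik})$) an orthogonal projection of rank $2$, so $\pi_k \in [0,1]$ and $\sum_k \pi_k = 2$; the desired conclusion $\Sigma \subset E_1 \oplus E_2$ is equivalent to showing $\pi_k = 0$ for every $k > M$. The hypothesis translates into the statement that the largest eigenvalue of the symmetric matrix $A_{ij} := E_g(w_i, w_j) = \sum_k \lambda_k(u) c_{ik} c_{jk}$ equals $\bl$, while the Courant min-max bound gives that the smaller eigenvalue $\mu_1 := \inf_{w\in\Sigma \setminus \{0\}} \mathcal{R}^u_g(w)$ satisfies $\mu_1 \geq \tf$.

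The computational heart of the proof is the trace identity $\mu_1 + \bl = \mathrm{tr}(A) = \sum_k \pi_k \lambda_k(u)$, which I would combine with the sharper information $\mu_1 = \tf$. Granted this equality, the identity rearranges as $\sum_k \pi_k(\lambda_k(u) - \tf) = \bl - \tf$; splitting the sum according to $k = 1$, $2 \leq k \leq M$ (where $\lambda_k(u) = \bl$), and $k > M$ (where $\lambda_k(u) > \bl$), and using $\pi_1 \leq 1$ together with $\lambda_{M+1}(u) > \bl$, a short bookkeeping computation forces $\sum_{k > M} \pi_k = 0$, yielding $\pi_k = 0$ for $k > M$ as required.

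The main obstacle I anticipate is precisely establishing $\mu_1 = \tf$, since the bare trace and positivity bounds would otherwise allow subspaces that tilt slightly out of $E_1 \oplus E_2$ in the direction of a higher eigenfunction $\phi_{k}$ ($k > M$), compensated by a tilt in the direction of $\phi_1$. To rule this out I would exploit that the supremum $\bl$ is attained at some $v \in \Sigma$: the Lagrange multiplier condition for this constrained maximizer gives $\int \phi (L_g v - \bl v u^{N-2}) \, dv_g = 0$ for every $\phi \in \Sigma$. Choosing the orthonormal basis $\{w_1, w_2\}$ so that $v = w_1$ and $w_2$ is $L^2(u)$-orthogonal to $v$, and combining with the one-dimensionality of $E_1$ furnished by Lemma \ref{simple}, should pin down the spectral structure of $w_2$ enough to force $\mu_1 = \tf$. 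Should this direct attack not close, a perturbative argument deforming $\Sigma$ to nearby $2$-planes $\Sigma_t$ and differentiating the min-max at level $2$ is the natural fallback.
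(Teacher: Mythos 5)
Your reduction is clean as far as it goes: with $\{w_1,w_2\}$ an $L^2(u)$-orthonormal basis of $\Sigma$, the matrix $A_{ij}=E_g(w_i,w_j)$ has top eigenvalue $\bl$ by (\ref{sat}), bottom eigenvalue $\mu_1\ge\tf$ by min--max, and your trace identity together with $\pi_1\le 1$ does force $\pi_k=0$ for $k>M$ \emph{provided} $\mu_1=\tf$. But that proviso is exactly where the argument stops, and it cannot be repaired along the lines you suggest, because $\mu_1=\tf$ does not follow from (\ref{sat}). Take $\phi_1\in E_1$, $\phi_2\in E_2$, and any generalized eigenfunction $\phi_j$ with eigenvalue $\lambda_j>\bl$, all $L^2(u)$-orthonormal, and set $\Sigma_\epsilon=\mathrm{span}\{\phi_1+\epsilon\phi_j,\ \phi_2\}$. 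The two spanning vectors are orthogonal both in $L^2(u)$ and in energy, so on $\Sigma_\epsilon$ the Rayleigh quotient is a convex combination of $a_\epsilon:=(\tf+\epsilon^2\lambda_j)/(1+\epsilon^2)$ and $\bl$; for $\epsilon$ small one has $a_\epsilon<\bl$, hence $\sup_{\Sigma_\epsilon}\mathcal{R}^u_g=\bl$ and (\ref{sat}) holds, while $\mu_1=a_\epsilon>\tf$. So the ``tilted'' subspaces you worried about genuinely satisfy the hypothesis, and neither the Lagrange-multiplier condition at the maximizer (which only shows the maximizer itself lies in $E_2$) nor a perturbation of the min--max can yield $\mu_1=\tf$.

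For comparison, the paper's proof makes no claim about the infimum over $\Sigma$: since $\dim E_1=1$ (Lemma \ref{simple}), there is a nonzero $w_0\in\Sigma\cap E_1^{\perp}$ (orthogonality in $L^2(u)$); by the characterization of $\lambda_2$ in Lemma \ref{Lambda2}, $\mathcal{R}^u_g(w_0)\ge\bl$, while (\ref{sat}) gives the reverse inequality, whence $w_0\in E_2$. Note, however, that this only yields $\Sigma\cap E_1^{\perp}\subset E_2$, and the example $\Sigma_\epsilon$ above (for which $\Sigma_\epsilon\cap E_1^{\perp}=\mathrm{span}\{\phi_2\}$) shows that (\ref{sat}) alone cannot force the full conclusion (\ref{split}). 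Thus any complete argument---yours or a strengthened version of the paper's---must bring in information beyond (\ref{sat}); in the intended application one should exploit the specific subspace $\Sigma_0=\mathrm{span}\{\psi_1,\phi_2\}$, in particular that $\psi_1$ is an honest eigenfunction of $L_g$ for the background product metric, rather than the abstract statement of the lemma.
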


\begin{proof}  For classical eigenvalues this result is a simple consequence of the minimax principle.  As above let $\phi_1 \in E_1$ be a generalized eigenfunction associated to $\tf$, and
\begin{align*}
E_1^{\bot} = \{ v \in W^{1,2} \, : \, \int_M v \, \phi_1 \, u^{N-2} \, dv_g = 0 \}.
\end{align*}
Let $w_0 \in \Sigma \cap E_1^{\bot}$ with $w \neq 0$.  Such a $w_0$ must exist; otherwise, $\Sigma \subset E_1$, which contradicts the fact that $\Sigma$ is two-dimensional.  By the variational characterization of $\bl$,
\begin{align*}
\dfrac{ E_g(w_0)}{\int w_0^2 u^{N-2} \, dv_g } \geq \bl,
\end{align*}
with equality if and only if $w_0 \in E_2$.  However, by assumption,
\begin{align*}
\bl \geq \dfrac{ E_g(w_0)}{\int w_0^2 u^{N-2} \, dv_g }.
\end{align*}
It follows that $w_0 \in E_2$, and (\ref{split}) must hold.
\end{proof}

Applying the lemma to $\Sigma_0 = \{ \psi_1, \phi_2 \}$, we conclude that
\begin{align} \label{sub1}
\psi_1 = c_1 \phi_1 + c_2 \phi_2,
\end{align}
where $c_1, c_2$ are constants.  We claim that $c_1 = 0$.  If not, then since $\phi_1 > 0$, there cannot be a point $x_0 \in M$ at which
\begin{align*}
\psi_1(x_0) = \phi_2(x_0) = 0.
\end{align*}
Applying $L_g$ to both sides of (\ref{sub1}) and using the fact that $\phi_1$ and $\phi_2$ are generalized eigenfunctions for $\tf$ and $\bl$ respectively, we get
\begin{align} \label{Lk} \begin{split}
L_g \psi_1 &= c_1 L_g \phi_1 + c_2 L_g \phi_2 \\
&= c_1 \tf \phi_1 u^{N-2} + c_2 \bl \phi_2 u^{N-2} \\
&= \tf \left( \psi_1 - c_2 \phi_2 \right)u^{N-2} + c_2 \bl \phi_2 u^{N-2} \\
&= \tf \psi_1 u^{N-2} + c_2 \left( \bl - \tf \right) \phi_2 u^{N-2}.
\end{split}
\end{align}
Since $\psi_1$ is an eigenfunction for $\lambda_2(g)$, this implies
\begin{align} \label{Lk2}
\lambda_2(g) \psi_1 = \tf \psi_1 u^{N-2} + c_2 \left( \bl - \tf \right) \phi_2 u^{N-2}.
\end{align}
Let $x_0$ be a point at which $\psi_1(x_0) = 0$, then $\phi_2(x_0) \neq 0$.  But by (\ref{Lk2}),
\begin{align}
0 = c_2 \big( \bl - \tf \big) \phi_2(x_0) u(x_0)^{N-2},
\end{align}
hence $c_2 = 0$.  However, this would imply $\psi_1 = c_1 \phi_1$.  Since $\phi_1 > 0$ but $\psi_1$ obviously changes sign, we get a contradiction.  Therefore, $c_1 = 0$.

Since $c_1 = 0$, we have
\begin{align*}
\psi_1 = c_2 \phi_2.
\end{align*}
From this it immediately follows that
\begin{align*}
\frac{1}{2} \text{Vol}(M,g) \lambda_2(g) &= E_g(\psi_1) = c_2^2 E_g(\phi_2) = c_2^2 \bl,
\int_M \psi_1^2 \, u^{N-2} \, dv_g = c_2^2,
\end{align*}
hence
\begin{align*}
\dfrac{ \frac{1}{2} \text{Vol}(M,g) \lambda_2(g)}{ \int_M \psi_1^2 \, u^{N-2} \, dv_g } = \bl,
\end{align*}
so equality holds in (\ref{key}).  In particular, in this case equality holds in (\ref{goal2}).  \medskip

\noindent {\bf Case 2.}  The final case to consider is when the maximum of $f$ occurs at some $\theta \in [0,2\pi]$ for which $\cos \theta \neq 0$.  If $\cos \theta \neq 0$, we can rewrite $f$ as
\begin{align} \label{fdef}
f(\theta) = \dfrac{ \frac{1}{2} \text{Vol}(M,g) \lambda_2(g)  + \bl \left[ 2 \alpha \tan \theta + \tan^2 \theta \right] }{ \int_M \psi_1^2 u^{N-2} \, dv_g + \left[ 2 \alpha \tan \theta + \tan^2 \theta \right]},
\end{align}
with domain $(-\frac{\pi}{2}, \frac{\pi}{2})$.  If we take the derivative of $f$, we see that $\theta_c \in (-\frac{\pi}{2},\frac{\pi}{2})$ is a critical point of $f$ if and only if
\begin{align*}
2 \left( \bl \int_M \psi_1^2 \, u^{N-2} \, dv_g - \frac{1}{2} \text{Vol}(M,g) \lambda_2(g) \right) \left(\alpha + \tan \theta_c \right) \sec^2 \theta_c = 0.
\end{align*}
Therefore, we have two possibilities at the point where $f$ attains its maximum. First, if
\begin{align*}
\left( \bl \int_M \psi_1^2 \, u^{N-2} \, dv_g - \frac{1}{2} \text{Vol}(M,g) \lambda_2(g) \right) = 0,
\end{align*}
then equality holds in (\ref{key}) and we are done.  The second possibility is that $\tan \theta_c = -\alpha$.  Plugging this into the formula for $f$, we find
\begin{align} \label{pen}
\bl \leq \max f = \dfrac{ \frac{1}{2} \text{Vol}(M,g) \lambda_2(g) - \bl \alpha^2 }{ \int_M \psi_1^2 \, u^{N-2} \, dv_g - \alpha^2}.
\end{align}
Clearing the denominator gives
\begin{align} \label{pen2}
\bl \int_M \psi_1^2 \, u^{N-2} \, dv_g - \alpha^2 \bl \leq  \frac{1}{2} \text{Vol}(M,g) \lambda_2(g) - \bl \alpha^2,
\end{align}
hence
\begin{align} \label{ult}
\bl \int_M \psi_1^2 \, u^{N-2} \, dv_g \leq  \frac{1}{2} \text{Vol}(M,g) \lambda_2(g),
\end{align}
and once again (\ref{key}) holds.

Now suppose equality holds in (\ref{goal2}).  Then we have equality in (\ref{ult}), and therefore equality in (\ref{key}).  It follows that $u$ is constant.
\end{proof}


\end{document}